\documentclass[10pt]{article}%

\usepackage[utf8]{inputenc}

\usepackage{amsmath}
\usepackage{amsfonts}
 \usepackage{mathrsfs}
\usepackage{amssymb, color}
\usepackage[linkcolor=black,anchorcolor=black,citecolor=black]{hyperref}
\usepackage[numbers,sort&compress]{natbib}
\usepackage{graphicx}
\numberwithin{equation}{section}
\usepackage[body={15.5cm,21cm}, top=3cm]{geometry}%
\providecommand{\U}[1]{\protect\rule{.1in}{.1in}}
\providecommand{\U}[1]{\protect \rule{.1in}{.1in}}
\newtheorem{theorem}{Theorem}[section]

\newtheorem{definition}[theorem]{Definition}
\newtheorem{example}[theorem]{Example}

\newtheorem{lemma}[theorem]{Lemma}

\newtheorem{proposition}[theorem]{Proposition}
\newtheorem{remark}[theorem]{Remark}

\newtheorem{assumption}[theorem]{Assumption}
\newenvironment{proof}[1][Proof]{\noindent \textbf{#1.} }{\  \rule{0.5em}{0.5em}}

\def \E{\mathbf{E}}

\def \P{\mathbf{P}}

\usepackage [numbers,sort&compress] {natbib}

\begin{document}
	\title{Mean-Field Backward Stochastic Differential Equations with Nonlinear Resistance and Double Mean Reflections}
	\author{ 	Hanwu Li \thanks{Research Center for Mathematics and Interdisciplinary Sciences, Shandong University, Qingdao 266237, Shandong, China. lihanwu@sdu.edu.cn.}
	\thanks{Frontiers Science Center for Nonlinear Expectations (Ministry of Education), Shandong University, Qingdao 266237, Shandong, China.}
    \thanks{Shandong Province Key Laboratory of Financial Risk, Shandong University, Qingdao 266237, Shandong, China.}
    \and Jin Shi \thanks{Research Center for Mathematics and Interdisciplinary Sciences, Shandong University, Qingdao 266237, Shandong, China. }}
	\date{}
	\maketitle
	
	\begin{abstract}
      In this paper, we investigate mean-field backward stochastic differential equation ($\text{MFBSDE}$) with double mean reflections and nonlinear resistance. Specifically, the constraints are formulated in terms of the expectation of the solution, and a compensating term is incorporated into the generator.  We establish the existence and uniqueness for both the case of Lipschitz generator and the case where the generator is quadratic and the terminal value is bounded. Finally, when the compensating term is absolutely continuous, we study the well-posedness of a variant type of doubly mean reflected MFBSDE with nonlinear resistance, whose generator depends on the density function of the compensating term.
	\end{abstract}

    \textbf{Key words}: Backward stochastic differential equations, Double mean reflections, Nonlinear resistance, Skorokhod problem, Density function

    \textbf{MSC-classification}: 60H10
	
\section{Introduction}
Pardoux and Peng \cite{PP} first pioneered the study of backward stochastic differential equations (BSDEs), which take the following form:
\begin{equation}
 Y_t=\xi+\int_t^T f(s,Y_s,Z_s)ds-\int_t^T Z_s dB_s,\quad \forall t\in[0,T].   
\end{equation}
They established the existence and uniqueness of solutions for BSDEs under the assumptions that the generator $f$ is uniformly Lipschitz continuous and the terminal value $\xi$ is square integrable.  Over the past decades, BSDE theory has evolved into several major branches to accommodate diverse modeling requirements, including BSDEs with jumps, forward-backward SDEs (FBSDEs), and constrained BSDEs. Among these, to tackle optimal stopping problems in mathematical finance and provide probabilistic representations for partial differential equations (PDEs) with obstacles constraints,  El Karoui, Kapoudjian, Pardoux, Peng and Quenez \cite{EKPPQ} further proposed the notion of reflected backward stochastic differential equations (RBSDEs). 
 By incorporating a non-decreasing reflection process $K$ that forces the solution $Y$ to stay above a given stochastic barrier (obstacle) $L$, i.e. 
 \begin{align}\label{constraint pointwise}
   Y_t\geq L_t, \ \forall t\in[0,T],  
 \end{align}
 the primary aim is to identify the minimal solution $Y$, which is uniquely determined by the Skorokhod condition $\int_{0}^{T} (Y_t-L_t) dK_t=0$. In contrast to pointwise constraints in classical RBSDEs, Briand, Elie, and Hu  \cite{BEH} introduced $\text{BSDEs}$ with mean reflection, whose constraints are articulated in terms of the distribution of the solution. Specifically, given a loss function $l$, the mean reflection constraint is formulated as 
 \begin{align}\label{constraint expectation}
 \mathbf{E}[l(t,Y_t)]\geq 0, \ \forall t\in[0,T].
 \end{align}
 Unlike the classical reflected case, the force $K$ is required to be a deterministic function that adheres to the Skorokhod condition$\int_{0}^{T}\mathbf{E}[l(t,Y_t)]dK_t=0$. The authors proved the existence and uniqueness of the so-called deterministic flat solution for mean reflected $\text{BSDEs}$ and further applied the results to  super-hedging problems within the context of ongoing risk management. More recently, Li \cite{L} investigated a more general reflection constraint. For given two nonlinear loss functions $L,R$  with $L\leq R$, the first component $Y$  satisfies the following condition $$\mathbf{E}[L(t,Y_t)]\leq 0\leq \mathbf{E}[R(t,Y_t)], \ \forall t\in[0,T],$$ 
and the minimality conditions for the bounded variation function $K$ are defined accordingly.  For more relevant advances in mean reflected BSDEs, {we refer the readers to \cite{BH,FS,HHLLW,LW,NQW,QF}.} 

Motivated by the demand for modeling complex systems in which individual dynamics depend on the collective behavior of the population, 
the study of mean-field backward stochastic differential equations (MFBSDEs) was pioneered by Buckdahn, Djehiche, Li and Peng \cite{BDL} in their seminal work, which incorporates the distribution of solution to the generators $f$ for BSDEs. This framework was developed in the context of analyzing the limiting behavior of a high-dimensional system of forward-backward BSDEs. Subsequently, Buckdahn, Li and Peng \cite{BLP} further investigated $\text{MFBSDEs}$ within a more general framework. They established key results including the existence and uniqueness of solution, the comparison theorem and the stochastic representation for a nonlocal $\text{PDE}$. {Li \cite{Li'} firstly introduced reflected MFBSDEs, where the constraint is written as in \eqref{constraint pointwise}. Recently, Hu, Moreau and Wang \cite{HMW}  investigated the solvability of MFBSDEs with mean reflection, whose constraint is given in terms of the expectation of the solution as in \eqref{constraint expectation}. Correspondingly,  Li and Shi \cite{LS} extended the MFBSDEs to the case of double mean reflections.} 
For further insights into the reflected MFBSDEs, {we refer to works \cite{CHM,DD,DDZ,DEH,HMW',HMW} and the references therein.} 

{Inspired by Skorokhod equation-based methods for obstacle problems, and the practical demands for financial pricing and risk hedging under wealth constraints,}  Qian and Xu \cite{QX}  first extended the classical framework of RBSDEs by introducing a novel nonlinear resistance mechanism, where the reflecting force $K$ not only acts directly but also feeds back into the system's generator $f$  through a nonlinear mapping $H(K)$. This introduces a resistance effect, where the force $K$ influences the solution's dynamics in a more complex, nonlinear manner. 
{The authors  established a pathwise construction of solutions via Skorokhod’s equation, in contrast to conventional penalization methods. They also applied the developed theory to financial problems, including super-hedging under wealth constraints and optimal stopping with recursive utilities. The RBSDEs with resistance share some similarities with a variant of Skorokhod's obstacle problem introduced in \cite{BEK}, which was further studied by Ma and Wang \cite{MW}.} {Subsequently, Luo \cite{LP}  investigated the well-posedness of mean-reflected MFBSDEs with nonlinear resistance. The author addressed both Lipschitz and quadratic generator conditions and presented an application to superhedging problems subject to risk constraints.} 

The present paper is dedicated to the study of the following $\text{MFBSDE}$ with double mean reflections and nonlinear resistance over the time interval $[0,T]$ 
\begin{equation}\label{MFBSDEDMR}
\begin{cases}
Y_t=\xi+\int_t^T f(s,Y_s,\mathbf{P}_{Y_s},Z_s,\mathbf{P}_{Z_s},G_s(K))ds-\int_t^T Z_s dB_s+K_T-K_t, \\
\mathbf{E}[L(t,Y_t)]\leq 0\leq \mathbf{E}[R(t,Y_t)], \\
K_t=K^R_t-K^L_t,\ K^R,K^L\in I[0,T], \\
\int_0^T \mathbf{E}[R(t,Y_t)]dK_t^R=\int_0^T \mathbf{E}[L(t,Y_t)]dK^L_t=0,
\end{cases}
\end{equation}
where $G_s(\cdot)$ is a functional of $K$, which depends on the path of $K$ during time period $[0,s]$. To some extent, this feature makes the equation regarded as a forward-backward equation. This leads to the difficulty of constructing the solution by stitching the local ones on each small time obtained by the contraction mapping method. Motivated by \cite{LP}, we investigate the well-posedness of Eq. \eqref{MFBSDEDMR} using a so-called two-step method. More precisely, for both the Lipschitz case and the quadratic growth case, the key difference between the two cases lies in the time interval: in the Lipschitz case, the contraction mapping holds on a general fixed-length time interval, whereas in the quadratic growth case, the mapping is contractive only on a small time interval whose length depends on the bound of the component $Y$. Therefore, for the quadratic growth case, we first establish a uniform bound for the component $Y$ under an additional assumption on the generator $f$. Using this bound, we obtain the local existence and uniqueness of the solution for Eq. \eqref{MFBSDEDMR}. Furthermore, we have the global well-posedness of Eq. \eqref{MFBSDEDMR} under the given resistance term.
In contrast, for the Lipschitz case, the well-posedness of MFBSDE with double mean reflections (for the given resistance term) is already known, and we refer the reader to \cite{LS} for details. Finally, when the generator includes the nonlinear resistance term, we apply the contraction mapping principle to establish the well-posedness of the solution for Eq. \eqref{MFBSDEDMR} under both cases.

Lastly, we consider doubly mean reflected MFBSDEs with a density function, {where $G_{\cdot}(K)$ in Eq. \eqref{MFBSDEDMR} denotes the Radon-Nikodym derivative of 
$K$ with respect to the Lebesgue measure.  In fact, the targeted solution can be constructed based on the solution to doubly mean-reflected MFBSDEs without a resistance term. To this end, the first problem we aim to address is to identify the conditions ensuring the absolute continuity of $K$, where $(Y,Z,K)$ stands for the solution to the underlying doubly mean reflected MFBSDE. Roughly speaking, the desired result holds provided that the loss functions $L$ and $R$  are sufficiently smooth and $Z$ satisfies certain enhanced integrability conditions. Moreover, by virtue of Malliavin  calculus, we derive a sufficient condition that guarantees the integrability of $Z$. Further, under a specific structural assumption on the generator (see Eq. \eqref{k+k-}), we construct the solution to the density-dependent doubly mean reflected MFBSDE. In particular, if the loss functions are linear and the generator is separable with respect to the variable $k$ (see Eq. \eqref{repre of f}), the corresponding solution is  unique.
} 
 
This paper is organized as follows. In Section 2, we recall some basic results about the Skorokhod problem and a priori estimates for MFBSDE. In Section 3,  we study the existence and uniqueness of doubly mean reflected MFBSDEs with nonlinear resistance when the generators are Lipschitz continuous. Then, we consider the case of quadratic generators and bounded terminal values in Section 4. Finally, the well-posedness of the doubly mean reflected MFBSDEs with density functions is investigated in Section 5. 
 
\section{Preliminaries}

 Fix a finite time horizon $T>0$. Consider a filtered probability space $(\Omega,\mathcal{F},\mathbb{F},\P)$ satisfying the usual conditions of right continuity and completeness. Let $B$ be a $1$-dimensional standard Brownian motion. Actually, all the results in this paper still hold when we consider the multi-dimensional Brownian motion. For each $p>1$ and for any finite time interval $[u,v]\subset [0,T]$, we first introduce the following notations.
 
\begin{itemize}	
\item $\mathcal{L}^p_v$: the collection of real-valued $\mathcal{F}_{v}$-measurable random variable $\xi$ satisfying 
	$$\left \| \xi \right \|_{\mathcal{L}^p}=\mathbf{E} \left [ \left | \xi  \right | ^p \right ]^{\frac{1}{p} }< \infty. $$
	 
	\item $\mathcal{L}^{\infty}_v$: the collection of real-valued $\mathcal{F}_{v}$-measurable random variable $\xi$ satisfying
	 $$\left \| \xi  \right \|_{\mathcal{L}^\infty}=\operatorname{ess\,sup}\left |\mathbf{\xi} \left ( \omega  \right )   \right |<\infty. $$
	
	\item $\mathcal{H}^{p}\left[u,v\right]$: the collection of real-valued $\mathcal{F}$-progressively measurable processes $\left(z_{t}\right)_{u\leq t\leq v}$ satisfying 
	$$\left \| z \right \|_{\mathcal{H}^p }=\mathbf{E}\left [\left ( \int_{u}^{v}\left | z_{t} \right |^2 d t \right )^{\frac{p}{2}}   \right ]^{\frac{1}{p} }<\infty. $$
	
	\item $S^p\left[u,v\right]$: the collection of real-valued $\mathcal{F}$-adapted continuous processes $\left(y_{t}\right)_{u\leq t \leq v}$ satisfying 
	$$ \left \| y \right \|_{\mathcal{S}^p }=\mathbf{E}\left [ \sup _{t\in[u,v]} \left | y_{t} \right |^p \right ]^{\frac{1}{p}}  <\infty.$$
	
	\item $S^{\infty}\left[u,v\right]$: the collection of real-valued $\mathcal{F}$-adapted continuous processes $\left(y_{t}\right)_{u\leq t \leq v}$ satisfying
	$$\left \| y \right \|_{\mathcal{S}^{\infty } } =\operatorname{ess\,sup}_{(t,\omega)\in\left[u,v\right]\times\Omega}\left | y_t\left (\omega\right )  \right |  <\infty.$$
	
	\item $\mathcal{P}_{p}\left(\mathbb{R}\right)$: the collection of all probability measures over $\left(\mathbb{R},\mathcal{B}\left(\mathbb{R}\right)\right)$ with finite $p$-th moment, endowed with the $p$-Wasserstein distance $W_{p}$, where $p > 1$ is a positive integer and $$W_{p}(\mu,\nu)=\left(\inf_{X \sim \mu,Y \sim\nu}\mathbf{E}[\left|X-Y\right|]^p\right)^{\frac{1}{p}}.$$
	
	
	
	
	\item $\mathcal{T}_{t}[u,v]$: the collection of $\left[u,v\right]$-valued $\mathcal{F}$-stopping times $\tau$ such that $\tau \geq t$, $\mathbf{P}$-a.s. 
	
	\item $BMO\left[u,v\right]$: the collection of real-valued progressively measurable processes $\left(z_{t}\right)_{u\leq t \leq v}$ such that 
	$$\left \| z \right \|_{BMO}:=\sup_{\tau\in \mathcal{T}_{u}\left[u,v\right]}\operatorname{ess\,sup}_{\omega\in\Omega}\mathbf{E}_{\tau}\left[\int_{\tau}^{v}\left | z_{s} \right |^2 d s \right]^{\frac{1}{2}}<\infty. $$

	\item $C\left[u,v\right]$: the set of continuous functions from $\left[u,v\right]$ to $\mathbb{R}$.

	\item  $BV\left[u,v\right]$: the set of functions $K\in C\left[u,v\right]$ with $K_u=0$ and $K$ is of bounded variation on $\left[u,v\right]$.
	
	\item  $I\left[u,v\right]$: the set of functions in $C\left[u,v\right]$ starting from the origin which is nondecreasing.
 \end{itemize}
	
When the interval $[u,v]=[0,T]$, we always omit the time index. 
 For example, we write $\mathcal{H}^{p},\mathcal{S}^p,\mathcal{S}^{\infty}$ and so on. 

\subsection{Skorokhod problem}
In this section, we present the Skorokhod problem with nonlinear constraints, systematically addressing its definition, well-posedness, and essential properties of the resulting solutions. 
\begin{definition}\label{def1}
Let $s\in C[0,T]$,  and $l,r:[0,T] \times \mathbb{R}\rightarrow \mathbb{R}$ be two functions with $l\leq r$.  A pair of functions $(x,K)\in C[0,T]\times BV[0,T]$ is called a solution to the  Skorokhod problem for $s$ with nonlinear constraints $l,r$ ($(x,K)=\mathbb{SP}_l^r(s)$ for short) if 
\begin{itemize}
\item[(i)] $x_t=s_t+K_t$;
\item[(ii)] $l(t,x_t)\leq 0\leq r(t,x_t)$;
\item[(iii)]  $K_{0-}=0$ and $K$ has the decomposition $K=K^r-K^l$, where $K^r,K^l$ are nondecreasing functions satisfying  
\begin{align}
\int_0^{T} I_{\{l(s,x_s)<0\}}dK^l_s=0, \  \int_0^{T} I_{\{r(s,x_s)>0\}}dK^r_s=0.
\end{align}
\end{itemize}
\end{definition}

We propose the following assumption on the functions $l,r$.
\begin{assumption}\label{asslr}
The functions $l,r:[0,T] \times \mathbb{R}\rightarrow \mathbb{R}$ satisfy the following conditions
\begin{itemize}
\item[(i)] For each fixed $x\in\mathbb{R}$, $l(\cdot,x),r(\cdot,x)\in C[0,T]$;
\item[(ii)] For any fixed $t\geq 0$, $l(t,\cdot)$, $r(t,\cdot)$ are strictly increasing;
\item[(iii)] There exist two positive constants $0<c<C<\infty$, such that for any $t\in [0,T]$ and $x,y\in \mathbb{R}$,
\begin{align*}
&c|x-y|\leq |l(t,x)-l(t,y)|\leq C|x-y|,\\
&c|x-y|\leq |r(t,x)-r(t,y)|\leq C|x-y|.
\end{align*} 
\item[(iv)] $\inf_{(t,x)\in[0,T]\times\mathbb{R}}(r(t,x)-l(t,x))>0$.
\end{itemize}
\end{assumption}

\begin{theorem}[\cite{Li}]\label{SP}
Suppose that $l,r$ satisfy Assumption \ref{asslr}. For any given $s\in C[0,T]$, there exists a unique pair of solution to the Skorokhod problem $(x,K)=\mathbb{SP}_l^r(s)$. Moreover, for any $t\geq 0$, let $\phi_t,\psi_t$ be the unique solutions to the following equations, respectively, 
    \begin{align*}
        l(t,s_t+x)=0,\ r(t,s_t+x)=0.
    \end{align*}
    Then, we have 
    \begin{align*}
        &K_t=-\max\left((-\phi_0)^{+}\wedge  \inf_{r\in[0,T]}(-\psi_r), \sup_{s\in[0,T]}\left[(-\phi_s) \wedge  \inf_{r\in[s,t]}(-\psi_r)\right]\right),\\
    \end{align*}
    and 
   \begin{align*}
        &\sup_{t\in[0,T]}\left|K_t\right|\leq \sup_{t\in[0,T]}\left|\phi_t\right|+\sup_{t\in[0,T]}\left|\psi_t\right|.\\
    \end{align*} 
\end{theorem}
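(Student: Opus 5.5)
The idea is to reduce the nonlinear Skorokhod problem to a classical two-sided Skorokhod problem with time-dependent barriers on the line, and then use the explicit solution of that problem.

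\emph{Step 1: the barriers $\phi,\psi$.} By Assumption \ref{asslr}(ii)--(iii), for each $t$ the maps $x\mapsto l(t,s_t+x)$ and $x\mapsto r(t,s_t+x)$ are strictly increasing and bi-Lipschitz onto $\mathbb{R}$. Hence $\phi_t$ and $\psi_t$ exist and are unique. They are continuous in $t$: from the lower Lipschitz bound, $c|\phi_t-\phi_u|\le |l(t,s_t+\phi_u)-l(u,s_u+\phi_u)|$, and this tends to $0$ by continuity of $l(\cdot,x)$, local uniformity in $x$ coming from the Lipschitz bound, and continuity of $s$. Since $l\le r$ and $r-l\ge\delta>0$, monotonicity gives $\psi_t<\phi_t$, with a gap bounded below uniformly in $t$ (the gap is $\ge \delta/C$). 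Moreover, by monotonicity, $l(t,x_t)\le0\le r(t,x_t)$ with $x_t=s_t+K_t$ is equivalent to $\psi_t\le K_t\le\phi_t$. The sign sets also translate: $\{l(t,x_t)<0\}=\{K_t<\phi_t\}$ and $\{r(t,x_t)>0\}=\{K_t>\psi_t\}$.

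\emph{Step 2: reformulation and explicit solution.} Definition \ref{def1} is therefore equivalent to the following problem. Find a continuous bounded-variation $K=K^r-K^l$ with $\psi\le K\le\phi$, where $K^r$ increases only when $K=\psi$ and $K^l$ increases only when $K=\phi$. Setting $\bar K=-K$, this is the classical two-sided Skorokhod problem for the zero input with moving barriers $-\phi<-\psi$, separated uniformly. Existence, uniqueness and the explicit formula for such problems are known (Burdzy--Kang--Ramanan / the paper \cite{Li}).

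For existence, I would define $K$ by the stated max--inf formula and verify three things: continuity, the constraint $\psi\le K\le\phi$, and the minimality conditions. The verification proceeds by time-splitting. Using the uniform gap, one alternates stopping times at which $K$ reaches $\phi$ or $\psi$; on each such interval $K$ is a running sup or inf of a barrier, hence monotone. The number of switches is finite because the barriers are uniformly continuous and separated, so $K$ has bounded variation. The decomposition $K^r,K^l$ is read off from these monotone pieces.

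For uniqueness, take two solutions $K,K'$ and consider $\frac{d}{dt}(K-K')^2$ in the Stieltjes sense. The contribution $(K-K')\,d(K^r-K'^r)$ is $\le 0$, because $K^r$ increases only where $K=\psi\le K'$, and symmetrically for the other terms. The $l$-terms behave the same way, so $(K-K')^2\le0$.

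\emph{Step 3: the bound.} From the formula, $K_t$ is a max/min of quantities built from $-\phi$, $-\psi$ and $0$, so $|K_t|\le\sup|\phi|+\sup|\psi|$. More simply, $\psi_t\le K_t\le\phi_t$ directly gives $|K_t|\le\sup|\phi|\vee\sup|\psi|$, which is stronger.

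\emph{Main obstacle.} I expect the hardest part to be checking that the explicit formula satisfies the complementarity (minimality) conditions and has bounded variation. The uniform gap from Assumption \ref{asslr}(iv) is essential here. If this becomes messy, I would instead construct $K$ piecewise via the stopping-time alternation and only afterwards identify it with the formula using uniqueness.
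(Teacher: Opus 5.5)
Your proof is correct and follows the route behind this result; the paper gives no proof and only cites \cite{Li}. With $\psi_t<\phi_t$ defined as you do, the nonlinear problem is exactly the classical two-sided Skorokhod problem with zero input in the moving interval $[\psi_t,\phi_t]$, and the displayed formula is the Burdzy--Kang--Ramanan (BKR) formula for that problem. Your barrier continuity, the uniform gap $\phi-\psi\ge\delta/C$, the finite-switching argument for bounded variation, the $d(K-K')^2$ uniqueness computation, and the sharper bound $|K_t|\le|\phi_t|\vee|\psi_t|$ all check out.

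A few points need tidying:
\begin{itemize}
\item The ranges in the formula must be read as $r\in[0,t]$ and $s\in[0,t]$. As printed with $[0,T]$ the formula is false: for $\psi$ increasing from $-1$ to $\tfrac12$ and $\phi=\psi+2$, it gives $K_0=\tfrac12$, although $0\in[\psi_0,\phi_0]$ forces $K_0=0$. So ``verify the stated formula'' must refer to the $[0,t]$ version.
\item To obtain the stated form, apply BKR directly with lower barrier $\psi$ and upper barrier $\phi$. After your flip $\bar K=-K$, BKR instead returns the dual max--min expression, which agrees with the stated one only via uniqueness.
\item Only $K_{0-}=0$ is imposed, so $K_0$ is in general nonzero: it is the projection of $0$ onto $[\psi_0,\phi_0]$. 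Your piecewise construction and your uniqueness argument should both start from this value.
\item Your fallback of building $K$ piecewise and then identifying it with the formula ``by uniqueness'' only works if you already know the formula defines a solution. So the formula genuinely rests on the BKR citation, or on a direct check that it reduces to running maxima of $\psi$ and running minima of $\phi$ on each monotone piece. Citing \cite{Li} for it would be circular.
\end{itemize}
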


The following proposition provides the continuity property of the solution  to the Skorokhod problem with respect to input function $s$ and the reflecting boundary functions $l,r$.

\begin{proposition}[\cite{Li}]\label{continuity}
Suppose that $(l^i,r^i)$ satisfy Assumption $\ref{asslr}$, $i=1,2$.
Given  $s^i\in C[0,T]$, let $(x^i,K^i)$ be the solution to the Skorokhod problem $\mathbb{SP}_{l^i}^{r^i}(s^i)$. Then, we have
\begin{equation}\label{diffk}
\sup_{t\in[0,T]}\left|K^1_t-K^2_t\right|
\leq \frac{C}{c}\sup_{t\in[0,T]}\left|s^1_t-s^2_t\right|+\frac{1}{c}(\bar{L}_T\vee\bar{R}_T),
\end{equation}
where
\begin{align*}
\bar{L}_T=\sup_{(t,x)\in[0,T]\times \mathbb{R}}\left|{l}^1(t,x)-{l}^2(t,x)\right|,\
\bar{R}_T=\sup_{(t,x)\in[0,T]\times \mathbb{R}}\left|{r}^1(t,x)-{r}^2(t,x)\right|.
\end{align*}
\end{proposition}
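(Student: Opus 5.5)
The plan is to use the explicit representation of $K$ from Theorem \ref{SP}. Let $\phi^i_t,\psi^i_t$ be the unique roots of $l^i(t,s^i_t+x)=0$ and $r^i(t,s^i_t+x)=0$. By Theorem \ref{SP}, $K^i_t=-\Phi_t(\phi^i,\psi^i)$, where $\Phi_t$ is a composition of $\max$, $\min$, $\sup$, $\inf$ and $(\cdot)^+$ applied to $\phi^i,\psi^i$. All of these operations are $1$-Lipschitz with respect to the sup norm. Consequently,
\begin{equation*}
\sup_{t\in[0,T]}|K^1_t-K^2_t|\le \sup_{t\in[0,T]}|\phi^1_t-\phi^2_t|\vee\sup_{t\in[0,T]}|\psi^1_t-\psi^2_t|.
\end{equation*}
This holds because $|\max(a,b)-\max(a',b')|\le |a-a'|\vee|b-b'|$, and likewise for $\wedge$, $\sup$, $\inf$ and the positive part. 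So the task reduces to bounding the differences of the roots.

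For the roots, set $a^i_t=s^i_t+\phi^i_t$. This is the unique zero of $l^i(t,\cdot)$, which exists and is unique because of strict monotonicity and the bi-Lipschitz bound in Assumption \ref{asslr}(ii)--(iii). Using $l^1(t,a^1_t)=0=l^2(t,a^2_t)$ and the lower bound $c|x-y|\le|l^1(t,x)-l^1(t,y)|$, we get
\begin{equation*}
c|a^1_t-a^2_t|\le |l^1(t,a^1_t)-l^1(t,a^2_t)|=|l^2(t,a^2_t)-l^1(t,a^2_t)|\le \bar L_T.
\end{equation*}
It follows that $|\phi^1_t-\phi^2_t|\le |s^1_t-s^2_t|+\bar L_T/c$. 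The same argument for $r$ gives $|\psi^1_t-\psi^2_t|\le |s^1_t-s^2_t|+\bar R_T/c$. Since $C/c\ge 1$, combining this with the reduction step yields
\begin{equation*}
\sup_t|K^1_t-K^2_t|\le \sup_t|s^1_t-s^2_t|+\tfrac1c(\bar L_T\vee\bar R_T)\le \tfrac{C}{c}\sup_t|s^1_t-s^2_t|+\tfrac1c(\bar L_T\vee\bar R_T),
\end{equation*}
which is \eqref{diffk}.

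The main obstacle is the first step: checking carefully that the explicit formula in Theorem \ref{SP} really is a composition of $1$-Lipschitz operations, with the maximum over the two arguments rather than their sum. In particular, the nested $\sup_s[(-\phi_s)\wedge\inf_{r\in[s,t]}(-\psi_r)]$ must be controlled by $\|\phi^1-\phi^2\|_\infty\vee\|\psi^1-\psi^2\|_\infty$, and the indices in that formula (which appear slightly garbled in the statement) must be read consistently. If this pointwise lattice argument turned out to be delicate, the fallback is to decouple the two perturbations. First compare $\mathbb{SP}^{r^1}_{l^1}(s^1)$ with $\mathbb{SP}^{r^1}_{l^1}(s^2)$, using the Lipschitz constant $C$ of $l^1,r^1$ in $x$; this is where the factor $C/c$ naturally appears. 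Then compare $\mathbb{SP}^{r^1}_{l^1}(s^2)$ with $\mathbb{SP}^{r^2}_{l^2}(s^2)$ through the root estimate above. The two contributions would then be added. Done this way, only the maximum form of the second term requires the sharper lattice argument, since adding the two perturbation bounds naively gives a sum rather than a maximum.
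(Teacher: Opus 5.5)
Your argument is correct and complete. This paper does not prove the statement itself; it quotes it from \cite{Li}. Your route is the natural one and presumably the intended one. It has three ingredients: the explicit formula of Theorem \ref{SP}; the fact that $\max$, $\wedge$, $\sup$, $\inf$ and $(\cdot)^+$ are $1$-Lipschitz for the sup norm, which gives $\sup_t|K^1_t-K^2_t|\le \|\phi^1-\phi^2\|_\infty\vee\|\psi^1-\psi^2\|_\infty$; and a perturbation bound for the roots. The lattice step is immediate, and it is insensitive to how the garbled index ranges are read, since the same formula is applied to both data.

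Your proof differs from the stated estimate only in the root step. You observe that $s^i_t+\phi^i_t$ is the zero of $l^i(t,\cdot)$, which does not depend on $s^i$. This gives $|\phi^1_t-\phi^2_t|\le|s^1_t-s^2_t|+\bar L_T/c$, so you obtain the sharper constant $1$ in front of $\sup_t|s^1_t-s^2_t|$. The factor $C/c$ in \eqref{diffk} is what one gets by estimating $\phi^1_t-\phi^2_t$ directly with the bi-Lipschitz bounds of $l^1$:
\begin{equation*}
c|\phi^1_t-\phi^2_t|\le\bigl|l^2(t,s^2_t+\phi^2_t)-l^1(t,s^1_t+\phi^2_t)\bigr|\le \bar L_T+C|s^1_t-s^2_t|,
\end{equation*}
and similarly for $\psi$. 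Your fallback decoupling argument is therefore unnecessary.
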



\subsection{Backward Skorokhod problem}
In this section, we introduce the backward Skorokhod problem with nonlinear constraints, including its definition, well-posedness, and key properties of its solution, while also clarifying its relationship with the classical Skorokhod problem.

\begin{definition}\label{def2}
Let $s\in C[0,T]$, $a\in \mathbb{R}$ and $l,r:[0,T]\times \mathbb{R}\rightarrow \mathbb{R}$ be two functions such that $l\leq r$ and $l(T,a)\leq 0\leq r(T,a)$. A pair of functions $(x,K)\in C[0,T]\times BV[0,T]$ is called a solution of the backward Skorokhod problem for $s$ with nonlinear constraints $l,r$ ($(x,K)=\mathbb{BSP}_l^r(s,a)$ for short) if 
\begin{itemize}
\item[(i)] $x_t=a+s_T-s_t+K_T-K_t$;
\item[(ii)] $l(t,x_t)\leq 0\leq r(t,x_t)$, $t\in[0,T]$;
\item[(iii)]  $K$ has the decomposition $K=K^r-K^l$, where $K^r,K^l\in I[0,T]$ satisfy 
\begin{align}\label{iii}
\int_0^T I_{\{l(s,x_s)<0\}}dK^l_s=0, \  \int_0^T I_{\{r(s,x_s)>0\}}dK^r_s=0.
\end{align}
\end{itemize}
\end{definition}


\begin{theorem}[\cite{L}]\label{BSP}
Let Assumption \ref{asslr} holds. For any given $s\in C[0,T]$ and $a\in \mathbb{R}$ with $l(T,a)\leq 0\leq r(T,a)$, there exists a unique solution to the backward Skorokhod problem $(x,k)=\mathbb{BSP}_l^r(s,a)$. 
\end{theorem}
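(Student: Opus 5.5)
The plan is to reduce the backward Skorokhod problem to the forward one of Theorem \ref{SP} by reversing time, and then to use the uniqueness in Theorem \ref{SP} to obtain uniqueness here.

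\textbf{Time reversal.} For $t\in[0,T]$ set
\[
\tilde s_t:=a+s_T-s_{T-t},\qquad \tilde l(t,x):=l(T-t,x),\qquad \tilde r(t,x):=r(T-t,x).
\]
Then $\tilde s\in C[0,T]$ with $\tilde s_0=a$, and $(\tilde l,\tilde r)$ still satisfy Assumption \ref{asslr}: continuity in $t$, strict monotonicity, the bi-Lipschitz bounds and the positive gap are all invariant under $t\mapsto T-t$. Let $(\tilde x,\tilde K)=\mathbb{SP}_{\tilde l}^{\tilde r}(\tilde s)$ be given by Theorem \ref{SP}, and define
\[
x_t:=\tilde x_{T-t},\qquad K_t:=\tilde K_T-\tilde K_{T-t},
\]
and similarly $K^r_t:=\tilde K^{\tilde r}_T-\tilde K^{\tilde r}_{T-t}$ and $K^l_t:=\tilde K^{\tilde l}_T-\tilde K^{\tilde l}_{T-t}$. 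These are continuous, nondecreasing and vanish at $0$, so $K^r,K^l\in I[0,T]$.

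\textbf{Verifying the definition.} The identity $x_t=\tilde s_{T-t}+\tilde K_{T-t}$ gives $x_t=a+s_T-s_t+K_T-K_t$, since $K_T-K_t=\tilde K_{T-t}$ (using $\tilde K_0=0$). The constraint $l(t,x_t)\le0\le r(t,x_t)$ is the forward constraint evaluated at time $T-t$. The Skorokhod conditions transfer by the change of variables $u=T-t$, because the measure $dK^l$ is the image of $d\tilde K^{\tilde l}$ under this reflection.

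One subtlety must be handled: Theorem \ref{SP} allows a jump at time $0$ ($K_{0-}=0$), which here corresponds to a jump of $K$ at time $T$. The hypothesis $l(T,a)\le0\le r(T,a)$ means $\tilde s_0=a$ already satisfies the constraint at time $0$. I would argue from the explicit formula in Theorem \ref{SP} that $\tilde K_0=0$ in this case: $\tilde\phi_0\le 0\le\tilde\psi_0$, which makes the initial term vanish. Hence $\tilde K$ is continuous from $0$ and $K\in BV[0,T]$ is continuous. This gives existence.

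\textbf{Uniqueness.} Given any solution $(x,K)$ of $\mathbb{BSP}_l^r(s,a)$, the same reversal, with $\tilde K_t:=K_T-K_{T-t}$, produces a solution of $\mathbb{SP}_{\tilde l}^{\tilde r}(\tilde s)$, with the decomposition and the Skorokhod conditions preserved. Uniqueness in Theorem \ref{SP} then identifies $\tilde x$ and $\tilde K$, and therefore $(x,K)$.

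The main obstacle I expect is the bookkeeping at the endpoint: checking that the jump convention $K_{0-}=0$ of the forward problem matches the requirement $K\in BV[0,T]$ with $K_0=0$ and continuity. This is exactly where the assumption $l(T,a)\le 0\le r(T,a)$ is used.
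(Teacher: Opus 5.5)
Your argument is correct and follows the route the paper itself indicates: Remark~\ref{rem2.7} (via Theorem 3.9 of \cite{L}) identifies $\mathbb{BSP}_l^r(s,a)$ with the forward problem $\mathbb{SP}^{\tilde r}_{\tilde l}(\tilde s)$ under exactly your time reversal, so existence and uniqueness transfer from Theorem~\ref{SP}. One endpoint detail needs fixing. Since $l(T,a)\le 0\le r(T,a)$ and $l,r$ are increasing, one gets $\tilde\psi_0\le 0\le\tilde\phi_0$; your reversed inequalities would contradict $\tilde\psi_0<\tilde\phi_0$. It is also cleaner to deduce $\tilde K_0=0$, and that $\tilde K^{\tilde r},\tilde K^{\tilde l}$ do not jump at $0$, directly from the Skorokhod conditions, the strict monotonicity of $l,r$ and the gap condition, rather than from the displayed formula.
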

\begin{remark}\label{rem2.7}
Set
\begin{equation}
    \begin{split}
     &\bar{s}_t=a+s_T-s_{T-t},\ x_t=\bar{x}_{T-t},\ K_t=\bar{K}_{T}-\bar{K}_{T-t}, \ t\in[0,T],\\ 
     &\bar{l}\left(t,x\right)=l\left(T-t,x\right),\ \bar{r}\left(t,x\right)=r\left(T-t,x\right), \ (t,x)\in[0,T]\times\mathbb{R}.
    \end{split}
\end{equation}
According to Theorem 3.9 in \cite{L},  $\left(\bar{x},\bar{K}\right)$ is the unique solution to the Skorokhod problem $\mathbb{SP}^{\bar{r}}_{\bar{l}}\left(\bar{s}\right)$.
\end{remark}   
 
The following proposition provides the continuous dependence of the solution with respect to the input function $s$ and the reflecting boundary functions $l,r$.

\begin{proposition}[\cite{L}]\label{continuous}
Given $a^i\in\mathbb{R}$, $s^i\in C[0,T]$, $l^i,r^i$ satisfy Assumption \ref{asslr} and $l^i(T,a^i)\leq 0\leq r^i(T,a^i)$, $i=1,2$, 
 let $(x^i,k^i)$ be the solution to the backward Skorokhod problem $\mathbb{BSP}_{l^i}^{r^i}(s^i,a^i)$, $i=1,2$. Then, we have
\begin{equation}\label{diffK}
\sup_{t\in[0,T]}\left|K^1_t-K^2_t\right|
\leq  2\frac{C}{c}\left|a^1-a^2\right|+4\frac{C}{c}\sup_{t\in[0,T]}\left|s^1_t-s^2_t\right|+\frac{2}{c}(\bar{L}_T\vee\bar{R}_T),
\end{equation}
where $\bar{L}_T$ and $\bar{R}_T$ are the same as in Proposition \ref{continuity}.
\end{proposition}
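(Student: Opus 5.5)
The plan is to reduce Proposition \ref{continuous} to the forward estimate of Proposition \ref{continuity}, using the time reversal in Remark \ref{rem2.7}. For $i=1,2$ we set $\bar s^i_t=a^i+s^i_T-s^i_{T-t}$, $\bar l^i(t,x)=l^i(T-t,x)$ and $\bar r^i(t,x)=r^i(T-t,x)$. By Remark \ref{rem2.7}, $(\bar x^i,\bar K^i)=\mathbb{SP}^{\bar r^i}_{\bar l^i}(\bar s^i)$, with $x^i_t=\bar x^i_{T-t}$ and $K^i_t=\bar K^i_T-\bar K^i_{T-t}$. The reversed boundaries $\bar l^i,\bar r^i$ again satisfy Assumption \ref{asslr} with the same constants $c,C$. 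Time reversal is a bijection of $[0,T]\times\mathbb{R}$, so the sup-distances between the reversed boundaries are the same quantities $\bar L_T,\bar R_T$.

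First we control the reversed inputs. For every $t$,
\[
|\bar s^1_t-\bar s^2_t|\le |a^1-a^2|+|s^1_T-s^2_T|+|s^1_{T-t}-s^2_{T-t}|\le |a^1-a^2|+2\sup_{u\in[0,T]}|s^1_u-s^2_u|.
\]
Applying Proposition \ref{continuity} to the two forward problems then gives
\[
\sup_{t\in[0,T]}|\bar K^1_t-\bar K^2_t|\le \frac{C}{c}|a^1-a^2|+2\frac{C}{c}\sup_{u\in[0,T]}|s^1_u-s^2_u|+\frac1c(\bar L_T\vee\bar R_T).
\]

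Next we return to $K^i$. Since $K^1_t-K^2_t=(\bar K^1_T-\bar K^2_T)-(\bar K^1_{T-t}-\bar K^2_{T-t})$, we have
\[
\sup_{t\in[0,T]}|K^1_t-K^2_t|\le 2\sup_{t\in[0,T]}|\bar K^1_t-\bar K^2_t|.
\]
Combining this with the previous bound yields exactly the constants $2C/c$, $4C/c$ and $2/c$ claimed in \eqref{diffK}.

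The main obstacle is the technical check that Proposition \ref{continuity} applies to $\bar s^i$. Two points need attention. First, $\bar s^i_0=a^i$ may be nonzero, so the convention $K_{0-}=0$ in Definition \ref{def1} allows $\bar K^i$ to jump at time $0$; correspondingly $K^i$ can be nonzero at $T$. We should confirm that the estimate of Proposition \ref{continuity} covers this. It does, because the explicit formula in Theorem \ref{SP} involves $(-\phi_0)^+$, and because $l^i(T,a^i)\le0\le r^i(T,a^i)$ actually makes $\bar K^i_0=0$. Second, the reversed boundaries must satisfy the continuity and monotonicity hypotheses of Assumption \ref{asslr}, which is immediate since reversal only reparametrizes time. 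Once these are checked, the argument is just the two triangle inequalities above.
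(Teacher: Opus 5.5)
Your proof is correct and follows essentially the route behind this cited result. The paper gives no proof of its own, deferring to \cite{L}, but it uses the same reduction in Step 1 of the proof of Lemma \ref{4.1.1}: reverse time via Remark \ref{rem2.7}, apply Proposition \ref{continuity} with $\sup_t|\bar s^1_t-\bar s^2_t|\le |a^1-a^2|+2\sup_t|s^1_t-s^2_t|$, and lose a factor $2$ from $K^i_t=\bar K^i_T-\bar K^i_{T-t}$, which gives exactly the constants $2C/c$, $4C/c$ and $2/c$.
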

\subsection{A priori estimates for MFBSDEs}

In this subsection, we consider the MFBSDEs taking the following form:
\begin{equation}\label{MFBSDE}
    Y_t=\xi+\int_{t}^{T}f(s,Y_s, \mathbf{P}_{Y_s},Z_s,\mathbf{P}_{Z_s})ds-\int_{t}^{T}Z_sdB_s.
\end{equation}
We propose the following assumptions for the generator $f$ and the terminal value $\xi$.

\begin{assumption}\label{asslip2}
  \begin{itemize}
   \item [(i)] The terminal value $\xi\in \mathcal{L}^p_T$,$ \ p>1$;
      \item [(ii)] The process $\left\{f_t(0):=f(t,0,\delta_0,0,\delta_0)\right\}_{t\in[0,T]}\in \mathcal{H}^p$. There exists a constant $\lambda>0$ such that for all $t\in[0,T]$, $y, y'\in \mathbb{R}$, $\mu, \mu' \in \mathcal{P}_1(\mathbb{R})$, $z, z'\in \mathbb{R}$, $\nu, \nu'\in\mathcal{P}_1(\mathbb{R})$,
      \begin{align*}
\left|f(t,y,\mu,z,\nu)-f(t,y',\mu',z',\nu')\right|\leq \lambda\left(\left|y-y'\right|+\left|z-z'\right|+W_1(\mu,\mu')+W_1(\nu,\nu')\right).
\end{align*}
  \end{itemize}  
\end{assumption}

Similar to Proposition 3.3 in \cite{LX}, we have the following a priori estimates for MFBSDEs. 
\begin{lemma}\label{proMFBSDE}
Let Assumption \ref{asslip2} hold. Assume $(Y, Z)$ is the solution of the mean-field BSDE \eqref{MFBSDE}. Then, there exists a constant $M$ depending on the constant $\lambda,\ p,\ T$ such that
\begin{align}
&\E\left[ \sup_{s \in [0,T]} |Y_s|^p \right] + \E\left[\left(\int_0^T |Z_s|^2ds\right)^{\frac{p}{2}} \right]\leq M(\lambda, p, T) \E\left[ |\xi|^p + \left( \int_0^T |f_s(0)| ds \right)^p\right].
\end{align}
\end{lemma}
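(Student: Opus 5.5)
The strategy is to treat the mean-field terms as additional inhomogeneities and reduce to the classical a priori estimate for standard Lipschitz BSDEs in $L^p$. This is the estimate of Briand--Delyon--Hu--Pardoux--Stoica type: for $p>1$ and a generator $g$ Lipschitz in $(y,z)$,
\[
\E\Big[\sup_t|Y_t|^p+\Big(\int_0^T|Z_s|^2ds\Big)^{p/2}\Big]\le C\,\E\Big[|\xi|^p+\Big(\int_0^T|g(s,0,0)|ds\Big)^p\Big].
\]
The difficulty is that here the "source term" $g(s,0,0)$ contains $W_1(\mathbf{P}_{Y_s},\delta_0)=\E|Y_s|$ and $W_1(\mathbf{P}_{Z_s},\delta_0)=\E|Z_s|$. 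These must be absorbed, which I would do on short time intervals and then iterate backwards.

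\textbf{Step 1 (freezing the laws).} Given the solution $(Y,Z)$, define $g(s,y,z):=f(s,y,\mathbf{P}_{Y_s},z,\mathbf{P}_{Z_s})$. This is Lipschitz in $(y,z)$ with constant $\lambda$, and $(Y,Z)$ solves the standard BSDE with generator $g$. Moreover
\[
|g(s,0,0)|\le |f_s(0)|+\lambda(\E|Y_s|+\E|Z_s|).
\]
Applying the classical estimate on a subinterval $[u,v]$ with terminal value $Y_v$ gives
\[
\E\Big[\sup_{[u,v]}|Y|^p+\Big(\int_u^v|Z|^2\Big)^{p/2}\Big]\le C_0\,\E\Big[|Y_v|^p+\Big(\int_u^v|f_s(0)|ds\Big)^p\Big]+C_0\lambda^p\Big(\int_u^v(\E|Y_s|+\E|Z_s|)ds\Big)^p .
\]
Here $C_0$ depends only on $\lambda,p,T$.

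\textbf{Step 2 (absorption).} This is the main obstacle, because $\E|Z_s|$ is only controlled through $\int|Z|^2$. I would bound
\[
\int_u^v\E|Y_s|ds\le (v-u)\,\E\sup_{[u,v]}|Y|\le (v-u)\big(\E\sup_{[u,v]}|Y|^p\big)^{1/p},
\]
and, by Cauchy--Schwarz and Jensen (using $p>1$ and treating $p<2$ and $p\ge2$ the same way),
\[
\int_u^v\E|Z_s|ds\le (v-u)^{1/2}\,\E\Big[\Big(\int_u^v|Z|^2\Big)^{1/2}\Big]\le (v-u)^{1/2}\Big(\E\Big(\int_u^v|Z|^2\Big)^{p/2}\Big)^{1/p}.
\]
Hence the last term in Step 1 is at most $C_0\lambda^p 2^{p}(v-u)^{p/2}$ times the left-hand side (for $v-u\le1$). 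Choose $\delta$, depending only on $\lambda,p,T$, with $C_0\lambda^p2^p\delta^{p/2}\le 1/2$. The left-hand side is finite since the solution lies in $\mathcal S^p\times\mathcal H^p$, so it can be absorbed:
\[
\E\Big[\sup_{[u,v]}|Y|^p+\Big(\int_u^v|Z|^2\Big)^{p/2}\Big]\le 2C_0\,\E\Big[|Y_v|^p+\Big(\int_u^v|f_s(0)|ds\Big)^p\Big].
\]

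\textbf{Step 3 (iteration).} Partition $[0,T]$ into $N=\lceil T/\delta\rceil$ intervals $[t_{i-1},t_i]$ and apply the Step 2 bound backwards from $t_N=T$. At each stage $\E|Y_{t_{i-1}}|^p$ is bounded by the estimate already obtained on the later interval, and $\E(\int_{t_{i-1}}^{t_i}|f_s(0)|ds)^p\le\E(\int_0^T|f_s(0)|ds)^p$. Induction gives
\[
\E\sup_{[t_{i-1},t_i]}|Y|^p\le (2C_0)^{N-i+1}\cdot(\text{const})\cdot\E\Big[|\xi|^p+\Big(\int_0^T|f_s(0)|ds\Big)^p\Big].
\]
Summing over the $N$ intervals, and using $\sup_{[0,T]}|Y|^p\le\sum_i\sup_{[t_{i-1},t_i]}|Y|^p$ and $(\int_0^T|Z|^2)^{p/2}\le N^{p/2}\sum_i(\int_{t_{i-1}}^{t_i}|Z|^2)^{p/2}$, yields the claim. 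The resulting $M$ depends only on $\lambda,p,T$, since $C_0$, $\delta$ and $N$ do.
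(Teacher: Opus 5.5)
Your proof is correct. The paper gives no argument of its own for this lemma; it only says the estimate holds ``similar to Proposition 3.3 in \cite{LX}''. So there is no in-paper route to compare against, and what you have written is a complete, self-contained version of the standard argument.

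The steps are: freeze the deterministic laws so that $(Y,Z)$ solves a classical Lipschitz BSDE whose source term picks up $\lambda(\E|Y_s|+\E|Z_s|)$ (using $W_1(\mathbf{P}_X,\delta_0)=\E|X|$); apply the Briand--Delyon--Hu--Pardoux--Stoica $L^p$ estimate on $[u,v]$; absorb the mean-field contribution on intervals of length $\delta$; then iterate backwards over $\lceil T/\delta\rceil$ intervals. The absorption works because of the factor $(v-u)^{1/2}$ that Cauchy--Schwarz gives on $\E\int_u^v|Z_s|\,ds$, together with finiteness of the left-hand side for a solution in $\mathcal{S}^p\times\mathcal{H}^p$.

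Only cosmetic points remain.
\begin{itemize}
\item On $[t_{i-1},t_i]$ the terminal value is $Y_{t_i}$, not $Y_{t_{i-1}}$. It is controlled by the bound already obtained on $[t_i,t_{i+1}]$.
\item The factor $2^{p-1}$ from splitting $|g(s,0,0)|$ is silently absorbed into $C_0$.
\item $C_0$ should be taken uniform over subintervals of length at most $T$, which the BDHPS constant permits.
\end{itemize}

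The cost of the small-interval absorption is a constant $M$ growing like $(2C_0)^N$, i.e.\ exponentially in $T$. The lemma only asks for dependence on $(\lambda,p,T)$, so this is harmless.
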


\section{Doubly mean reflected MFBSDEs with nonlinear resistance: the Lipschitz case}

The main purpose of this paper is to study the MFBSDE with nonlinear resistance and double mean reflections of the following type
\begin{equation}\label{nonlinearyz}
\begin{cases}
Y_t=\xi+\int_t^T f(s,Y_s,\P_{Y_s},Z_s,\P_{Z_s},G_s(K))ds-\int_t^T Z_s dB_s+K_T-K_t, \\
\E[L(t,Y_t)]\leq 0\leq \E[R(t,Y_t)], \\
K_t=K^R_t-K^L_t,\ K^R,K^L\in I[0,T],\\
\int_0^T \E[R(t,Y_t)]dK_t^R=\int_0^T \E[L(t,Y_t)]dK^L_t=0.
\end{cases}
\end{equation}
The loss functions $L,R:\Omega\times [0,T]\times\mathbb{R}\rightarrow \mathbb{R}$ are measurable maps with respect to $\mathcal{F}_T\times \mathcal{B}([0,T])\times \mathcal{B}(\mathbb{R})$ satisfying the following conditions.

\begin{assumption}\label{assLR}
\begin{itemize}
\item[(1)] For any fixed $(\omega,x)\in \Omega\times\mathbb{R}$, $L(\omega, \cdot, x), R(\omega, \cdot, x)$ are continuous;
\item[(2)]  There exists a constant $M>0$ such that $$\mathbf{E}\left[\sup_{t\in[0,T]}\left|L(t,0)\right|+\sup_{t\in[0,T]}\left|R(t,0)\right|\right]\leq M.$$
\item[(3)] For any fixed $(\omega,t)\in \Omega\times [0,T]$, $L(\omega,t,\cdot),R(\omega,t,\cdot)$ are strictly increasing and there  exist two constants $c,C$ satisfying $0<c<C$ such that for any $x,y\in \mathbb{R}$,
\begin{align*}
&c\left|x-y\right|\leq \left|L(\omega,t,x)-L(\omega,t,y)\right|\leq C\left|x-y\right|,\\
&c\left|x-y\right|\leq \left|R(\omega,t,x)-R(\omega,t,y)\right|\leq C\left|x-y\right|;
\end{align*}
\item[(4)] $\inf_{\omega,t,x} \left(R(\omega,t,x)-L(\omega,t,x)\right)>0$.
\end{itemize}
\end{assumption}
Compared with the doubly mean reflected BSDEs studied in \cite{L}, the generator $f$ in \eqref{nonlinearyz} depends not only on the distribution of the solution, but also on the compensating term $K$, which is required to satisfy the following assumption.
\begin{assumption}\label{ass1ipterminalp}
\begin{itemize}
      \item[(i)]  The terminal value $\xi \in \mathcal{L}^{p}_T $ satisfies $\mathbf{E}[L(T,\xi)]\leq 0 \leq \mathbf{E}[R(T,\xi)]$; 
      \item[(ii)] The generator $f$: $[0,T]\times\Omega\times \mathbb{R}\times \mathcal{P}_1(\mathbb{R})\times\mathbb{R}\times \mathcal{P}_1(\mathbb{R})\times \mathbb{R}$ $\rightarrow$ $\mathbb{R}$ is a $\mathcal{P}\times\mathcal{B}(\mathcal{P}_1(\mathbb{R}))\times\mathcal{B}(\mathbb{R})\times\mathcal{B}(\mathcal{P}_1(\mathbb{R}))\times\mathcal{B}(\mathbb{R})$-measurable map, and there exists a constant $\lambda>0$ such that for all $t\in[0,T]$, $y, y', k, k' \in \mathbb{R}$, $\mu, \mu' \in \mathcal{P}_1(\mathbb{R})$, $z, z'\in \mathbb{R}$, $\nu, \nu'\in\mathcal{P}_1(\mathbb{R})$, 
\begin{align*}
\left|f(t,y,\mu,z,\nu,k)-f(t,y',\mu',z',\nu',k')\right|\leq \lambda\left(\left|y-y'\right|+\left|z-z'\right|+W_1(\mu,\mu')+W_1(\nu,\nu')+\left|k-k'\right|\right).
\end{align*}
   \end{itemize}
\end{assumption}

For each $t\in[0,T]$, the resistance function $G_t:  BV[0,T]\rightarrow \mathbb{R}$ satisfies the following conditions.
\begin{assumption}\label{assgk}
For any $t\in[0,T]$, given $y\in BV[0,T]$,  we have $G_{t}(y)=G_{t}\left ( \left \{ y_{s\wedge t} \right \}_{0\le s\le t\le T}  \right ) $ and  $G_t(0)=0$. Moreover, there exists a constant $\lambda>0$, such that for any $y,y'\in BV[0,T]$,   $$\left | G_{t}(y)-G_t(y') \right | \leq \lambda \sup _{u\in[0,t]}\left | y_u-y'_{u} \right |.$$
\end{assumption}

\begin{example}
  We give some simple  examples of  $G_t(\cdot)$ satisfying Assumption \ref{assgk}.

  \begin{itemize}
      \item[(i)] $G_t(y)=y_t$.
      \item[(ii)] $G_t(y)=\sup_{0\leq s\leq t}y_s$.
      \item[(iii)] $G_{t}(y)=\int_{0}^{t}y_sds$.
  \end{itemize}
\end{example}

The construction of the solution to doubly mean reflected MFBSDE with nonlinear resistance \eqref{nonlinearyz} relies on the one of the solution to doubly mean reflected BSDE with constant generator as listed below. 
\begin{proposition}\label{pro-1}
  Let  Assumption \ref{ass1ipterminalp} and \ref{assLR} hold. Given $C\in \mathcal{H}^p$, the BSDE with double mean reflections 
   \begin{equation}\label{MFBSDEC}
   \begin{cases}
Y_t=\xi+\int_t^T C_s ds-\int_t^T Z_s dB_s+K_T-K_t, \\
\E[L(t,Y_t)]\leq 0\leq \E[R(t,Y_t)], \\
K_t=K^{R}_t-K^{L}_t, \int_0^T \E[R(t,Y_t)]dK_t^{R}=\int_0^T \E[L(t,Y_t)]dK^{L}_t=0,
\end{cases}
\end{equation}
has a unique solution $\left(Y,Z,K\right)\in \mathcal{S}^p\times\mathcal{H}^{p}\times BV[0,T]$.
\end{proposition}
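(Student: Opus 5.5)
The plan is to reduce the problem to the deterministic backward Skorokhod problem of Theorem \ref{BSP}, following the construction of Briand--Elie--Hu \cite{BEH} and \cite{L}. The key observation is that $K$ is deterministic. So if $(Y,Z,K)$ solves \eqref{MFBSDEC}, then $Y_t-(K_T-K_t)=:X_t$ solves the linear BSDE without reflection:
\[
X_t=\xi+\int_t^T C_s\,ds-\int_t^T Z_s\,dB_s .
\]
Equivalently, $X_t=\E_t[\xi+\int_t^T C_s ds]$, with $Z$ given by the martingale representation theorem. Since $\xi\in\mathcal{L}^p_T$ and $C\in\mathcal{H}^p$, the Doob and BDG inequalities give $(X,Z)\in\mathcal{S}^p\times\mathcal{H}^p$ (this is Lemma \ref{proMFBSDE} with $f=C$). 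Thus $Y_t=X_t+K_T-K_t$, and everything reduces to finding a deterministic $K$.

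Next I would define the deterministic constraint functions
\[
l(t,x):=\E\bigl[L(t,X_t-\E[X_t]+x)\bigr],\qquad r(t,x):=\E\bigl[R(t,X_t-\E[X_t]+x)\bigr].
\]
I would also set $s_t:=\E[X_t]-\E[\xi]=\E[\int_t^T C_u du]$ reversed appropriately. Concretely, write $\E[Y_t]=\E[\xi]+\E\int_t^T C_u du+K_T-K_t$, so that with $a=\E[\xi]$ and $s_t=-\int_0^t\E[C_u]du$ we get $x_t:=\E[Y_t]=a+s_T-s_t+K_T-K_t$. The constraint $\E[L(t,Y_t)]\le0\le\E[R(t,Y_t)]$ then becomes $l(t,x_t)\le 0\le r(t,x_t)$. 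The Skorokhod conditions $\int\E[R(t,Y_t)]dK^R=0$ and $\int\E[L(t,Y_t)]dK^L=0$ become, because $r\ge0$ and $l\le 0$ along the solution, exactly condition (iii) in Definition \ref{def2}.

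I then need to check Assumption \ref{asslr} for $l,r$:
\begin{itemize}
\item[(i)] Continuity in $t$ follows from continuity of $L(\omega,\cdot,x)$, continuity of the paths of $X$ and of $t\mapsto\E[X_t]$, and dominated convergence. The dominating function is $\sup_t|L(t,0)|+C(2\sup_t|X_t|+|x|)$, which is integrable by Assumption \ref{assLR}(2) and $X\in\mathcal{S}^p$.
\item[(ii)--(iii)] Strict monotonicity and the bi-Lipschitz bounds with the same constants $c,C$ pass through the expectation, since $L(\omega,t,\cdot)$ is increasing, so $L(t,u)-L(t,v)$ has a fixed sign and lies between $c(u-v)$ and $C(u-v)$.
\item[(iv)] The positive gap $r-l\ge\inf(R-L)>0$ is inherited from Assumption \ref{assLR}(4).
\end{itemize}
Finally, $l(T,a)=\E[L(T,\xi)]\le0\le\E[R(T,\xi)]=r(T,a)$ by Assumption \ref{ass1ipterminalp}(i). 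Theorem \ref{BSP} then yields a unique $(x,K)=\mathbb{BSP}_l^r(s,a)$ with $K\in BV[0,T]$. Setting $Y=X+K_T-K_\cdot$ gives a solution in $\mathcal{S}^p\times\mathcal{H}^p\times BV[0,T]$, where $\E[Y_t]=x_t$ is consistent by construction.

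For uniqueness, take any solution $(Y',Z',K')$. Taking conditional expectations shows that $Y'-(K'_T-K'_\cdot)$ solves the same linear BSDE, hence equals $X$, and $Z'=Z$. Then $(\E[Y'_\cdot],K')$ solves the same $\mathbb{BSP}_l^r(s,a)$, and uniqueness in Theorem \ref{BSP} gives $K'=K$.

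The main obstacle is the careful translation of the expectation-type Skorokhod conditions into the pointwise indicator form (iii). One direction uses nonnegativity of the integrand on the support of $dK^R$; the other uses that the indicator condition forces the integrand to vanish $dK$-a.e. A second delicate point is verifying continuity of $l,r$ in $t$ when $L$ is random. I would handle the latter by dominated convergence as described above.
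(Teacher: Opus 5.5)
Your proposal is correct and follows the paper's own construction: solve the unreflected BSDE with driver $C$, build $l,r$ from the centred process $X_t-\E[X_t]$, check Assumption \ref{asslr}, apply Theorem \ref{BSP} to $\mathbb{BSP}_l^r(s,a)$ with $a=\E[\xi]$, and set $Y=X+K_T-K_\cdot$. Your uniqueness argument, via uniqueness of the linear BSDE and of the backward Skorokhod problem, fills in what the paper leaves implicit. The one slip is the sign of $s$: to get $a+s_T-s_t=\E[X_t]=\E[\xi]+\int_t^T\E[C_u]\,du$ you need $s_t=\int_0^t\E[C_u]\,du$ (as in the paper), not $s_t=-\int_0^t\E[C_u]\,du$.
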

\begin{proof}
 The proof is similar to the one for Proposition 3.4 in \cite{LS}. For the purpose of the remaining proof, we briefly describe the construction here.
 
Let $(\tilde{Y},Z)\in \mathcal{S}^p\times \mathcal{H}^p$ be the solution to the BSDE with terminal value $\xi$ and constant generator $C$. For any $t\in[0,T]$, set 
 $$\begin{aligned}
  & s_t=\E\left[\int_{0}^{t}C_s ds\right], a=\E\left[\xi\right]
 \end{aligned}$$
and for any $(t,x)\in[0,T]\times \mathbb{R}$, we define 
$$l(t,x):=\E\left[L\left(t,\tilde{Y}_t-\E[\tilde{Y}_t]+x\right)\right],r(t,x):=\E\left[R\left(t,\tilde{Y}_t-\E[\tilde{Y}_t]+x\right)\right].$$
It is easy to check that $s$ is a continuous function  and $l,r$ satisfy Assumption \ref{asslr}. By Theorem \ref{BSP}, the backward Skorokhod problem $\mathbb{BSP}_l^r(s,a)$ admits a unique solution $(x,K)$. Set$$Y_t=\tilde{Y}_t+K_T-K_t=\xi+\int_{t}^{T}C_s ds-\int_{t}^{T}Z_s d B_s+K_T-K_t.$$ 
Then, $(Y,Z,K)$ is the solution to \eqref{MFBSDEC}.
\end{proof}

Now, we are ready to present the main result in this section.
\begin{theorem}\label{thlip}
 Let Assumptions $\ref{ass1ipterminalp}-\ref{assgk}$ hold. Then,  the doubly mean reflected MFBSDE with nonlinear resistance \eqref{nonlinearyz} admits a unique solution $(Y,Z,K)\in \mathcal{S}^p \times \mathcal{H}^{p} \times BV[0,T].$  
\end{theorem}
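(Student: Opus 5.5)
We plan a two-step fixed-point argument. In the first step we freeze the resistance term. Given $\bar K\in BV[0,T]$, set $g_s:=G_s(\bar K)$; this is a deterministic function with $|g_s|\le \lambda\sup_{u\le s}|\bar K_u|$. The generator $\tilde f(s,y,\mu,z,\nu):=f(s,y,\mu,z,\nu,g_s)$ then satisfies the Lipschitz conditions of Assumption \ref{ass1ipterminalp} with no $k$-dependence. By the well-posedness result for doubly mean reflected MFBSDEs with Lipschitz generator (\cite{LS}), there is a unique solution $(Y,Z,K)\in\mathcal{S}^p\times\mathcal{H}^p\times BV[0,T]$. The construction in \cite{LS} is itself a contraction built on Proposition \ref{pro-1}: freeze $(Y,Z)$ inside $f$, then solve with the constant generator. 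We define $\Gamma(\bar K):=K$. Solutions of \eqref{nonlinearyz} then correspond exactly to fixed points of $\Gamma$ on $BV[0,T]$, completed in the sup norm, or on $C[0,T]$ with $K_0=0$.

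In the second step we estimate $\sup_t|\Gamma(\bar K^1)_t-\Gamma(\bar K^2)_t|$. Write $(Y^i,Z^i,K^i)$ for the corresponding solutions, and $\tilde Y^i$ for the non-reflected parts $Y^i-(K^i_T-K^i_\cdot)$. By the representation in the proof of Proposition \ref{pro-1}, $K^i$ solves $\mathbb{BSP}_{l^i}^{r^i}(s^i,a)$ with
\[
s^i_t=\E\Big[\int_0^t f^i_u\,du\Big],\qquad l^i(t,x)=\E\big[L(t,\tilde Y^i_t-\E[\tilde Y^i_t]+x)\big],
\]
and $r^i$ defined analogously. Proposition \ref{continuous} then gives
\[
\sup_t|K^1_t-K^2_t|\le \frac{4C}{c}\sup_t|s^1_t-s^2_t|+\frac{2C}{c}\,\E\Big[\sup_t|\tilde Y^1_t-\tilde Y^2_t|\Big].
\]
Both terms are controlled by $\E\int_0^T|f^1_u-f^2_u|\,du$ together with the $(Y,Z)$ differences. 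The resulting estimate is circular because $Y^i$ contains $K^i$. We resolve this with the standard MFBSDE stability estimate, Lemma \ref{proMFBSDE}, applied to the difference equation, and we absorb terms as in \cite{LS}. The outcome should be a bound
\[
\sup_{t\le T}|K^1_t-K^2_t|\le \Lambda\int_0^T\sup_{u\le s}|\bar K^1_u-\bar K^2_u|\,ds,
\]
where the constant $\Lambda$ depends on $\lambda,c,C,p,T$. The factor $\lambda$ in front of the integral comes from Assumption \ref{assgk}. It is also worth noting that the bound restricted to $[0,t]$ only requires information up to time $T$, not merely up to $t$, because the problem is backward. This is the genuine forward–backward difficulty.

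The main obstacle is turning this estimate into a contraction on the full interval $[0,T]$, since the local solutions cannot simply be stitched together. I would first try a weighted norm $\|K\|_\beta:=\sup_t e^{-\beta t}|K_t|$. If the estimate can be localized to the form
\[
|K^1_t-K^2_t|\le \Lambda\int_0^T \sup_{u\le s}|\bar K^1_u-\bar K^2_u|\,ds,
\]
which does not decay in $t$, the weight gives only a factor $\Lambda(1-e^{-\beta T})/\beta\cdot e^{\beta T}$. That is not small. The fallback, which I believe works because $G_s$ depends only on $[0,s]$, is to iterate $\Gamma$. By induction, the $n$-th iterate satisfies
\[
\|\Gamma^n\bar K^1-\Gamma^n\bar K^2\|\le \frac{(\Lambda T)^n}{n!}\,\|\bar K^1-\bar K^2\|,
\]
provided the Volterra structure survives. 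Failing that, we use smallness of $T$ combined with an a priori bound and a continuation argument. Once $\Gamma^n$ is a contraction, Banach's theorem gives a unique fixed point $K$, and the corresponding $(Y,Z)$ lies in $\mathcal{S}^p\times\mathcal{H}^p$ by Lemma \ref{proMFBSDE} and Theorem \ref{SP}. Uniqueness follows because any solution of \eqref{nonlinearyz} yields a fixed point of $\Gamma$ by the uniqueness in \cite{LS}.
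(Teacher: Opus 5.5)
Your freezing step is the same as the paper's: replace $G_s(K)$ by the deterministic $G_s(\bar K)$, solve via the Li--Shi result, and set $\Gamma(\bar K)=K$. Your global stability bound $\sup_t|\Gamma(\bar K^1)_t-\Gamma(\bar K^2)_t|\le\Lambda(T)\int_0^T\sup_{u\le s}|\bar K^1_u-\bar K^2_u|\,ds$ also looks obtainable, by stitching the small-interval estimates backward and carrying the terminal-value differences.

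The gap is the globalization step. That bound gives a contraction only when $\Lambda(T)\,T<1$, and $\Lambda(T)$ grows with $T$. The $(\Lambda T)^n/n!$ iteration would need $\int_0^t$ in place of $\int_0^T$, and that fails, as you half suspected. We have $K_t=K_T-(K_T-K_t)$, where the total reflection $K_T$ depends on the generator on all of $[0,T]$. The remaining term $K_T-K_t$ depends on $G_s(\bar K)$ for $s\in[t,T]$, hence again on $\bar K$ on all of $[0,T]$. So there is no triangular structure in either time direction, and ``smallness plus continuation'' does not supply one.

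The paper does not provide the missing idea either. It applies Lemma~\ref{le3-2} on each $[(i-1)\varepsilon,i\varepsilon\wedge T]$ and takes a maximum. That lemma, however, is proved for a common terminal value and for a reflection vanishing at the left endpoint. On interior subintervals $Y^1_{i\varepsilon}\neq Y^2_{i\varepsilon}$, and the increments of $K^1-K^2$ add up across subintervals. So the $T$-independent contraction constant it asserts is not justified.

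In fact no argument can close this gap, because uniqueness fails once $\lambda T$ is large. Take $T=1$, $\xi=0$, $f(s,y,\mu,z,\nu,k)=-4\,\mathbf{1}_{[1/2,1]}(s)\,k$ and $G_s(K)=K_{s\wedge 1/2}$. Take $L(t,x)=x-100$ and $R(t,x)=x-l_t$, with $l_t=-1-18t$ on $[0,1/2]$ and $l_t=-10$ on $[1/2,1]$. Assumptions~\ref{assLR}, \ref{ass1ipterminalp} and~\ref{assgk} hold with $\lambda=4$, $c=1/2$, $C=2$. The constraint reads $l_t\le\mathbf{E}[Y_t]\le 100$, and every solution is deterministic with $Z=0$. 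Besides $(Y,Z,K)\equiv(0,0,0)$, the triple with $Z=0$ and
\[
K_t=\begin{cases}1-(1-18t)^+, & t\le 1/2,\\ 1, & t\ge 1/2,\end{cases}\qquad Y_t=\begin{cases}\max(l_t,-2), & t\le 1/2,\\ -4(1-t), & t\ge 1/2,\end{cases}
\]
is also a solution. Indeed, $\int_t^1 f\,ds=-2K_{1/2}=-2$ for $t\le 1/2$, so $Y_t=-2+(K_1-K_t)$ there. Moreover, $K=K^R$ is nondecreasing with $K_0=0$, and it increases only on $[0,1/18]$, where $Y_t=l_t$.

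Parametrizing by $\kappa:=K_{1/2}$ (necessarily $\ge 0$), the problem reduces to $\kappa=(\min(2\kappa,10)-1)^+$. Its fixed points $0,1,9$ all give solutions, and the slope $2>1$ is precisely the forward--backward obstruction you identified. So your approach proves the result when $\Lambda(T)T<1$, i.e. for short horizons. Theorem~\ref{thlip} for arbitrary $T$ is false as stated and needs an additional smallness or monotonicity condition.
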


We apply a contraction mapping argument to prove Theorem \ref{thlip}. For this purpose, for each fixed $(y,z,k)\in \mathcal{S}^p \times \mathcal{H}^{p} \times BV[0,T]$, according to Proposition \ref{pro-1},   the following BSDE with double mean reflections 
\begin{equation}\label{yzk}
\begin{cases}
Y_t^{y,z,k}=\xi+\int_t^T f(s,y_s,\P_{y_s},z_s,\P_{z_s}, G_s(k))ds-\int_t^T Z_s^{y,z,k} dB_s+K_T^{y,z,k}-K_t^{y,z,k}, \\
\E[L(t,Y_t^{y,z,k})]\leq 0\leq \E[R(t,Y_t^{y,z,k})], \\
K^{y,z,k}_t=K^{y,z,k,R}_t-K^{y,z,k,L}_t, \\
\int_0^T \E[R(t,Y_t^{y,z,k})]dK_t^{y,z,k,R}=\int_0^T \E[L(t,Y_t^{y,z,k})]dK^{y,z,k,L}_t=0
\end{cases}
\end{equation}
has a unique solution $(Y^{y,z,k},Z^{y,z,k},K^{y,z,k})\in \mathcal{S}^p \times \mathcal{H}^{p} \times BV[0,T]$. Then, we define the  map
\begin{align*}
\Gamma:\mathcal{S}^p\times\mathcal{H}^{p}\times BV[0,T]&\rightarrow \mathcal{S}^p\times\mathcal{H}^{p}\times BV[0,T],  \\
\Gamma(y^i,z^i,k^i)&=(Y^i,Z^i,K^i).
\end{align*}
We first show that $\Gamma$ is a contraction mapping  when $T$ is sufficiently small. 
\begin{lemma}\label{le3-2}
Let $\varepsilon$ satisfy $\beta(\lambda,c,C,p)\max\left(\varepsilon^{\frac{p}{2}},\varepsilon^p\right)<1$, where $\beta$ is a positive constant depending only on $\lambda, C,c,p$. For $T\in (0,\varepsilon]$, the solution map $\Gamma$ is contractive. 
\end{lemma}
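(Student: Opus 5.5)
We take two inputs $(y^i,z^i,k^i)\in\mathcal{S}^p\times\mathcal{H}^p\times BV[0,T]$, $i=1,2$, write $(Y^i,Z^i,K^i)=\Gamma(y^i,z^i,k^i)$, and estimate $\hat Y=Y^1-Y^2$, $\hat Z=Z^1-Z^2$, $\hat K=K^1-K^2$ in the norm
$$\|(y,z,k)\|^p:=\E\Big[\sup_{t\in[0,T]}|y_t|^p\Big]+\E\Big[\Big(\int_0^T|z_s|^2ds\Big)^{p/2}\Big]+\sup_{t\in[0,T]}|k_t|^p .$$
We set $C^i_s=f(s,y^i_s,\P_{y^i_s},z^i_s,\P_{z^i_s},G_s(k^i))$. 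By Assumptions \ref{ass1ipterminalp} and \ref{assgk}, together with $W_1(\P_{y^1_s},\P_{y^2_s})\le \E|y^1_s-y^2_s|$ and the analogous bound for $z$,
$$|C^1_s-C^2_s|\le \lambda\big(|\hat y_s|+\E|\hat y_s|+|\hat z_s|+\E|\hat z_s|+\lambda\sup_{u\le s}|\hat k_u|\big).$$
Hölder in time gives
$$\E\Big[\Big(\int_0^T|C^1_s-C^2_s|ds\Big)^p\Big]\le \beta_0\Big(T^p\,\E\big[\sup_t|\hat y_t|^p\big]+T^{p/2}\,\E\Big[\Big(\int_0^T|\hat z_s|^2ds\Big)^{p/2}\Big]+T^p\sup_t|\hat k_t|^p\Big).$$
This is where the factors $\varepsilon^p$ and $\varepsilon^{p/2}$ in the statement come from.

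\textbf{Step 1: the $K$ component.} Following the construction in the proof of Proposition \ref{pro-1}, $K^i$ is the second component of $\mathbb{BSP}_{l^i}^{r^i}(s^i,a)$. Here $s^i_t=\E[\int_0^tC^i_sds]$ and $a=\E[\xi]$ is common to both. The constraints are $l^i(t,x)=\E[L(t,\tilde Y^i_t-\E\tilde Y^i_t+x)]$ and similarly $r^i$, where $\tilde Y^i$ solves the BSDE with generator $C^i$. We apply Proposition \ref{continuous}. The input term satisfies $\sup_t|s^1_t-s^2_t|\le \E\int_0^T|C^1_s-C^2_s|ds$. Since $L$ and $R$ are $C$-Lipschitz,
$$\bar L_T\vee\bar R_T\le 2C\sup_t\E|\tilde Y^1_t-\tilde Y^2_t|.$$
The standard BSDE estimate (Lemma \ref{proMFBSDE} with a generator independent of $(y,z)$), applied to $\tilde Y^1-\tilde Y^2$ with zero terminal value, bounds this by $M\,\E[(\int_0^T|C^1_s-C^2_s|ds)^p]^{1/p}$. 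Hence $\sup_t|\hat K_t|^p$ is at most a constant depending on $c,C,p$ times $\E[(\int_0^T|C^1-C^2|ds)^p]$.

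\textbf{Step 2: the $(Y,Z)$ components.} We have $\hat Y_t=\tilde Y^1_t-\tilde Y^2_t+(\hat K_T-\hat K_t)$, with $\hat Z=\tilde Z^1-\tilde Z^2$. Hence
$$\E\big[\sup_t|\hat Y_t|^p\big]+\E\Big[\Big(\int_0^T|\hat Z|^2ds\Big)^{p/2}\Big]\le 2^{p}\Big(M\,\E\Big[\Big(\int_0^T|C^1-C^2|ds\Big)^p\Big]+2^p\sup_t|\hat K_t|^p\Big).$$
Combining the two steps with the Hölder bound gives
$$\|\Gamma(y^1,z^1,k^1)-\Gamma(y^2,z^2,k^2)\|^p\le \beta\max\big(T^{p/2},T^p\big)\,\|(y^1,z^1,k^1)-(y^2,z^2,k^2)\|^p ,$$
where $\beta$ depends only on $\lambda,c,C,p$. (Its apparent dependence on $T$ through $M$ is harmless because $T\le\varepsilon$ is bounded; for a constant generator the BSDE estimate is explicit via BDG and Doob, so $M$ depends only on $p$.) Contraction then holds for $T\le\varepsilon$.

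\textbf{Main obstacle.} The delicate point is Step 1. The constraint functions $l^i,r^i$ differ between the two inputs, so the full Proposition \ref{continuous} is needed, including the $\bar L_T\vee\bar R_T$ term. That term must be controlled through the unreflected BSDE difference while keeping constants independent of $T$. One must also check that $l^i,r^i$ satisfy Assumption \ref{asslr}, which follows from Assumption \ref{assLR}, and that $a$ satisfies $l^i(T,a)\le0\le r^i(T,a)$. The latter holds since $\tilde Y^i_T=\xi$ and $\E[L(T,\xi)]\le0\le\E[R(T,\xi)]$.
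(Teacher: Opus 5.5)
Your proof is correct and follows essentially the paper's route. You represent $K^i$ via $\mathbb{BSP}_{l^i}^{r^i}(s^i,\E[\xi])$, control $\sup_t|\hat K_t|$ through Proposition~\ref{continuous} using $\bar L_T\vee\bar R_T\le 2C\sup_t\E|\tilde Y^1_t-\tilde Y^2_t|$, and extract the factor $\max(T^{p/2},T^p)$ by H\"older. The only cosmetic difference is that you bound $(\hat Y,\hat Z)$ through the a priori estimate for the unreflected difference $(\tilde Y^1-\tilde Y^2,\tilde Z^1-\tilde Z^2)$, using $Z^i=\tilde Z^i$; the paper instead applies Doob to the conditional-expectation representation and BDG to the forward identity for $\int_0^t\hat Z_s\,dB_s$, which amounts to the same estimate.
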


\begin{proof}
In this proof,  $\beta$ always represents a positive constant depending on $\lambda,C,c,p$, which may vary from line to line. Let $(Y^i,Z^i,K^i):=(Y^{y^i,z^i,k^i},Z^{y^i,z^i,k^i},K^{y^i,z^i,k^i})$ be the solution to BSDE  with double mean reflections \eqref{yzk} associated with data $(y^i,z^i,k^i)$, $i=1,2$.

Set
\begin{align*}
&\hat{F}_t=F^1_t-F^2_t, \textrm{ where } F=Y,Z,K,y,z,k, \\ &\hat{f}_t=f(t,y^1_t,\P_{y^1_t},z^1_t,\P_{z^1_t},G_t(k^1))-f(t,y^2_t,\P_{y^2_t},z^2_t,\P_{z^2_t},G_t(k^2)).
\end{align*}
It is easy to check that 
\begin{align*}
|\hat{Y}_t|&=\left|\E_t\left[\int_t^T \hat{f}_sds\right]+\hat{K}_T-\hat{K}_t\right|\\
&\leq \lambda \E_t\left[\int_0^T \left(|\hat{y}_s|+|\hat{z}_s|+\E[|\hat{y}_s|]+\E[|\hat{z}_s|]+\sup_{r\in[0,s]}|\hat{k}_r|\right)ds \right]+|\hat{K}_T|+|\hat{K}_t|.
\end{align*}
Applying the Doob's maximal inequality, we obtain that  
\begin{equation}\begin{split}\label{differY}
 \E\left[\sup_{t\in[0,T]}|\hat{Y}_t|^p\right]
 &\leq \beta \E\left[\left(\int_{0}^{T}|\hat{y}_s|+|\hat{z}_s|+\E[|\hat{y}_s|]+\E[|\hat{z}_s|]+\sup_{r\in[0,s]}|\hat{k}_r|ds\right)^p\right]+\beta \sup_{t\in[0,T]}|\hat{K}_t|^p.\\   
\end{split}
\end{equation}
Recalling the proof of Proposition \ref{pro-1} and \eqref{diffK}, we have
\begin{align*}
\sup_{t\in[0,T]}|\hat{K}_t|\leq  \beta \left\{\sup_{t\in[0,T]}|\hat{s}_t|+\sup_{(t,x)\in[0,T]\times\mathbb{R}}|\hat{l}(t,x)|\vee \sup_{(t,x)\in[0,T]\times\mathbb{R}} |\hat{r}(t,x)|\right\},
\end{align*}
where $\hat{s}_t=s^1_t-s^2_t$, $\hat{l}(t,x)=l^1(t,x)-l^2(t,x)$, $\hat{r}(t,x)=r^1(t,x)-r^2(t,x)$ and for $i=1,2$, 
\begin{align*}
&s^i_t=\E\left[\int_0^t f\left(s,y^i_s,\P_{y^i_s},z^i_s,\P_{z^i_s},G_s(k^i)\right)ds\right], \\ 
&l^i(t,x)=\E\left[L\left(t,\widetilde{Y}^i_t-\E[\widetilde{Y}^i_t]+x\right)\right],\\  &r^i(t,x)=\E\left[R\left(t,\widetilde{Y}^i_t-\E[\widetilde{Y}^i_t]+x\right)\right].
\end{align*}
Here, $\widetilde{Y}^i$ is the first component of the solution to the $\text{BSDE}$ with terminal value $\xi$ and constant driver $f^i$, $i=1,2$, where $$f^i_s=f\left(s,y_s^i,\P_{y_s^i},z_s^i,\P_{z^i_s},G_s(k^i)\right).$$ By Assumptions \ref{assLR} and \ref{assgk}, noting that $\widetilde{Y}^i_t=\E_t\left[\xi+\int_t^T f\left(s,y_s^i,\P_{y_s^i},z_s^i,\P_{z^i_s},G_s(k^i)\right)d s\right]$, we obtain that 
\begin{align*}
\left|l^1(t,x)-l^2(t,x)\right|&\leq C\E\left[\left|\left(\widetilde{Y}^1_t-\E[\widetilde{Y}^1_t]\right)-\left(\widetilde{Y}^2_t-\E[\widetilde{Y}^2_t]\right)\right|\right]\leq 2C\E[|\widetilde{Y}^1_t-\widetilde{Y}^2_t|]\\
&\leq \beta \E\left[\int_0^T |\hat{y}_s|+|\hat{z}_s|+\E[|\hat{y}_s|]+\E[|\hat{z}_s|]+\sup_{r\in[0,s]}|\hat{k}_r|d s\right ].
\end{align*}
Similar estimates hold for $|s^1_t-s^2_t|$ and $|r^1(t,x)-r^2(t,x)|$. 
Applying H{\"o}lder inequality, the above analysis yields that 
\begin{align}\label{differK}
\sup_{t\in[0,T]}|\hat{K}_t|^p\leq \beta\E\left[\left(\int_0^T |\hat{y}_s|+|\hat{z}_s|+\E[|\hat{y}_s|]+\E[|\hat{z}_s|]+\sup_{r\in[0,s]}|\hat{k}_r|ds\right)^p \right].
\end{align}
Combining Eqs. \eqref{differY} and \eqref{differK} yields that 
\begin{align*}
\E\left[\sup_{t\in[0,T]}|\hat{Y}_t|^p\right]\leq \beta\E\left[\left(\int_0^T |\hat{y}_s|+|\hat{z}_s|+\E[|\hat{y}_s|]+\E[|\hat{z}_s|]+\sup_{r\in[0,s]}|\hat{k}_r|d s \right)^p\right].
\end{align*}
Note that we have
\begin{align*}
\int_0^t \hat{Z}_s dB_s=\int_0^t\hat{f}_s ds+\hat{Y}_t-\hat{Y}_0+\hat{K}_t.
\end{align*}
Simple calculation yields that
\begin{align*}
\E\left[\sup_{t\in[0,T]}\left|\int_0^t \hat{Z}_sd B_s\right|^p\right]&\leq \beta\E\left[\sup_{t\in[0,T]}\left|\int_0^t\hat{f}_s ds\right|^p+\sup_{t\in[0,T]}|\hat{Y}_t|^p+\sup_{t\in[0,T]}|\hat{K}_t|^p\right]\\
&\leq \beta\E\left[\left(\int_0^T |\hat{y}_s|+|\hat{z}_s|+\E[|\hat{y}_s|]+\E[|\hat{z}_s|]+\sup_{r\in[0,s]}|\hat{k}_r| ds\right)^p\right],
\end{align*}
Combining with the B-D-G inequality yields that 
\begin{align*}
   \E\left[\left(\int_0^T |\hat{Z}_s|^2 d s\right)^{\frac{p}{2}}\right]&\leq \beta\E\left[\left(\int_0^T |\hat{y}_s|+|\hat{z}_s |+\E[|\hat{y}_s|]+\E[|\hat{z}_s|]+\sup_{r\in[0,s]}|\hat{k}_r| ds\right)^p\right].
\end{align*}
By the H\"{o}lder inequality and the Fubini theorem, we finally deduce that 
\begin{align*}
&\E\left[\sup_{s\in[0,T]}|\hat{Y}_s|^p+\left(\int_0^T |\hat{Z}_s|^2 d s\right)^{\frac{p}{2}}+\sup_{t\in[0,T]}|\hat{K}_t|^p\right]\\
\leq &\beta\E\left[\left(\int_0^T \left(|\hat{y}_s|+|\hat{z}_s|+\E[|\hat{y}_s|]+\E[|\hat{z}_s|]+\sup_{r\in[0,s]}|\hat{k}_r|\right)ds\right)^p\right]\\
\leq &\beta\E\left[\left(\int_0^T |\hat{y}_s|+|\hat{z}_s|+\E[|\hat{y}_s|]+\E[|\hat{z}_s|]ds\right)^p+\left(\int_0^T\sup_{r\in[0,s]}|\hat{k}_r|ds\right)^p\right]\\
\leq &\beta\max\left(T^{\frac{p}{2}},T^p\right)\E\left[\sup_{t\in[0,T]}|\hat{y}_t|^p+\left(\int_0^T|\hat{z}_s|^2 ds\right)^{\frac{p}{2}}\right]+ T^p \sup_{r\in[0,T]}|\hat{k}_r|^p\\
\leq &\beta\max\left(T^{\frac{p}{2}},T^p\right)\E\left[\sup_{t\in[0,T]}|\hat{y}_t|^p+\left(\int_0^T|\hat{z}_t|^2 dt\right)^{\frac{p}{2}}+\left(\sup_{t\in[0,T]}|\hat{k}_t|\right)^p\right].
\end{align*}
Therefore, we can choose an appropriate  $T \in(0,\varepsilon]$ such that
$$
\begin{aligned}
  &\left\|Y^{1}-Y^{2}\right\|_{\mathcal{S}^{p}}+\left\|Z^{1}-Z^{2}\right\|_{\mathcal{H}^{p}}+\sup_{t\in[0,T]}\left|K_{t}^{1}-K_{t}^{2}\right|\\
  &\leq \beta\max\left(T^{\frac{p}{2}},T^p\right) \left(\left\|y^1-y^2\right\|_{\mathcal{S}^{p}}+\left\|z^1-z^2\right\|_{\mathcal{H}^{p}}+\sup_{t\in[0,T]}\left|k_t^1-k_t^2\right|\right). 
\end{aligned}
$$
The proof is complete.
\end{proof}

Now,  we are ready to give the proof of Theorem \ref{thlip}. 

\begin{proof}[Proof of Theorem \ref{thlip}] First, for a given $k\in BV[0,T]$, according to Theorem 3.5 in \cite{LS},  the following MFBSDE with double mean reflections
 \begin{equation}\label{MFBSDEDMRkL}
\begin{cases}
Y_t=\xi+\int_t^T f(s,Y_s,\mathbf{P}_{Y_s},Z_s,\mathbf{P}_{Z_s},G_s(k))ds-\int_t^T Z_s dB_s+K_T-K_t, \\
\mathbf{E}[L(t,Y_t)]\leq 0\leq \mathbf{E}[R(t,Y_t)], \\
K_t=K^R_t-K^L_t,\ K^R,K^L\in I[0,T],\\
\int_0^T \mathbf{E}[R(t,Y_t)]dK_t^R=\int_0^T \mathbf{E}[L(t,Y_t)]dK^L_t=0.\\
\end{cases}
\end{equation}
admits a unique solution $(Y,Z,K)\in\mathcal{S}^p \times \mathcal{H}^{p} \times BV[0,T]$. 

Now, let $N$ be the smallest integer such that $N\geq \frac{T}{\varepsilon}$, where $\varepsilon$ is be chosen as Lemma \ref{le3-2}. For $k^i\in BV[0,T],\ i=1,2$, let $(Y^i,Z^i,K^i)$ be the unique solution to \eqref{MFBSDEDMRkL} with data $k^i$. According to Lemma \ref{le3-2}, for any $1\leq i\leq N$, 
$$
\begin{aligned}
  &\left\|Y^{1}-Y^{2}\right\|_{\mathcal{S}^{p}{[(i-1)\varepsilon,i\varepsilon \wedge T]}}+\left\|Z^{1}-Z^{2}\right\|_{\mathcal{H}^{p}[(i-1)\varepsilon,i\varepsilon \wedge T]}+\sup_{t\in[(i-1)\varepsilon,i\varepsilon \wedge T]}\left|K_{t}^{1}-K_{t}^{2}\right|\\
  &\leq \beta\max\left(\varepsilon^{\frac{p}{2}},\varepsilon^p\right) \left(\left\|Y^{1}-Y^{2}\right\|_{\mathcal{S}^{p}[(i-1)\varepsilon,i\varepsilon \wedge T]}+\left\|Z^{1}-Z^{2}\right\|_{\mathcal{H}^{p}[(i-1)\varepsilon,i\varepsilon \wedge T]}+\sup_{t\in[(i-1)\varepsilon,i\varepsilon \wedge T]}\left|k_t^1-k_t^2\right|\right).
\end{aligned}
$$
Therefore, we have 
$$\sup_{t\in[(i-1)\varepsilon,i\varepsilon \wedge T]}\left|K_t^1-K_t^2\right|\leq \max_{1\leq i \leq N}\left(\sup_{t\in[(i-1)\varepsilon,i\varepsilon \wedge T]}\left|K_t^1-K_t^2\right|\right)\leq \beta\max\left(\varepsilon^{\frac{p}{2}},\varepsilon^p\right)\sup_{t\in[0,T]}\left|k_t^1-k_t^2\right|.$$
By the principle of standard contraction mapping, the doubly mean reflected MFBSDE \eqref{nonlinearyz} with resistance  admits a unique solution $(Y,Z,K)\in\mathcal{S}^p \times \mathcal{H}^{p} \times BV[0,T].$   
\end{proof}

\section{Doubly mean reflected MFBSDEs with nonlinear resistance: the quadratic case}	
In this section, we investigate the well-posedness of doubly mean reflected \text{MFBSDE} with nonlinear resistance \eqref{nonlinearyz} when its generator $f$ has quadratic growth  and the terminal value is bounded. More precisely, we propose the  following assumption.
\begin{assumption}\label{assqua}
\begin{itemize}
      \item[(i)]  The terminal value $\xi \in \mathcal{L}^{\infty}_T $ satisfies $\mathbf{E}[L(T,\xi)]\leq 0 \leq \mathbf{E}[R(T,\xi)]$, i.e. there exist some constants $H_1$ such that $\left \| \xi  \right \|_{\mathcal{L}^\infty}\leq H_1$; 
      \item[(ii)] The process $\left\{f\left(t, 0, \delta_0, 0,\delta_0,0 \right)\right\}_{t\in[0,T]}$ is uniformly bounded by some constant $H_2$, $\mathbb{P}$-a.s. There exist some positive constants $\lambda$ and $\alpha\in[0,1)$ such that, $\mathbb{P}$-a.s., for any $t \in[0, T]$, $ y_1, y_2 , k_1, k_2\in \mathbb{R}$, $\mu_1, \mu_2 \in \mathcal{P}_1(\mathbb{R})$, $z_1, z_2 \in \mathbb{R}$ and $\nu_1, \nu_2 \in \mathcal{P}_1(\mathbb{R})$, we have
	$$
    \begin{aligned}
		& \left|f\left(t, y_1, \mu_1, z_1,\nu_1, k_1\right)-f\left(t, y_2, \mu_2, z_2, \nu_2, k_2\right)\right| \\
		\leq &\lambda\left(\left|y_1-y_2\right|+\left(1+\left|z_1\right|+\left|z_2\right|\right)\left|z_1-z_2\right|+\left|k_1-k_2\right|\right)\\
        & +\lambda \mathcal{W}_1\left(\mu_1,\mu_2\right)+\lambda \left(1+\left(\mathcal{W}_1\left(\nu_1,\delta_0\right)\right)^{\alpha}+\left(\mathcal{W}_1\left(\nu_2,\delta_0\right)\right)^{\alpha}\right)\mathcal{W}_1\left(\nu_1,\nu_2\right).
	\end{aligned}
	$$ 
   \end{itemize}
 \end{assumption}
 
\begin{remark}
\begin{itemize}
    \item[(i)] We will take $H=H_1 \vee H_2$ in the following text.
    \item[(ii)]
 Applying the H\"{o}lder inequality, we have $$W_1(\mu,\nu)\leq W_2(\mu,\nu).$$
\end{itemize}
\end{remark}

First, we state the main result in this section.
\begin{theorem}\label{globalMFBSDE}
Let Assumptions \ref{assLR}, \ref{assgk}, \ref{assqua} hold. Suppose that for any $(t,y,\mu,\nu,k)\in [0,T]\times\mathbb{R}\times\mathcal{P}_1(\mathbb{R})\times\mathcal{P}_1(\mathbb{R})\times\mathbb{R}$, there exists a constant $\tilde{H}>0$, such that $\left|f(t,y,\mu,0,\nu,k)\right|\leq \tilde{H}$. Then, for any time horizon $[0,T]$, doubly mean reflected MFBSDE  with nonlinear resistance \eqref{nonlinearyz} has a unique solution $ (Y,Z,K)\in\mathcal{S}^{\infty}\times BMO \times BV[0,T].$     
\end{theorem}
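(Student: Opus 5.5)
We follow the two-step scheme used for Theorem \ref{thlip}. In the first step we freeze the resistance term. In the second step we run a contraction argument in the variable $k\in BV[0,T]$ alone.

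\emph{Step 1: a priori bound.} Fix $k\in BV[0,T]$ and set $\tilde f(s,y,\mu,z,\nu):=f(s,y,\mu,z,\nu,G_s(k))$. This is a quadratic mean-field generator, and the extra hypothesis gives $|\tilde f(s,y,\mu,0,\nu)|\le \tilde H$. We first show that any solution $(Y,Z,K)\in\mathcal{S}^\infty\times BMO\times BV$ of the doubly mean reflected MFBSDE with driver $\tilde f$ satisfies $\|Y\|_{\mathcal{S}^\infty}\le \bar H$. The constant $\bar H$ should depend only on $H,\tilde H,\lambda,c,C,M,T$, and in particular not on $k$.

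To get this we write $Y_t-(K_T-K_t)=\tilde Y_t$, where $\tilde Y$ solves a quadratic BSDE. We linearize in $z$: $\tilde f(s,Y,\mu,Z,\nu)-\tilde f(s,Y,\mu,0,\nu)=\gamma_s Z_s$ with $|\gamma_s|\le\lambda(1+|Z_s|)$. Since $Z\in BMO$, the measure change $d\mathbf Q=\mathcal E(\gamma\cdot B)_T d\P$ gives
\[
|\tilde Y_t|\le H+\tilde H T .
\]
The deterministic part $K$ is bounded through the Skorokhod representation of Theorem \ref{SP} and Remark \ref{rem2.7}. The barriers $\phi,\psi$ solve $l(t,\cdot)=0$ and $r(t,\cdot)=0$ with $l,r$ built as in Proposition \ref{pro-1}, so $|\phi|,|\psi|$ are controlled by $c^{-1}\big(C\|\tilde Y\|_{\infty}+M\big)$ plus $|s|\le \tilde H T$. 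The key point is that $\tilde H$ is uniform in $(y,\mu,\nu,k)$, so no Gronwall in $k$ enters. Hence $\sup_t|K_t|\le \bar H_K$ and $\bar H=H+\tilde HT+2\bar H_K$.

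\emph{Step 2: well-posedness for fixed $k$.} With the bound $\bar H$ in hand, we truncate the generator in $y$ at level $\bar H$. We then use the Picard map $(y,z)\mapsto(Y,Z)$ built through Proposition \ref{pro-1}, adapted to bounded data. Standard quadratic-BSDE BMO estimates, together with the Girsanov argument for the difference and Proposition \ref{continuous} for $\hat K$, give contractivity in $\mathcal S^\infty\times BMO$ on a ball. This holds on a short interval whose length $\delta$ depends only on $\bar H$ and the constants (the $\alpha<1$ power controls the $\mathcal W_1(\nu,\cdot)$ term via $\E|z|\le\|z\|_{BMO}$).

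Since $\bar H$ is uniform, $\delta$ is uniform. We therefore paste backward on $[T-\delta,T],[T-2\delta,T-\delta],\dots$, using $Y_{T-j\delta}$ as the new terminal value. This pasting is legitimate because for fixed $k$ the term $G_s(k)$ is just a given bounded-in-$s$ input. The mean constraint at the junctions holds because the solution on the later interval already satisfies it. We must check that concatenating the local $K$'s gives the global Skorokhod conditions; this follows from uniqueness of the backward Skorokhod problem (Theorem \ref{BSP}) on each piece. Uniqueness for fixed $k$ follows from the same local contraction, again pasted backward.

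\emph{Step 3: contraction in $k$.} For $k^1,k^2$ let $(Y^i,Z^i,K^i)$ be the corresponding solutions, and estimate $\hat K$ via Proposition \ref{continuous}. The drift difference is
\[
\hat f_s=\gamma_s\hat Z_s+a_s\hat Y_s+(\text{mean-field terms})+b_s\big(G_s(k^1)-G_s(k^2)\big),
\]
with $|a|,|b|\le\lambda$ and $|G_s(k^1)-G_s(k^2)|\le\lambda\sup_{r\le s}|\hat k_r|$. Working under $\mathbf Q$ as before, and using the BMO bounds uniform in $k$ (from Step 1) with reverse Hölder, we obtain
\[
\sup_{t\in I}|K^1_t-K^2_t|\le \beta\,|I|\,\sup_{t\le \sup I}|\hat k_t|
\]
on intervals $I$ of length $\varepsilon$, for $\beta$ uniform.

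We then proceed as in the proof of Theorem \ref{thlip}: choose $\varepsilon$ with $\beta\varepsilon<1$ and conclude that $k\mapsto K$ is a contraction on $BV[0,T]$ with the sup-norm. Since $\|K\|$ is uniformly bounded, we take the closure in $C[0,T]$ and recover the BV property from the structure $K=K^R-K^L$. The fixed point gives existence, and uniqueness follows from the contraction.

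\emph{Main obstacle.} We expect the hardest part to be Steps 2–3: getting constants uniform in $k$, and handling the forward dependence of $G_s(k)$ on $k|_{[0,s]}$. That forward dependence clashes with backward pasting, so the sup-norm estimate of $\hat K$ must be global. Our first attempt is an exponentially weighted norm $\sup_t e^{-\theta t}|\hat k_t|$, which should make the contraction hold on all of $[0,T]$ at once.
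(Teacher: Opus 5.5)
Your Steps 1--2 follow the paper's Step 1: the uniform bound as in Lemma \ref{lem4.6}, local solvability as in Theorem \ref{thmbddterminal}, and backward pasting. Your Girsanov bound $|\tilde Y_t|\le H+\tilde H T$ is uniform in $k$, which is cleaner than the paper's $\hat H=\tilde H+\lambda\sup_t|k_t|$. One caution: the local map must solve the quadratic equation in its own $Z$, freezing only $y$, $\mathbf{P}_z$ and $k$ as in Lemma \ref{4.1.1}. If $z$ is also frozen in the $z$-slot, the Lipschitz factor is of order $\sqrt{T}+\|z^1\|_{BMO}+\|z^2\|_{BMO}$, which does not become small with $T$.

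The genuine gap is Step 3. The estimate $\sup_{t\in I}|\hat K_t|\le\beta|I|\sup_{t\le\sup I}|\hat k_t|$ does not hold, for two reasons. First, on a subinterval $I$ the two solutions have different terminal values $Y^1_{\sup I}\neq Y^2_{\sup I}$, produced by $G_s(k^1)-G_s(k^2)$ for $s\in[\sup I,T]$. Second, since $K_0=0$, the value $K_t=\bar K_T-\bar K_{T-t}$ depends on the driver over all of $[0,T]$. So $k\mapsto K$ is non-causal in both directions: forward through $G$, backward through the BSDE and the Skorokhod map. Taking the max over subintervals, borrowed from the proof of Theorem \ref{thlip}, therefore does not globalize. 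Chaining the local estimates propagates the terminal-value errors backward and gives a Lipschitz constant that need not be below $1$. The paper's own Step 2 makes exactly this max-over-subintervals move, so it does not supply the missing idea. Your exponential-weight fallback cannot repair it either: a time weight only exploits one direction of dependence, whereas here $\hat K$ at early times is driven by $\hat k$ at late times.

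In fact no argument can close the gap, because uniqueness fails on a general horizon. Take $T=1$, $\xi=0$, $G_s(K)=K_s$ and $f(t,y,\mu,z,\nu,k)=-2((k\wedge 2)\vee(-2))$. Take $L(t,x)=x-10$ and $R(t,x)=x-l_t$, where $l_t=-0.9-10t+10t^2$ on $[0,0.1]$ and $l_t=-2(1-t)$ on $[0.1,1]$. This $l$ is continuous, satisfies $l\le 0$ and $l_1=0$, and all hypotheses hold with $\lambda=2$, $\tilde H=4$, $c=\tfrac12$, $C=2$.

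Since $l\le 0$, $(Y,Z,K)=(0,0,0)$ is a solution. So is $Z=0$, $K_t=K^R_t=\min(10t,1)$, $K^L=0$, $Y_t=-2\int_t^1K_s\,ds+1-K_t$. A direct computation gives $Y_t=l_t$ for every $t$, so $\mathbf{E}[R(t,Y_t)]\equiv 0$ and both Skorokhod conditions hold. Hence $k\mapsto K$ has two fixed points at sup-distance $1$ and is a contraction in no norm. The same deterministic example also contradicts the uniqueness claimed in Theorem \ref{thlip}.

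What your Steps 1--2 do establish is well-posedness for each frozen $k$ on any horizon. The full statement holds on a horizon short enough for a single-interval contraction with common terminal value $\xi$; your $k$-uniform bound keeps those constants uniform. Globally it can hold only under an additional smallness or monotonicity condition on the $k$-dependence.
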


The construction of the solution to doubly mean reflected MFBSDE with nonlinear resistance needs a contraction mapping argument. Due to the fact that $f$ has quadratic growth in $z$, the coefficient depends on the uniform estimate of $Y$ (see $\tilde{A}$ in Eq. \eqref{contraction}, where $\tilde{A}$ appears in \eqref{BA}). Therefore, we first provide the following result concerning the uniform estimate of the first component of the solution to a doubly mean reflected MFBSDE with an additional bounded assumption for the generator.
\begin{lemma}\label{lem4.6}
Let Assumptions \ref{assLR}, \ref{assgk}, \ref{assqua} hold. Suppose that the generator $f$ does not depend on $k$, and for any $(t,y,\mu,\nu)\in [T-h,T]\times\mathbb{R}\times\mathcal{P}_1(\mathbb{R})\times\mathcal{P}_1(\mathbb{R})$, there exists a constant $\hat{H}$ such that $\left|f(t,y,\mu,0,\nu)\right|\leq \hat{H}$. Moreover, assume that the doubly mean reflected MFBSDE \eqref{nonlinearyz} has a local solution $(Y,Z,K)\in \mathcal{S}^{\infty}[T-h,T]\times BMO[T-h,T]\times BV[T-h,T]$ for some $0<h\leq T$. Then, there exists a constant $\bar{H}$ depending only on $C,c,H,\hat{H},\lambda, T,M$ such that 
$$\left\|Y\right\|_{\mathcal{S}^{\infty}[T-h,T]}\leq \bar{H}.$$
\end{lemma}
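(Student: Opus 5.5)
We plan to split $Y$ into a "BSDE part" and the deterministic reflection part. On $[T-h,T]$ write
\begin{align*}
Y_t=\widetilde{Y}_t+K_T-K_t,\qquad \widetilde{Y}_t=\xi+\int_t^T f(s,Y_s,\P_{Y_s},Z_s,\P_{Z_s})ds-\int_t^T Z_sdB_s .
\end{align*}
Since $K$ is deterministic, the bound splits into an $\mathcal{S}^\infty$ bound for $\widetilde{Y}$ and a bound for $\sup_t|K_T-K_t|$, and these two pieces are handled separately.

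\textbf{Step 1: the BSDE part.} We would use the structure of $f$ at $z=0$. Write
\begin{align*}
f(s,Y_s,\P_{Y_s},Z_s,\P_{Z_s})=f(s,Y_s,\P_{Y_s},0,\P_{Z_s})+\gamma_sZ_s,
\end{align*}
where $|\gamma_s|\le \lambda(1+|Z_s|)$ by Assumption \ref{assqua}(ii) with $z_2=0$ and $\nu_1=\nu_2=\P_{Z_s}$. The first term is bounded by $\hat H$ by hypothesis. Because $Z\in BMO[T-h,T]$, the process $\gamma$ is in $BMO$, so $\mathcal{E}(\int\gamma dB)$ is a true martingale by Kazamaki's criterion. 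Under the equivalent measure $\mathbb{Q}$ we get
\begin{align*}
\widetilde{Y}_t=\E^{\mathbb{Q}}_t\Big[\xi+\int_t^T f(s,Y_s,\P_{Y_s},0,\P_{Z_s})ds\Big],
\end{align*}
hence $\|\widetilde Y\|_{\mathcal{S}^\infty}\le H+\hat H T$. This step does not need a bound on $Y$, which is why the extra boundedness hypothesis on $f(\cdot,0,\cdot)$ is imposed.

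\textbf{Step 2: the reflection part.} By the construction in Proposition \ref{pro-1} and Remark \ref{rem2.7}, $K_T-K_t$ is the compensator of the backward Skorokhod problem $\mathbb{BSP}_l^r(s,a)$. Here
\begin{align*}
l(t,x)=\E\big[L(t,\widetilde Y_t-\E\widetilde Y_t+x)\big],\qquad r(t,x)=\E\big[R(t,\widetilde Y_t-\E\widetilde Y_t+x)\big].
\end{align*}
Time reversal turns this into a forward Skorokhod problem. Theorem \ref{SP} then gives
\begin{align*}
\sup|K|\le \sup|\phi|+\sup|\psi|,
\end{align*}
where $\phi,\psi$ are the roots of $\bar l(t,\bar s_t+x)=0$ and $\bar r(t,\bar s_t+x)=0$. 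Equivalently, one can take the roots $x$ of $\E[L(t,\widetilde Y_t+x)]=0$ and $\E[R(t,\widetilde Y_t+x)]=0$, after which the $\bar s$-shift cancels. Using $c|x|\le |L(t,y+x)-L(t,y)|$ and the sign change, these roots satisfy
\begin{align*}
|x|\le \frac{1}{c}\,\E|L(t,\widetilde Y_t)|\le \frac{1}{c}\big(M+C\|\widetilde Y\|_{\mathcal{S}^\infty}\big),
\end{align*}
by Assumption \ref{assLR}(2),(3), and the same holds for $R$. This yields
\begin{align*}
\sup_t|K_T-K_t|\le \frac{4}{c}\big(M+C(H+\hat HT)\big).
\end{align*}
Combining Step 1 and Step 2 gives $\bar H=(H+\hat HT)+\frac{4}{c}\big(M+C(H+\hat HT)\big)$, which depends only on the allowed constants.

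\textbf{Main obstacle.} The delicate point is Step 2: the bound must be expressed through $\widetilde Y$ rather than $Y$, to avoid circularity. This requires checking that the Skorokhod representation of Proposition \ref{pro-1} also applies with the random generator frozen at the solution, and, on the interval $[T-h,T]$, with time shifted. We would verify directly that the explicit formula for $K$ in Theorem \ref{SP} involves only $\phi,\psi$. Ensuring $\gamma\in BMO$ is immediate from $Z\in BMO$.
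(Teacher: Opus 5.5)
Your proof is correct. It uses the same decomposition $Y_t=\widetilde Y_t+K_T-K_t$ as the paper (the paper's $\bar Y$ is your $\widetilde Y$), but bounds both pieces more directly.

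For the BSDE part, the paper defines $\bar Y$ through an auxiliary quadratic MFBSDE in which $\bar Z$ and $\mathbf{P}_{\bar Z_s}$ are left free. It invokes an external well-posedness theorem for that equation, implicitly using uniqueness to identify $\bar Y$ with $Y-(K_T-K_\cdot)$, and then quotes the Briand--Elie bound $|\bar Y_t|\le (H+\hat HT)e^{\lambda T}$. You define $\widetilde Y$ directly from the given $(Y,Z)$, so no identification is needed. You then prove the bound yourself by linearizing in $z$ with $\nu$ frozen and changing measure, which gives the sharper $H+\hat HT$. You should state explicitly why the integral of $Z$ against the $\mathbb{Q}$-Brownian motion $B-\int_0^\cdot\gamma_s\,ds$ is a true $\mathbb{Q}$-martingale. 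Either Kazamaki's result that BMO is preserved under this change of measure, or localization together with boundedness of $\widetilde Y$, suffices.

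For the reflection part, the paper compares $\bar K$ with the Skorokhod solution $\bar K^0$ driven by $\xi$ alone, via Proposition \ref{continuity}, and bounds $\bar K^0$ through its roots. This gives $\frac{3C}{c}\|\bar Y\|_{\mathcal{S}^\infty}+\frac{3C}{c}H+\frac{M}{c}$. You instead bound the roots attached to $\widetilde Y$ directly. In fact the obstacle you flag disappears. Let $\phi_t,\psi_t$ be the roots of $\mathbf{E}[L(t,\widetilde Y_t+x)]=0$ and $\mathbf{E}[R(t,\widetilde Y_t+x)]=0$. Since $K$ is deterministic and $L(t,\cdot),R(t,\cdot)$ are increasing, the constraint $\mathbf{E}[L(t,\widetilde Y_t+K_T-K_t)]\le 0\le \mathbf{E}[R(t,\widetilde Y_t+K_T-K_t)]$ alone forces $\psi_t\le K_T-K_t\le\phi_t$. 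Hence $|K_T-K_t|\le \frac{1}{c}\bigl(M+C\|\widetilde Y\|_{\mathcal{S}^\infty}\bigr)$, with no Skorokhod representation or flat-off condition needed; your factor $\frac{4}{c}$ is just loose.

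The paper's route reuses the perturbation estimate it needs anyway in Step 1 of Lemma \ref{4.1.1}. Yours is self-contained and gives a cleaner constant that does not depend on $\lambda$.
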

\begin{proof}
 First, according to Theorem 3.11 in \cite{HHTW}, the following  quadratic MFBSDE 
\begin{equation}\label{yz}
 \bar{Y}_{t}=\xi+\int_t^{T} f(s,Y_s,\P_{Y_s},\bar{Z}_{s}, \P_{\bar{Z}_{s}})d s-\int_t^{T} \bar{Z}_{s} dB_s , 
 \end{equation}
 admits a unique solution $(\bar{Y},\bar{Z})\in\mathcal{S}^{\infty}[T-h,T]\times BMO[T-h,T]$. Therefore, similar as the proof of Lemma \ref{4.1.1}, $Y$ has the following representation:
$$Y_{t}=\bar{Y}_{t}+K_{T}-K_{t}=\bar{Y}_{t}+\bar{K}_{T-t},\ \forall t\in[T-h,T],$$
and $\bar{K}$ is the second component of the solution to the Skorokhod problem $\mathbb{SP}_{\bar{l}}^{\bar{r}}(\bar{s})$, which satisfies $K_t=\bar{K}_{h}-\bar{K}_{T-t}$. 
Here for each $t\in[T-h,T]$, 
\begin{equation}\label{salry}\begin{split}
&\bar{s}_t=\E\left[\bar{Y}_{T-t}\right],\\
&\bar{l}(t,x):=\E\left[L(T-t,\bar{Y}_{T-t}-\E\left[\bar{Y}_{T-t}\right]+x)\right],\\
&\bar{r}(t,x):=\E\left[R(T-t,\bar{Y}_{T-t}-\E\left[\bar{Y}_{T-t}\right]+x)\right].\\
\end{split}\end{equation}
Furthermore, 
\begin{equation}\begin{split}
\left\|Y\right\|_{\mathcal{S}^{\infty}[T-h,T]}&\leq\left\|\bar{Y}\right\|_{\mathcal{S}^{\infty}[T-h,T]}+\sup_{t\in[T-h,T]}\left|\bar{K}_{T-t}\right|\\
&\leq\left\|\bar{Y}\right\|_{\mathcal{S}^{\infty}[T-h,T]}+\sup_{t\in[T-h,T]}\left|\bar{K}_{T-t}-\bar{K}^0_{T-t}\right|+\sup_{t\in[T-h,T]}\left|\bar{K}^0_{T-t}\right|,\\
\end{split}   
\end{equation}
where $\bar{K}^0$ is the second component of the solution to the Skorokhod problem $\mathbb{SP}^{\bar{r}^0}_{\bar{l}^0}(\bar{s}^0)$ and $\bar{s}^0$, $\bar{r}^0$, $\bar{l}^0$ are the same as in  \eqref{slr}. In view of Proposition \ref{continuity}, we derive that 
$$
\begin{aligned}
\sup_{t \in[T-h, T]}\left|\bar{K}_{T-t}-\bar{K}^0_{T-t}\right|&=\sup_{t \in[0, h]}\left|\bar{K}_{t}-\bar{K}^0_{t}\right|\\
&\leq \frac{3C}{c}\sup_{t \in[T-h, T]}\mathbf{E}\left[\left|\bar{Y}_{t}\right|\right]+\frac{C}{c}\mathbf{E}\left[\left|\xi\right|\right]\\
&\leq \frac{3C}{c}\left\|\bar{Y}\right\|_{\mathcal{S}^{\infty}[T-h, T]}+\frac{C}{c}H.\\  
\end{aligned}		
$$
Recalling \eqref{barK0}, we have
\begin{equation*}
 \sup_{t\in[T-h,T]}\left|\bar{K}^0_{T-t}\right|\leq \sup_{t\in[0,T]}\left|\bar{K}^0_{t}\right|\leq \frac{2C}{c}\mathbf{E}[\left|\xi\right|]+\frac{M}{c}\leq \frac{2C}{c}H+\frac{M}{c}.   
\end{equation*}
According to Proposition 2.2 in \cite{BE}, it is easy to check that for each $t\in[T-h,T]$,
$$\left|\bar{Y}_t\right|\leq(H+\hat{H}T)e^{\lambda T}:=\bar{H}^1.$$
Therefore,
\begin{equation}\label{barH}
\left\|Y\right\|_{\mathcal{S}^{\infty}[T-h,T]}\leq(1+\frac{3C}{c})\bar{H}^1+\frac{3C}{c}H+\frac{M}{c}:=\bar{H}, \ \forall t\in[T-h,T],   
\end{equation}
which is the desired result.
\end{proof}

Next, when $T$ is sufficiently small, we show that the quadratic MFBSDE with double mean reflections and nonlinear resistance \eqref{nonlinearyz} admits a unique solution  by utilizing a contraction mapping method. For this purpose, given $
\left(y, z, k\right)\in \mathcal{S}^{\infty} \times BMO\times BV[0,T]$, consider the following quadratic MFBSDE with double mean reflections  
\begin{equation}\label{nonlinearyz-1}
\begin{cases}
Y_t^{y,z,k}=\xi+\int_t^T f(s,y_s,\P_{y_s},Z_s^{y,z,k},\P_{z_s},G_s(k))ds-\int_t^T Z_s^{y,z,k} dB_s+K_T^{y,z,k}-K_t^{y,z,k},\\
\E[L(t,Y_t^{y,z,k})]\leq 0\leq \E[R(t,Y_t^{y,z,k})], \\
K_t^{y,z,k}=K^{y,z,k,R}_t-K^{y,z,k,L}_t,\ K^{y,z,k,R},K^{y,z,k,L}\in I[0,T],\\
\int_0^T \E[R(t,Y_t)]dK^{y,z,k,R}_t=\int_0^T \E[L(t,Y_t)]dK^{y,z,k,L}_t=0.
\end{cases}
\end{equation}  
Then, under Assumptions \ref{assLR}, \ref{assgk}, \ref{assqua}, it follows from Theorem 4.2 in \cite{LS} that the above equation has a unique solution $\left(Y^{y,z,k}, Z^{y,z,k}, K^{y,z,k}\right)\in \mathcal{S}^{\infty} \times BMO\times BV[0,T]$.     
Now, we define the map 
\begin{align*}
\Gamma:\mathcal{S}^{\infty} \times BMO\times BV[0,T]&\rightarrow \mathcal{S}^{\infty} \times BMO \times BV[0,T],  \\
\Gamma\left(y,z,k\right)&:=\left(Y^{y,z,k}, Z^{y,z,k}, K^{y,z,k}\right).
\end{align*}
We set 
\begin{equation}\label{BA}
    \mathscr{B}_{\tilde{A}} := \left\{ \left(y, z, k\right)\in \mathcal{S}^{\infty} \times BMO \times BV[0,T]:\left \| y \right \|_{\mathcal{S}^{\infty}}\leq \tilde{A},  \left \| z \right \|_{BMO}\leq \tilde{A}, \sup_{0 \leq t \leq T}\left|k_t\right|\leq \tilde{A}\right\},
\end{equation}
where $\tilde{A} \geq \tilde{A}_0$ and
\begin{equation}\label{A0}
 \tilde{A}_0=(3+\frac{24C}{c})H+\frac{2M}{c}+(\frac{2H}{\lambda}+\frac{1}{3})e^{9\lambda H}.   
\end{equation}

	\begin{theorem}\label{thmbddterminal}
	  Let Assumptions \ref{assLR}, \ref{assgk}, \ref{assqua} hold. Then, there exists a constant $\hat{\delta}^{\tilde{A}}:=\hat{\delta}^{\tilde{A}}(\tilde{A}, c, C, H, \lambda)>0$ such that for any $T\in(0,\hat{\delta}^{\tilde{A}}]$, the quadratic MFBSDE \eqref{nonlinearyz} with double mean reflections and nonlinear resistance  admits a unique solution $(Y, Z, K) \in \mathcal{S}^{\infty} \times BMO \times BV[0,T]$. Moreover, we have  
     $$\left \| Y \right \|_{\mathcal{S}^{\infty}}\leq \tilde{A}, \  \left \| Z \right \|_{BMO}\leq \tilde{A} \text{ and } \sup_{0 \leq t \leq T}\left|K_t\right|\leq \tilde{A}.$$
	\end{theorem}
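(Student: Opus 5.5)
The proof is a contraction mapping argument for $\Gamma$ on the ball $\mathscr{B}_{\tilde{A}}$. There are two steps: first show that $\Gamma$ maps $\mathscr{B}_{\tilde{A}}$ into itself when $T$ is small, and then show that $\Gamma$ is a strict contraction there in a suitable metric. For the metric I take
\[
d\big((y^1,z^1,k^1),(y^2,z^2,k^2)\big)=\|\hat y\|_{\mathcal{S}^\infty}+\|\hat z\|_{BMO}+\sup_{t\in[0,T]}|\hat k_t| .
\]
The ball $\mathscr{B}_{\tilde{A}}$ is complete under $d$, because BMO balls are closed and the $\sup$-norm limits of $BV$ functions stay in $C[0,T]$. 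One point needs care: the limit might lose bounded variation. I handle this by working in $C[0,T]$ with $k_0=0$. This is harmless, since $G$ only uses Lipschitz dependence in the sup norm, and the image $K^{y,z,k}$ is automatically in $BV$. Once the contraction is established, Banach's fixed point theorem gives existence and uniqueness in $\mathscr{B}_{\tilde{A}}$, together with the stated bounds.

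\textbf{Invariance.} Fix $(y,z,k)\in\mathscr{B}_{\tilde{A}}$, and let $(\bar Y,\bar Z)$ solve the quadratic BSDE without reflection, with terminal value $\xi$ and driver $f(s,y_s,\P_{y_s},\cdot,\P_{z_s},G_s(k))$. As in the proof of Proposition \ref{pro-1} and Lemma \ref{lem4.6}, we have $Y=\bar Y+K_T-K_\cdot$, where $K$ comes from the backward Skorokhod problem with data $\bar s_t=\E[\bar Y_{T-t}]$ and the loss functions $\bar l,\bar r$ built from $\bar Y$ as in \eqref{salry}.

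To bound $\bar Y$, I linearize the $z$-dependence through the quadratic structure. The driver satisfies
\[
|f(s,y,\mu,z,\nu,k)|\le H+\lambda\big(|y|+\E|y|+|k|+(1+2\tilde A^{\alpha})\tilde A\big)+\lambda(|z|+|z|^2).
\]
With this growth, the standard exponential transform (Proposition 2.2 in \cite{BE} type) gives $\|\bar Y\|_{\infty}\le H+T\,\beta(\tilde A)$ and $\|\bar Z\|_{BMO}^2\le \beta e^{c\lambda H}(\ldots)+T\beta(\tilde A)$. Proposition \ref{continuity} then gives
\[
\sup|K|\le \tfrac{2C}{c}H+\tfrac{M}{c}+\tfrac{C}{c}\,\big(\text{terms } \le 3\|\bar Y\|_\infty\big).
\]
These are exactly the constants appearing in $\tilde A_0$ of \eqref{A0}. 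The parts of the bounds that do not depend on $T$ are controlled by $\tilde A_0$, and the parts depending on $\tilde A$ carry a factor of $T$. Choosing $T\le\hat\delta$ small therefore keeps $\|Y\|_{\mathcal{S}^\infty},\|Z\|_{BMO},\sup|K|\le\tilde A$.

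\textbf{Contraction.} For two inputs in the ball, write $\hat Y=\hat{\bar Y}+\hat K_T-\hat K_\cdot$. The difference $\hat{\bar Y}$ solves a BSDE whose $z$-difference is linear:
\[
\big(f(\ldots,\bar Z^1,\ldots)-f(\ldots,\bar Z^2,\ldots)\big)=\gamma_s\hat{\bar Z}_s,\qquad |\gamma_s|\le\lambda(1+|\bar Z^1_s|+|\bar Z^2_s|).
\]
Since $\gamma\in BMO$ with norm controlled by $\tilde A$, Girsanov applies under $\tilde\P$. The remaining driver difference is bounded by
\[
\lambda\big(|\hat y|+\E|\hat y|+\lambda\sup|\hat k|+(1+2\tilde A^\alpha)\E|\hat z_s|\big).
\]
Integrating over $[t,T]$ and taking $\tilde\E_t$ produces a factor $T$ for the $\hat y,\hat k$ terms. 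For the $\hat z$ term I use $\int_t^T\E|\hat z_s|ds\le \sqrt{T}\,(\E\int_0^T|\hat z|^2)^{1/2}\le\sqrt T\|\hat z\|_{BMO}$, which gives a factor $\sqrt T$.

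The BMO norm of $\hat{\bar Z}$ is estimated by applying Itô's formula to $|\hat{\bar Y}|^2$ under $\tilde\P$ and then using the equivalence of BMO norms under $\P$ and $\tilde\P$ (Kazamaki), with constants depending on $\tilde A$. For $\hat K$, Proposition \ref{continuity} gives $\sup|\hat K|\le\beta\sup_t\E|\hat{\bar Y}_t|$.

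Collecting the estimates, I obtain $d(\Gamma(\cdot),\Gamma(\cdot))\le\beta(\tilde A)\sqrt{T}\,d(\cdot,\cdot)$ for $T\le1$. Choosing $\hat\delta^{\tilde A}$ so that $\beta(\tilde A)\sqrt{\hat\delta^{\tilde A}}<1$, and also small enough for the invariance step, completes the proof.

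\textbf{Main obstacle.} I expect the main difficulty to be the BMO estimate of $\hat{\bar Z}$ in the contraction step. Because $f$ is quadratic in $z$, the change-of-measure constants depend on $\tilde A$, and one has to check that every constant is explicit in $\tilde A$ while a positive power of $T$ multiplies the whole right-hand side. My approach is to work on $\hat{\bar Y}$ under $\tilde\P$, where the driver is Lipschitz, and transfer back to $\P$ only at the end.
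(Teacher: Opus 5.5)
Your architecture is the paper's. You write $Y=\bar Y+K_T-K_\cdot$ with $K$ coming from the backward Skorokhod problem. You then show $\Gamma(\mathscr{B}_{\tilde A})\subset\mathscr{B}_{\tilde A}$ for small $T$. Finally you prove a contraction by combining a Girsanov bound on $\hat{\bar Y}$ with the Skorokhod continuity estimate for $\hat K$.

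However, one step fails as written. You treat $\mathcal{W}_1(\mathbf{P}_{z_s},\delta_0)=\mathbf{E}|z_s|$ as bounded by $\tilde A$ for every $s$. You do this in the invariance bound (the constant $(1+2\tilde A^{\alpha})\tilde A$ in your driver estimate) and in the contraction bound (the coefficient $(1+2\tilde A^{\alpha})$ in front of $\mathbf{E}|\hat z_s|$). The condition $\|z\|_{BMO}\le\tilde A$ controls only time-integrated conditional second moments, not $\mathbf{E}|z_s|$ at a fixed $s$. For example, the deterministic process $z_s=(T-s)^{-1/4}$ has $\|z\|_{BMO}^2=2\sqrt{T}$, and any rescaling of it still lies in the ball, yet $\mathbf{E}|z_s|\to\infty$ as $s\to T$. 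So the rates you claim, $\|\bar Y\|_{\mathcal{S}^\infty}\le H+T\beta(\tilde A)$ and a contraction factor $\beta(\tilde A)\sqrt{T}$, are not justified. A symptom is that your argument never uses $\alpha<1$. That hypothesis is exactly what makes the mean-field $z$-term small for small $T$.

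The fix is the one the paper uses, through the estimates imported from \cite{LP}. Keep the deterministic weight inside the time integral, apply H\"older in $s$, and use $\int_0^T\mathbf{E}|z_s|^2ds\le\|z\|_{BMO}^2$. This gives $\int_t^T(\mathbf{E}|z_s|)^{1+\alpha}ds\le T^{\frac{1-\alpha}{2}}\tilde A^{1+\alpha}$ for the invariance step. For the contraction it gives $\int_t^T(\mathbf{E}|z^i_s|)^{\alpha}\,\mathbf{E}|\hat z_s|\,ds\le T^{\frac{1-\alpha}{2}}\tilde A^{\alpha}\|\hat z\|_{BMO}$. These are the sources of the terms $\lambda\sqrt{T}\tilde A+\lambda T^{\frac{1-\alpha}{2}}\tilde A^{1+\alpha}$ in \eqref{barYZ} and of the factor $\sqrt{3T+6T^{1-\alpha}\tilde A^{2\alpha}}$ in \eqref{contraction}. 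Since $\alpha<1$, these still vanish as $T\to0$. After this correction the contraction factor is of order $\beta(\tilde A)T^{\frac{1-\alpha}{2}}$ for $T\le1$, and your argument closes.

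The remaining differences are minor:
\begin{itemize}
\item You apply Banach's theorem on a complete ball, taking the $k$-component in $\{k\in C[0,T]:k_0=0\}$ and extending $G$ by Lipschitz continuity. This is cleaner than the paper's Picard iteration followed by a separate check of the flat-off conditions via Proposition \ref{continuous}, because a fixed point of $\Gamma$ is automatically a solution. It also handles a completeness issue the paper passes over.
\item For $\|\hat Z\|_{BMO}$, the paper bounds it directly under $\mathbf{P}$ in terms of $\|\hat{\bar Y}\|_{\mathcal{S}^\infty}$, with constant $2\lambda\sqrt{1+12\lambda^2+24\lambda^2\tilde A^2}$. It does not pass through Kazamaki's norm equivalence, which keeps the constants explicit.
\item As in the paper, the contraction gives uniqueness only within $\mathscr{B}_{\tilde A}$.
\end{itemize}
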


The proof of Theorem \ref{thmbddterminal} needs the following lemma. 
\begin{lemma}\label{4.1.1}
Let Assumptions \ref{assLR}, \ref{assgk}, \ref{assqua} hold. Then, there exists a constant $\delta^{\tilde{A}}>0$ depending only on $\tilde{A}, H, \lambda, C, c$ such that for any $T\in(0,\delta^{\tilde{A}}]$, $\Gamma(\mathscr{B}_{\tilde{A}})\subset \mathscr{B}_{\tilde{A}}$. Furthermore, for any $T\in(0,\hat{\delta}^{\tilde{A
}}]$, $\Gamma$ is contractive.
\end{lemma}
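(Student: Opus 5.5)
The plan is to split the statement into its two parts: first the invariance $\Gamma(\mathscr{B}_{\tilde A})\subset\mathscr{B}_{\tilde A}$ for small $T$, then the contraction estimate.

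\textbf{Invariance.} Fix $(y,z,k)\in\mathscr{B}_{\tilde A}$ and write $Y=Y^{y,z,k}$, and so on. As in the proof of Proposition \ref{pro-1}, I represent the solution as $Y_t=\bar Y_t+K_T-K_t$. Here $(\bar Y,\bar Z)$ solves the quadratic BSDE without reflection,
$$\bar Y_t=\xi+\int_t^T f(s,y_s,\P_{y_s},\bar Z_s,\P_{z_s},G_s(k))ds-\int_t^T\bar Z_sdB_s,$$
and $K$ comes from the backward Skorokhod problem with the functions $l,r$ built from $\bar Y$. By Remark \ref{rem2.7}, $K_T-K_t=\bar K_{T-t}$, where $\bar K$ solves $\mathbb{SP}^{\bar r}_{\bar l}(\bar s)$ with $\bar s,\bar l,\bar r$ as in \eqref{salry}. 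I compare $\bar K$ with the reference solution $\bar K^0$, obtained by replacing $\bar Y$ by $\xi$. Proposition \ref{continuity} and Theorem \ref{SP} then give
$$\sup_t|K_t|\le 2\sup_t|\bar K_t|\le \beta\left(\|\bar Y\|_{\mathcal S^\infty}+H\right)+\frac{2M}{c}.$$
For $\bar Y$, Assumption \ref{assqua} and $G_s(0)=0$ give a generator whose value at $z=0$ is bounded by
$$H+\lambda\left(\tilde A+\tilde A+\lambda\tilde A\right)+\lambda\left(1+\tilde A^\alpha\right)\tilde A,$$
with quadratic growth in $\bar Z$. The standard exponential-transform estimate (Briand--Elie, Proposition 2.2 in \cite{BE}) then yields
$$\|\bar Y\|_{\mathcal S^\infty}\le \left(H+T\,\Phi(\tilde A)\right)e^{\lambda T}$$
for some function $\Phi$. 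The BMO bound follows from Itô's formula applied to $e^{\gamma\bar Y}$ with $\gamma\sim 3\lambda$. This gives
$$\|\bar Z\|^2_{BMO}\le \frac{1}{\gamma^2}e^{c\gamma\|\bar Y\|_\infty}\left(1+T\Phi(\tilde A)\right),$$
which is the origin of the term $(\frac{2H}{\lambda}+\frac13)e^{9\lambda H}$ in $\tilde A_0$. Since $Z=\bar Z$, for $T$ small all three quantities are below constants built from $H$, $M/c$ and $C/c$ plus $O(T)$ terms. The constant $\tilde A_0$ in \eqref{A0} is chosen so that these bounds are at most $\tilde A$ once $T\le\delta^{\tilde A}$.

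\textbf{Contraction.} Take two inputs $(y^i,z^i,k^i)\in\mathscr B_{\tilde A}$ and set $\hat Y=Y^1-Y^2$, and so on. Linearize the $Z$-dependence with
$$\beta_s=\frac{f(\cdot,Z^1_s,\ldots)-f(\cdot,Z^2_s,\ldots)}{\hat Z_s}\mathbf 1_{\{\hat Z_s\ne0\}},\qquad |\beta_s|\le\lambda\left(1+|Z^1_s|+|Z^2_s|\right),$$
so that $\beta\in BMO$ with norm controlled by $\tilde A$. Under the Girsanov measure $\Q$ defined by $\mathcal E(\beta\cdot B)$, $\hat{\bar Y}$ is the conditional expectation of $\int_t^T\delta f_s\,ds$. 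The term $\delta f_s$ is bounded by
$$\lambda\left(|\hat y_s|+\E|\hat y_s|+\left(1+2\tilde A^\alpha\right)\E|\hat z_s|+\lambda\sup_{r\le s}|\hat k_r|\right).$$
Hence $\|\hat{\bar Y}\|_\infty$ is at most
$$\beta(\tilde A)\left(T\|\hat y\|_\infty+T\lambda\sup|\hat k|+\sqrt T\,\|\hat z\|_{BMO}\right),$$
using $\E|\hat z_s|$ integrated in time together with Cauchy--Schwarz against the BMO norm. For $\hat{\bar Z}$, I apply Itô's formula to $|\hat{\bar Y}|^2$ under $\Q$. The reverse Hölder and energy inequalities for BMO martingales (Kazamaki) convert $\Q$-BMO norms to $\P$-BMO norms with constants depending on $\tilde A$. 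The estimate for $\hat K$ follows from Proposition \ref{continuous} exactly as in Lemma \ref{le3-2}: $\sup|\hat K|\le\beta\sup_t\E|\hat{\bar Y}_t|$. Collecting these gives
$$\|\hat Y\|_{\mathcal S^\infty}+\|\hat Z\|_{BMO}+\sup|\hat K|\le\beta(\tilde A)\sqrt T\left(\|\hat y\|+\|\hat z\|+\sup|\hat k|\right),$$
and the contraction follows once $T\le\hat\delta^{\tilde A}\le\delta^{\tilde A}$.

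\textbf{Main obstacle.} I expect the hardest step to be the BMO estimate for $\hat Z$. The change of measure introduces constants depending on $\|\beta\|_{BMO}$, and so on $\tilde A$. One must check that these remain independent of $T$, so that the factor $\sqrt T$ still yields a contraction. To avoid the issue, I would first try the direct $\P$-Itô approach, bounding the cross term $|\hat Z|(|Z^1|+|Z^2|)|\hat{\bar Y}|$ by $\|\hat{\bar Y}\|_\infty$ times a BMO product.
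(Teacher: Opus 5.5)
Your route is the paper's: decompose $Y=\bar Y+K_T-K_\cdot$ with $\bar Y$ an unreflected quadratic BSDE, control $K$ via $\bar K^0$ and Propositions \ref{continuity} and \ref{continuous}, and combine this with a priori quadratic-BSDE estimates for $\bar Y$, $\bar Z$ and their differences. The paper does not rederive those estimates; it quotes them from \cite{LP} (Eqs. (13), (14), (23), (24) there). They come from the exponential transform and a direct $\P$-It\^o argument on $|\hat{\bar Y}|^2$, which is also the fallback you propose, so your Girsanov worry does not arise.

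One step, however, is false as written. You bound $W_1(\P_{z_s},\delta_0)=\E|z_s|$ by $\tilde A$, and $(\E|z^i_s|)^\alpha$ by $\tilde A^\alpha$, for every $s$. A bound $\|z\|_{BMO}\le\tilde A$ gives no pointwise-in-time control of $\E|z_s|$. It only gives $\int_0^T(\E|z_s|)^2ds\le\E\int_0^T|z_s|^2ds\le\tilde A^2$. So the driver at $z=0$ is not bounded by a constant. It is bounded by $\eta_s:=H+\lambda(2+\lambda)\tilde A+\lambda\bigl(1+(\E|z_s|)^\alpha\bigr)\E|z_s|$, which is deterministic in its $z$-part but only integrable in $s$. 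The exponential-transform (or linearization) bound must therefore be applied in integrated form, giving $\|\bar Y\|_{\mathcal S^\infty}\lesssim H+\int_0^T\eta_s\,ds$.

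The repair is H\"older in time:
\begin{itemize}
\item $\int_0^T\E|z_s|\,ds\le\sqrt T\,\tilde A$;
\item $\int_0^T(\E|z_s|)^{1+\alpha}ds\le T^{\frac{1-\alpha}{2}}\tilde A^{1+\alpha}$;
\item in the contraction step, $\int_0^T\bigl(1+(\E|z^1_s|)^\alpha+(\E|z^2_s|)^\alpha\bigr)\E|\hat z_s|\,ds\le\sqrt{3T+6T^{1-\alpha}\tilde A^{2\alpha}}\,\|\hat z\|_{BMO}$.
\end{itemize}
The correct small-time factors are therefore $\sqrt T$ and $T^{\frac{1-\alpha}{2}}$, not $T\Phi(\tilde A)$ and $\sqrt T$ as you state. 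Since $\alpha<1$ they still vanish as $T\to0$, so invariance and contraction both survive. This is exactly why $\delta^{\tilde A}$ and $\hat\delta^{\tilde A}$ contain the powers $\frac{2}{1-\alpha}$ and $\frac{1}{1-\alpha}$. With this correction your argument coincides with the paper's.
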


\begin{proof}
\textbf{Step 1.} We first show that $\Gamma(\mathscr{B}_{\tilde{A}})\subset \mathscr{B}_{\tilde{A}}$. For notional simplification, set
$$Y_t:=Y^{y,z,k}_t, Z_t:=Z^{y,z,k}_t,K:=K^{y,z,k}_t,f^{y,z,k}(t,z):=f(t,y_t,\P_{y_t},z,\P_{z_{t}},G_{t}(k)).$$
Similar with the proof of Proposition \ref{pro-1}, we have 
 \begin{equation}\label{YmZm}
   Y_{t}=\bar{Y}_{t}+K_{T}-K_{t},\ Z_t=\bar{Z}_{t},
 \end{equation}
 where $(\bar{Y},\bar{Z})\in \mathcal{S}^{\infty}\times \text{BMO}$ is the unique solution to quadratic BSDE with terminal condition $\xi$ and the generator $f^{y,z,k}$, and $K$ is the second component of the solution to the backward Skorokhod problem $\mathbb{BSP}_{l}^{r}(s,a)$. Here, for each $t\in[0,T]$,  
\begin{equation}\begin{split}
&s_t=\E\left[\bar{Y}_{0}-\bar{Y}_{t}\right],\ a=\mathbf{E}[\xi],\\
&l(t,x)=\E\left[L(t,\bar{Y}_{t}-\E\left[\bar{Y}_{t}\right]+x)\right],\\
&r(t,x)=\E\left[R(t,\bar{Y}_{t}-\E\left[\bar{Y}_{t}\right]+x)\right].\\
\end{split}\end{equation}
In addition, according to Proposition 4.2 in \cite{L} and Remark \ref{rem2.7}, we know that the solution $K$ of \eqref{nonlinearyz-1} has the following representation 
\begin{equation}\label{kbark}
K_t=\bar{K}_{T}-\bar{K}_{T-t},
\end{equation}
where $\bar{K}$ is the second component of the solution to the Skorokhod problem $\mathbb{SP}_{\bar{l}}^{\bar{r}}(\bar{s})$.
Here for each $t\in[0,T]$,  
\begin{equation}\label{salr}\begin{split}
\bar{s}_t&=\E\left[\bar{Y}_{T-t}\right],\\
\bar{l}(t,x)&=\E\left[L(T-t,\bar{Y}_{T-t}-\E\left[\bar{Y}_{T-t}\right]+x)\right],\\
\bar{r}(t,x)&=\E\left[R({T-t},\bar{Y}_{T-t}-\E\left[\bar{Y}_{T-t}\right]+x)\right].
\end{split}\end{equation}
Combining Eqs. \eqref{YmZm} and \eqref{kbark}, we obtain that  
$$
Y_t=\bar{Y}_t+K_T-K_t=\bar{Y}_t+\bar{K}_{T-t},  \ t\in[0,T].
$$
Consequently, we have
\begin{equation}\label{re-1}
\begin{split}
 \left \| Y \right \|_{\mathcal{S}^{\infty}}&\leq \left \| \bar{Y} \right \|_{\mathcal{S}^{\infty}}+\sup_{t\in[0,T]}\left|\bar{K}_t\right|\\
 &\leq \left \| \bar{Y} \right \|_{\mathcal{S}^{\infty}}+\sup_{t\in[0,T]}\left|\bar{K}_t-\bar{K}^0_t\right|+\sup_{t\in[0,T]}\left|\bar{K}^0_t\right|,   
\end{split}   
\end{equation}
where $\bar{K}^0$ is the second component of the solution to the Skorokhod problem $\mathbb{SP}^{\bar{r}^0}_{\bar{l}^0}(\bar{s}^0)$. Here, for each $t\in[0,T]$, 
\begin{equation}\label{slr}
\bar{s}^0_t=\E[\xi],\ \bar{l}^0(t,x)=\E\left[L(T-t,x)\right], \ \bar{r}^0(t,x)=\E\left[R(T-t,x)\right].
\end{equation}
Let $\phi_t,\psi_t$ be two constants satisfying the following equations, respectively,
\begin{equation*}\begin{split}
&\bar{l}^0(t,\bar{s}_t^0+ \phi_t)=0, \ \bar{r}^0(t,\bar{s}_t^0+ \psi_t)=0.\\
\end{split}\end{equation*} 
By Proposition \ref{SP}, we have 
\begin{equation}\label{barK0}\begin{split}
\sup_{t\in[0,T]}\left|\bar{K}^{0}_{t}\right|
&\leq \sup_{t\in[0,T]}\left|\phi_{t}\right|+\sup_{t\in[0,T]}\left|\psi_{t}\right|\\
&\leq \frac{1}{c} \sup_{t\in[0,T]}\left|\bar{l}^0(t,\E[\xi]+ \phi_t)-\bar{l}^0(t,\E[\xi])\right|+ \frac{1}{c} \sup_{t\in[0,T]}\left|\bar{r}^0(t,\E[\xi]+ \psi_t)-\bar{r}^0(t,\E[\xi])\right|\\
&\leq \frac{1}{c} \sup_{t\in[0,T]}\left|\E[L(T-t, \E[\xi])-L(T-t,0)]\right|+ \frac{1}{c} \sup_{t\in[0,T]}\left|\E[L(T-t,0)]\right|\\
&\quad+\frac{1}{c} \sup_{t\in[0,T]}\left|\E[R(T-t, \E[\xi])-L(T-t,0)]\right|+ \frac{1}{c} \sup_{t\in[0,T]}\left|\E[R(T-t,0)]\right|\\
&\leq \frac{2C}{c}\mathbf{E}[\left|\xi\right|]+\frac{M}{c}.\\
\end{split}\end{equation}
 In view of Proposition \ref{continuity}, it is easy to check that 
\begin{equation}\label{barKK0}
	\sup_{t \in[0, T]}\left|\bar{K}_{t}-\bar{K}^0_{t}\right|
	\leq \frac{3C}{c}\sup_{t \in[0, T]}\mathbf{E}[|\bar{Y}_t|]+\frac{C}{c}\E[|\xi|].
\end{equation}
Let us recall the following estimates, which can be found in the proof of Lemma 2.10 in \cite{LP} (see Eqs. (13) and (14) in \cite{LP})  
\begin{equation}\label{barYZ}\begin{split}
\left\|\bar{Y}\right\|_{\mathcal{S}^{\infty}}
&\leq (1+T)H+3\lambda T \tilde{A}+\lambda\sqrt{T} \tilde{A}+\lambda T^{\frac{1-\alpha}{2}} \tilde{A}^{1+\alpha},\\
\left\|Z\right\|_{BMO}^2
&\leq \frac{2 e^{3\lambda\left\|\bar{Y}\right\|_{\mathcal{S}^{\infty}}}}{3}\left(\frac{(1+T)H}{\lambda}+3 T \tilde{A}+\frac{T}{2} +\sqrt{T}\tilde{A}+T^{\frac{1-\alpha}{2}}\tilde{A}^{1+\alpha}\right).
\end{split}\end{equation}

Define 
$$\delta^{\tilde{A}}:=\min\left(\frac{H}{9\lambda \tilde{A}},\frac{H^2}{9\lambda^2 \tilde{A}^2}, \left(\frac{H}{3\lambda \tilde{A}^{1+\alpha}}\right)^{\frac{2}{1-\alpha}}\right).$$
Therefore, we  can derive that for each $T\in(0,\delta^{\tilde{A}}]$, 
$$
\begin{aligned}
 &\left \| \bar{Y} \right \|_{\mathcal{S}^{\infty}}\leq 3H,\ \ \ \ \left \| Z \right \|_{BMO} \leq \sqrt{\left(\frac{2H}{\lambda}+\frac{1}{3}\right)e^{9\lambda H}}\leq \tilde{A},\\
 &\left \| Y \right \|_{\mathcal{S}^{\infty}}\leq \left(1+\frac{4C}{c}\right)3H+\frac{M}{c}\leq \tilde{A}.
\end{aligned} 
$$
Meanwhile, recalling \eqref{barK0} and \eqref{barKK0}, we have 
$$
\begin{aligned}
 \sup_{t\in[0,T]}\left|K_t\right|&=\sup_{t\in[0,T]}\left|\bar{K}_T-\bar{K}_{T-t}\right|\leq2\sup_{t\in[0,T]}\left|\bar{K}_t\right|\\
 &\leq2\sup_{t \in[0, T]}\left|\bar{K}_{t}-\bar{K}^0_{t}\right|+2\sup_{t \in[0, T]}\left|\bar{K}^0_{t}\right|\\
 &\leq \frac{24C}{c}H+\frac{2M}{c}\leq \tilde{A}.\\   
\end{aligned} 
$$

\textbf{Step 2.} Now, we show that $\Gamma$ is a contraction mapping.  For each $i=1,2$, we denote 
$$
(Y^i,Z^i,K^i)=\Gamma(y^i,z^i,k^i), \ f^i(t,z)=f(t,y^i_{t},\P_{y^i_{t}},z,\P_{z^i_{t}},G_{t}(k^i)).
$$
Similar as the analysis in Step 1, for $i=1,2$, we have     
\begin{equation}\label{re-2}
Y^{i}_t=\bar{Y}^{i}_{t}+K^{i}_{T}-K^{i}_t, \quad \forall t\in[0,T],   
\end{equation} 
where $(\bar{Y}^i,\bar{Z}^i)\in\mathcal{S}^{\infty}\times BMO$ is the solution to BSDE with terminal condition $\xi$ and the generator $f^i$, and $K^i$ is the second component of the solution to the backward Skorokhod problem $\mathbb{BSP}_{l^i}^{r^i}(s^i,a^i)$. Here, for each $t\in[0,T]$,  
\begin{equation}\label{bsalri}\begin{split}
&s^i_t=\E\left[\bar{Y}^i_{0}-\bar{Y}^i_{t}\right],\ a^i=\mathbf{E}[\xi],\\
&l^i(t,x)=\E\left[L(t,\bar{Y}^i_{t}-\E\left[\bar{Y}^i_{t}\right]+x)\right],\\
&r^i(t,x)=\E\left[R(t,\bar{Y}^i_{t}-\E\left[\bar{Y}^i_{t}\right]+x)\right].\\
\end{split}\end{equation}
Therefore, we have
\begin{equation}\begin{split}
   \left\|Y^1-Y^2\right\|_{\mathcal{S}^{\infty}}
   &\leq \left\|\bar{Y}^1-\bar{Y}^2\right\|_{\mathcal{S}^{\infty}}+2\sup_{t\in[0,T]}\left|K^1_t-K_t^2\right|.
\end{split}   
\end{equation}
Recall the following estimates from the proof 
of Lemma 2.11 in \cite{LP} (see Eqs. (23) and (24) in \cite{LP})
$$
\begin{aligned}
   \left\|\bar{Y}^1-\bar{Y}^2\right\|_{\mathcal{S}^{\infty}} &\leq \lambda \left(2T\left\|y^1-y^2\right\|_{\mathcal{S}^{\infty}}+T\sup_{t\in[0,T]}\left|k_t^1-k_t^2\right|+\sqrt{3T+6T^{1-\alpha}\tilde{A}^{2\alpha}}\left\|z^1-z^2\right\|_{BMO}\right),\\
   \left\|Z^1-Z^2\right\|_{BMO}&\leq 2\lambda \sqrt{1+12\lambda^2+24 \lambda^2\tilde{A}^2}\left(2T\left\|y^1-y^2\right\|_{\mathcal{S}^{\infty}}+T\sup_{t\in[0,T]}\left|k_t^1-k_t^2\right|\right. \\
   &\quad \left.+\sqrt{3T+6T^{1-\alpha}\tilde{A}^{2\alpha}}\left\|z^1-z^2\right\|_{BMO}\right).\\
\end{aligned}
$$
In view of Proposition \ref{continuous}, we obtain that
$$
\begin{aligned}
\sup_{t \in[0, T]}\left|K^1_{t}-K^2_{t}\right|
\leq \frac{12C}{c}\sup_{t \in[0, T]}\mathbf{E}\left[\left|\bar{Y}^1_t-\bar{Y}^2_t\right|\right]\leq \frac{12C}{c}\left\|\bar{Y}^1-\bar{Y}^2\right\|_{\mathcal{S}^{\infty}}.
\end{aligned}		
$$
Therefore, we derive 
\begin{equation}\label{contraction}\begin{split}
   &\left\|Y^1-Y^2\right\|_{\mathcal{S}^{\infty}}+\left\|Z^1-Z^2\right\|_{BMO}+\sup_{t \in[0, T]}\left|K^1_{t}-K^2_{t}\right|\\
   \leq &\lambda \hat{A} \left(2T\left\|y^1-y^2\right\|_{\mathcal{S}^{\infty}}+\sqrt{3T+6T^{1-\alpha}\tilde{A}^{2\alpha}}\left\|z^1-z^2\right\|_{BMO}+T\sup_{t\in[0,T]}\left|k^1_t-k^2_t\right|\right),\\
\end{split}   
\end{equation}
where $$\hat{A}=\left(1+\frac{36C}{c}+2\sqrt{1+12\lambda^2+24\lambda^2\tilde{A}^2}\right).$$

Now we define
\begin{equation}\label{delta}
  \hat{\delta}^{\tilde{A}}:=\min\left(\frac{1}{4\hat{A}\lambda}, \frac{1}{12\hat{A}\lambda}, \left(\frac{1}{24\tilde{A}^{2\alpha}}\hat{A}^2\lambda^2\right)^{\frac{1}{1-\alpha}},\delta^{\tilde{A}}\right). 
\end{equation}
It is straightforward to check that for any $T\in(0,\hat{\delta}^{\tilde{A}}]$, we have
$$\begin{aligned}
  &\left\|Y^1-Y^2\right\|_{\mathcal{S}^{\infty}}+\left\|Z^1-Z^2\right\|_{BMO}+\sup_{t \in[0, T]}\left|K^1_{t}-K^2_{t}\right|\\
   \leq&\frac{1}{2}\left(\left\|y^1-y^2\right\|_{\mathcal{S}^{\infty}}+\left\|z^1-z^2\right\|_{BMO}+\sup_{t\in[0,T]}\left|k^1_t-k^2_t\right|\right).\\  
\end{aligned}$$
The proof is complete.
\end{proof}

Now, we are ready to give the proof of Theorem $\ref{thmbddterminal}$.

\begin{proof}[Proof of Theorem \ref{thmbddterminal}] We take $\tilde{A}\geq \tilde{A}_0$ and choose $\hat{\delta}^{\tilde{A}}$ as $(\ref{delta})$. For $T\in(0,\hat{\delta}^{\tilde{A}}]$, we define $(Y^0,Z^0,K^0)=(0,0,0)$ and $(Y^{i+1},Z^{i+1},K^{i+1})=(Y^{Y^{i},Z^{i},K^{i}},Z^{Y^{i},Z^{i},K^{i}},K^{Y^{i},Z^{i},K^{i}})$ by recurrence. 
For convenience, we denote 
$$
\begin{aligned}
  f^n_t&:=f(t,Y^{n-1}_{t},\mathbf{P}_{Y^{n-1}_{t}},Z_t^{n},\mathbf{P}_{Z_t^{n-1}},G_{t}(K^{n-1})),\\
  f_t&:=f(t,Y_{t},\mathbf{P}_{Y_{t}},Z_t,\mathbf{P}_{Z_t},G_{t}(K)).
\end{aligned}
$$
By the proof of Lemma \ref{4.1.1}, for each $n\geq 1$, $K^n$ is the second component of the solution to $\mathbb{BSP}^{r^n}_{l^n}(s^n,a^n)$, where for each $t\in[0,T]$,
\begin{equation}\label{sarln}\begin{split}
& s_t^{n}=\mathbf{E}\left[\int_{0}^{t} f_s^n ds\right],\ l^n(t,x)=\E\left[L(t,\widetilde{Y}^n_t-\E[\widetilde{Y}^n_t]+x)\right],\\
& a^n=\mathbf{E}[\xi], \ r^n(t,x)=\E\left[R(t,\widetilde{Y}^n_t-\E[\widetilde{Y}^n_t]+x)\right]. \end{split}
 \end{equation}
and $\widetilde{Y}^n$ is the first component of the solution to  the following BSDE:
\begin{equation*}
\widetilde{Y}^n_t=\xi+\int_{t}^{T} f_s^nds-\int_{t}^{T}\widetilde{Z}^n_s dB_s.    
\end{equation*}
It follows from Lemma \ref{4.1.1} that there exists $(Y,Z,K)\in \mathscr{B}_{\tilde{A}}$ such that 
\begin{equation}\label{YnY}
\left\|Y^n-Y\right\|_{\mathcal{S}^{\infty}}\to 0,\ \left\|Z^n-Z\right\|_{BMO}\to 0,\ \sup_{t\in[0,T]}|K_t^n-K_t|\to 0.    
\end{equation}

In order to show that the triple $(Y,Z,K)$ is indeed a solution to MFBSDE with nonlinear resistance, it suffices to prove the Skorokhod conditions hold. The proof is similar to the one for Theorem 4.4 in \cite{LS}. For readers' convenience, we give a short proof here. For this purpose, let $(Y',Z',K')$ be the solution to the doubly mean reflected BSDE on $[0,T]$ with terminal value $\xi$, constant coefficient $\{f(s,Y_s,\P_{Y_s},Z_s,\P_{Z_s},G_s(K))\}_{s\in[0,T]}$ and loss functions $L,R$. It suffices to prove that $Y'\equiv Y$ and $K'\equiv K$. Let $(\widetilde{Y},\widetilde{Z})$ be the solution to the BSDE on $[0,T]$ with terminal value $\xi$ and constant coefficient $\{f(s,Y_s,\P_{Y_s},Z_s,\P_{Z_s},G_s(K))\}_{s\in[0,T]}$. According to Proposition \ref{pro-1}, we have
\begin{equation*}
Y'_{t}=\widetilde{Y}_t+K'_{T}-K'_{t}=\xi+\int_{t}^{T} f_s ds-\int_{t}^{T}\widetilde{Z}_s dB_s+K'_{T}-K'_{t},    
\end{equation*}
where $K'$ is the second component of the solution to $\mathbb{BSP}^{r}_{l}(s,a)$. 
Here, for any $(t,x)\in[0,T]\times \mathbb{R}$,
\begin{equation}\label{lrsaL}
\begin{split}
& s_t=\E\left[\int_{0}^t f_s ds\right], \ l(t,x)=\E\left[L(t,\widetilde{Y}_t-\E[\widetilde{Y}_t]+x)\right],\\
& a=\E[\xi], \ r(t,x)=\E\left[R(t,\widetilde{Y}_t-\E[\widetilde{Y}_t]+x)\right].
\end{split}
\end{equation}
Therefore, we have
\begin{align*}
   \sup_{t\in[0,T]}|s_{t}^{n}-s_{t}|\rightarrow 0, \ n\rightarrow \infty.
\end{align*}
In addition, applying Assumptions \ref{assLR}, \ref{assqua} and B-D-G inequality , we obtain that 
$$
\begin{aligned}
	\bar{L}^n_{T}&:= \sup_{(t,x)\in[0,T]\times \mathbb{R}}\left|l^n(t,x)-l(t,x)\right|\leq 2C\sup_{t\in[0,T]}\mathbf{E}\left[\left|\widetilde{Y}^n_t-\widetilde{Y}_t\right|\right]\\
    &\leq 2C\sup_{t\in[0,T]}\mathbf{E}\left[\int_{t}^{T}\left|f_s^n-f_s\right|ds\right]+2C\sup_{t\in[0,T]}\mathbf{E}\left[\left|\int_{t}^{T}\tilde{Z}^n_s-\tilde{Z}_sdB_s\right|\right]\\
    &\leq 2\lambda \mathbf{E}\left[\int_{0}^{T}\left|Y^n_s-Y_s\right|ds\right]+\lambda \int_{0}^{T}\left|K^{n-1}_s-K_s\right|ds \\
    &\quad + \lambda \mathbf{E}\left[\int_{0}^{T}\left(1+\left|Z^n_s\right|+\left|Z_s\right|\right)\left|Z^n_s-Z_s\right|ds\right]\\
&\quad+\lambda \mathbf{E}\left[\int_{0}^{T}\left(1+\mathbf{E}\left[\left|Z^n_s\right|\right]+\mathbf{E}\left[\left|Z_s\right|\right]\right)\left|Z^n_s-Z_s\right|ds\right]\\
&\quad+\Lambda \mathbf{E}\left[\int_{0}^{T}\left(\tilde{Z}^n_s-\tilde{Z}_s\right)^2ds\right]^{\frac{1}{2}},\\ 
\end{aligned}
$$
where $\Lambda$ is a constant. Combining with \eqref{YnY}, we have $\lim_{n\rightarrow \infty}\bar{L}^n_{T}=0.$ 

Similarly, 
$$
\begin{aligned}
	\bar{R}^n_{T}&:= \sup_{(t,x)\in[0,T]\times \mathbb{R}}\left|r^n(t,x)-r(t,x)\right|\\
	&\leq 2C\sup_{t\in[0,T]}\mathbf{E}\left[\left|\widetilde{Y}^n_t-\widetilde{Y}_t\right|\right]\rightarrow 0, \ \ n \rightarrow \infty.
\end{aligned}
$$
 In view of Proposition \ref{continuous}, we have 
$$
\begin{aligned}
 \sup_{t\in[0,T]}\left|K_t^{n}-K'_t\right|
 &\leq \frac{4C}{c}\sup_{t\in[0,T]}\left|s_t^{n}-s_t\right|+\frac{2}{c}\left(\bar{L}^n_{T}\vee\bar{R}^n_{T}\right)\to 0,\ \text{as} \ \  n\to \infty , 
\end{aligned}
$$
which, together with \eqref{YnY}, implies that $ K'\equiv K$. By the uniqueness of BSDE, we obtain that $Y'\equiv Y$. Therefore, the Skorokhod condition holds. The proof is complete.    
\end{proof}

Finally, we consider the general time horizon $[0,T]$ and give the proof of Theorem \ref{globalMFBSDE}.

\begin{proof}[Proof of Theorem \ref{globalMFBSDE}] 
  {\bf Step 1. } We claim that for given $k\in BV[0,T]$, the following MFBSDE with double mean reflections
 \begin{equation}\label{MFBSDEDMRk}
\begin{cases}
Y_t=\xi+\int_t^T f(s,Y_s,\mathbf{P}_{Y_s},Z_s,\mathbf{P}_{Z_s},G_s(k))ds-\int_t^T Z_s dB_s+K_T-K_t, \\
\mathbf{E}[L(t,Y_t)]\leq 0\leq \mathbf{E}[R(t,Y_t)], \\
K_t=K^R_t-K^L_t,\ K^R,K^L\in I[0,T],\\
\int_0^T \mathbf{E}[R(t,Y_t)]dK_t^R=\int_0^T \mathbf{E}[L(t,Y_t)]dK^L_t=0.\\
\end{cases}
\end{equation}
admits a unique solution $(Y,Z,K)\in\mathcal{S}^{\infty}\times BMO \times BV[0,T]$. The proof is similar with the one for Theorem 2.13 in \cite{LP}. For the reader's convenience, we restate the complete proof here.

Indeed, according to Theorem $\ref{thmbddterminal}$, we take an appropriate constant $\bar{h}>0$ depending on $\tilde{A}_0,\hat{H},C,c,M,\lambda$, where $$\tilde{A}_0=(3+\frac{24C}{c})\bar{H}+\frac{2M}{c}+(\frac{2\bar{H}}{\lambda}+\frac{1}{3})e^{9\lambda \bar{H}},$$
$\bar{H}$ is the constant defined in \eqref{barH} and $\hat{H}=\tilde{H}+\lambda\sup_{t\in[0,T]}\left|k_t\right|$. 

Then the doubly mean reflected MFBSDE \eqref{MFBSDEDMRk} admits a unique solution $(Y^1,Z^1,K^1)\in\mathcal{S}^{\infty}[T-\bar{h},T]\times BMO[T-\bar{h},T]\times BV[T-\bar{h},T]$. Moreover, according to Lemma \ref{lem4.6}, we have $\left\|Y^1\right\|_{\mathcal{S}^{\infty}[T-\bar{h},T]}\leq \bar{H}.$ 

Next,  taking $T-\bar{h}$ as the terminal time and applying Theorem $\ref{thmbddterminal}$, the doubly mean reflected MFBSDE \eqref{MFBSDEDMRk} admits a unique solution $(Y^2,Z^2,K^2)\in\mathcal{S}^{\infty}[T-2\bar{h},T-\bar{h}]\times BMO[T-2\bar{h},T-\bar{h}]\times BV[T-2\bar{h},T-\bar{h}]$ to Eq. \eqref{MFBSDEDMRk}. 

Repeating this procedure, for $i=1,...,[\frac{T}{\bar{h}}]$, we set 
\begin{equation}\begin{split}
 &Y_t=\sum_{i=1}^{[\frac{T}{\bar{h}}]} Y_t^i \mathbf{1}_{[T-i\bar{h},T-(i-1)\bar{h})}+Y_T^1 \mathbf{1}_{\left\{T\right\}}, \ Z_t=\sum_{i=1}^{[\frac{T}{\bar{h}}]} Z_t^i \mathbf{1}_{[T-i\bar{h},T-(i-1)\bar{h})}+Z_T^1 \mathbf{1}_{\left\{T\right\}},\\
\end{split}  
\end{equation}
and $K_t=\sum_{i=1}^{[\frac{T}{\bar{h}}]}(\sum_{m=1}^{i-1}K^{m}_{T-(m-1)}\bar{h}+K_t^i)\mathbf{1}_{[T-i\bar{h},T-(i-1)\bar{h})}+K_T^1 \mathbf{1}_{\left\{T\right\}}$. It is easy to check that  $(Y,Z,K)\in\mathcal{S}^{\infty}\times BMO \times BV[0,T]$ is a solution to \eqref{MFBSDEDMRk}. 

According to the uniqueness of local solution on each time interval, the uniqueness of the global solution on $[0,T]$ is obtained.

{\bf Step 2.} For $k^i\in BV[0,T],\ i=1,2$, let $(Y^i,Z^i,K^i)$ be the unique solution to \eqref{MFBSDEDMRk} with data $k^i$. According to Lemma \ref{4.1.1} , there exists a constant $\tilde{h}>0$ depending only on $H,\tilde{H},\lambda, C,c,M$ and $\sup_{t\in[0,T]}\left|k_t^1\right|, \sup_{t\in[0,T]}\left|k_t^2\right|$, such that for any $1 \leq i \leq N$, where $N$ is the smallest integer satisfying $N\geq \frac{T}{\tilde{h}}$,
$$
\begin{aligned}
  &\left\|Y^{1}-Y^{2}\right\|_{\mathcal{S}^{\infty}[(i-1)\tilde{h},i\tilde{h} \wedge T]}+\left\|Z^{1}-Z^{2}\right\|_{BMO[(i-1)\tilde{h},i\tilde{h} \wedge T]}+\sup_{t\in[(i-1)\tilde{h},i\tilde{h} \wedge T]}\left|K_{t}^{1}-K_{t}^{2}\right|\\
  &\leq \frac{1}{2} \left(\left\|Y^{1}-Y^{2}\right\|_{\mathcal{S}^{\infty}[(i-1)\tilde{h},i\tilde{h} \wedge T]}+\left\|Z^{1}-Z^{2}\right\|_{BMO[(i-1)\tilde{h},i\tilde{h} \wedge T]}+\sup_{t\in[(i-1)\tilde{h},i\tilde{h} \wedge T]}\left|k_t^1-k_t^2\right|\right).
\end{aligned}
$$
Therefore, we have 
$$\sup_{t\in[0,T]}\left|K_t^1-K_t^2\right|\leq \max_{1\leq i \leq N}\left(\sup_{t\in[(i-1)\tilde{h},i\tilde{h} \wedge T]}\left|K_t^1-K_t^2\right|\right)\leq \frac{1}{2}\sup_{t\in[0,T]}\left|k_t^1-k_t^2\right|.$$
By the principle of standard contraction mapping, the doubly mean reflected MFBSDE with resistance \eqref{nonlinearyz} admits a unique solution $(Y,Z,K)\in\mathcal{S}^{\infty}\times BMO \times BV[0,T]$. 
\end{proof}


\section{Doubly mean reflected MFBSDEs with density function}
In this section, we consider the well-posedness of doubly mean reflected MFBSDE with resistance given in terms of density function. Before investigating this problem, we first consider the regularity of the last component of the solution  to MFBSDEs with double mean reflections. For this purpose, we introduce the following assumption for the loss functions.

\begin{assumption}\label{assLRbounded}
    The loss functions $L,R \in C^{1,2}_b([0,T]\times\mathbb{R})$, where $C^{1,2}_b([0,T]\times\mathbb{R})$ is the collection  of functions that are continuously differentiable in their first variable and twice continuously differentiable in their second variable, and both
derivatives are uniformly bounded.
\end{assumption}
\begin{proposition}\label{Klip}
    Suppose that Assumptions \ref{asslip2}, \ref{assLRbounded} hold.  Let $(Y,Z,K)$ denote the unique deterministic solution to MFBSDE \eqref{MFBSDEDMR} with double mean reflections. Moreover, suppose that 
    \begin{align}\label{bdd of Z}
        \sup_{t\in[0,T]}\E[|Z_t|^2]<\infty.
    \end{align}
Then the process $K$ is Lipschitz continuous. Furthermore, $K$ has the density function.
\end{proposition}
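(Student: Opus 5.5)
Our plan is to use the Skorokhod-problem representation of $K$ and to show that the input function $\bar s$ and the barrier roots $\phi,\psi$ in Theorem \ref{SP} are Lipschitz in time. Write $Y_t=\widetilde Y_t+K_T-K_t$, as in the proof of Proposition \ref{pro-1}. Here $\widetilde Y$ solves the BSDE with terminal value $\xi$ and the fixed driver $f_s:=f(s,Y_s,\P_{Y_s},Z_s,\P_{Z_s})$. Then $K$ is the second component of $\mathbb{BSP}_l^r(s,a)$ with
\[
s_t=\E\Big[\int_0^tf_sds\Big],\qquad l(t,x)=\E\big[L(t,\widetilde Y_t-\E[\widetilde Y_t]+x)\big],
\]
and the analogous formula for $r$. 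After time reversal (Remark \ref{rem2.7}), $\bar K$ solves $\mathbb{SP}_{\bar l}^{\bar r}(\bar s)$. The key observation is that $K$ only increases, or decreases, on the sets where $\E[R(t,Y_t)]=0$, resp.\ $\E[L(t,Y_t)]=0$. On these sets $K_t$ is pinned by the implicit relation $\E[R(t,\widetilde Y_t+K_T-K_t)]=0$.

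\textbf{Step 1: the maps $t\mapsto \E[L(t,\widetilde Y_t+x)]$ and $t\mapsto\E[R(t,\widetilde Y_t+x)]$ are Lipschitz, uniformly in $x$.} Apply Itô's formula to $L(t,\widetilde Y_t+x)$, using $d\widetilde Y_t=-f_tdt+Z_tdB_t$ (the $Z$ of $\widetilde Y$ coincides with that of $Y$). Taking expectations, the stochastic integral vanishes, since $L_x$ is bounded and $Z\in\mathcal H^2$. We obtain
\[
\E[L(t_2,\widetilde Y_{t_2}+x)]-\E[L(t_1,\widetilde Y_{t_1}+x)]=\int_{t_1}^{t_2}\E\Big[\partial_tL-\partial_xL\,f_s+\tfrac12\partial_{xx}L\,|Z_s|^2\Big]ds .
\]
By Assumption \ref{assLRbounded} the derivatives are bounded. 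By Assumption \ref{asslip2} together with Lemma \ref{proMFBSDE}, $\sup_s\E|f_s|$ is finite, once we note that $\sup_t\E|Y_t|<\infty$ and that the Lipschitz growth gives $\E|f_s|\le \E|f_s(0)|+\beta(\E|Y_s|+\E|Z_s|)$. Condition \eqref{bdd of Z} bounds $\E|Z_s|^2$ uniformly. Hence the integrand is bounded by a constant $\Lambda$ independent of $x$ and $s$. This is the step where \eqref{bdd of Z} is essential, and I expect it to be the main obstacle. One technical point is the term $\E|f_s(0)|$: pointwise-in-time boundedness is not given by $\mathcal H^p$ membership. I would either assume it, or absorb it by noting that only integrability of $f(\cdot)$ over $[t_1,t_2]$ enters. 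The latter gives absolute continuity, with Lipschitz continuity following if $\sup_s\E|f_s(0)|<\infty$.

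\textbf{Step 2: transfer to $K$.} Define $\psi_t$ as the unique root of $\E[R(t,\widetilde Y_t+x)]=0$, and $\phi_t$ analogously with $L$. The roots exist by the strict monotonicity and bi-Lipschitz property in $x$ in Assumption \ref{assLR}(3). If $\E[R(t_i,\widetilde Y_{t_i}+\psi_{t_i})]=0$ for $i=1,2$, then
\[
c|\psi_{t_1}-\psi_{t_2}|\le|\E R(t_1,\widetilde Y_{t_1}+\psi_{t_1})-\E R(t_1,\widetilde Y_{t_1}+\psi_{t_2})|=|\E R(t_2,\widetilde Y_{t_2}+\psi_{t_2})-\E R(t_1,\widetilde Y_{t_1}+\psi_{t_2})|\le\Lambda|t_1-t_2|,
\]
so $\phi$ and $\psi$ are $\Lambda/c$-Lipschitz. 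Now set $D_t:=K_T-K_t$. This $D$ is the minimal-variation function satisfying $\phi_t\le D_t\le\psi_t$, the pair $(\phi,\psi)$ being separated by Assumption \ref{assLR}(4). By the explicit sup/inf formula in Theorem \ref{SP}, applied in reversed time, $D$ is a composition of running sup/inf operations of $\phi$ and $\psi$. Such operations preserve Lipschitz constants.

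More directly: on any interval where $dK^R>0$ we have $D_t=\psi_t$, and where $dK^L>0$ we have $D_t=\phi_t$; elsewhere $D$ is constant. Hence $|D_{t_2}-D_{t_1}|\le(\Lambda/c)|t_2-t_1|$, and $K$ is Lipschitz. Finally, Rademacher's theorem, or simply the absolute continuity of Lipschitz functions, gives $K_t=\int_0^t k_sds$ with $|k|\le\Lambda/c$, which is the required density.
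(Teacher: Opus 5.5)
Your proposal is correct and follows essentially the same route as the paper. Both represent $K$ through the time-reversed Skorokhod problem, show the barrier roots $\phi,\psi$ are Lipschitz by applying It\^o's formula to $L(\cdot,\widetilde Y_\cdot+x)$ uniformly in $x$ together with the lower bi-Lipschitz constant $c$, and pass this to $K$ through the sup--inf structure of the Skorokhod map (the paper cites Remark 2.7 of \cite{Li} for this step). Your observation that $f(0)\in\mathcal H^p$ does not give $\sup_s\E|f_s(0)|<\infty$ is a fair point: the paper's proof uses this bound silently. One small slip: with your labeling the constraint reads $\psi_t\le D_t\le\phi_t$, not $\phi_t\le D_t\le\psi_t$.
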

\begin{proof}
    According to Proposition 4.2 in \cite{L}, solution $K$ is the second component of solution of backward Skorokhod problem $\mathbb{BSP}_{l}^{r}(s,a)$. Here, for each $t\in[0,T]$,
\begin{equation}\begin{split}
&s_t=\mathbf{E}\left[\int_{0}^{t}f(s,Y_s,\mathbf{E}[Y_s],Z_s,\mathbf{E}[Z_s])ds\right],\ a=\mathbf{E}[\xi],\\
&l(t,x):=\mathbf{E}\left[L(t,y_t-\E[y_t]+x)\right], \ r(t,x):=\mathbf{E}\left[R(t,y_t-\E[y_t]+x)\right], 
\end{split}
 \end{equation}
where $(y,z)$ is the solution to  BSDE with terminal $\xi$ and $\left\{f(t,Y_t,\mathbf{E}[Y_t],Z_t,\mathbf{E}[Z_t])\right\}_{t\in[0,T]}$. 
For any $t\in[0,T]$, $K_t=\bar{K}_T-\bar{K}_{T-t}$, where $\bar{K}$ is the second component of the solution to the Skorokhod problem $\mathbb{SP}_{\bar{l}}^{\bar{r}}(\bar{s})$. Here, for any $t\in[0,T]$, 
\begin{equation}
\bar{s}_{t}=a+s_{T}-s_{T-t}=\mathbf{E}[y_{T-t}],\ \bar{l}(t,x)=l (T-t,x), \ \bar{r}(t,x)=r(T-t,x).
\end{equation}
Let {$\bar{\phi}_{t}, \bar{\psi}_{t}$}  be the unique solutions to the following equations, respectively
$$\bar{l}(t,\bar{s}_t+x)=0, \ \bar{r}(t,\bar{s}_t+x)=0.$$
Applying Remark 2.7 in \cite{Li}, we have 
\begin{equation}
\begin{split}
   \left|K_t-K_s\right|&\leq \sup_{u_1,u_2\in[s,t]}\left|K_{u_1}-K_{u_2}\right|\\
   &=\sup_{u_1,u_2\in[s,t]}\left|\bar{K}_T-\bar{K}_{T-u_1}-(\bar{K}_T-\bar{K}_{T-u_2})\right|\\
   &\leq \sup_{u_1,u_2\in[s,t]}\left|\bar{K}_{T-u_1}-\bar{K}_{T-u_2}\right|=\sup_{t_1,t_2\in[T-t,T-s]}\left|\bar{K}_{t_1}-\bar{K}_{t_2}\right|\\
   &\leq \sup_{p_1,p_2\in[T-t,T-s]}\left|\bar{\phi}_{p_1}-\bar{\phi}_{p_2}\right|+\sup_{p_1,p_2\in[T-t,T-s]}\left|\bar{\psi}_{p_1}-\bar{\psi}_{p_2}\right|\\
   &{= \sup_{p_1,p_2\in[s,t]}\left|{\phi}_{p_1}-{\phi}_{p_2}\right|+\sup_{p_1,p_2\in[s,t]}\left|{\psi}_{p_1}-{\psi}_{p_2}\right|},
\end{split}    
\end{equation}
where $\bar{\phi}_t=\phi_{T-t}$ and ${\psi}_t=\bar{\psi}_{T-t}$. It remains to prove that $\phi_t,\psi_t$ are Lipschitz continuous. We only prove this fact for $\phi$. First, we have
\begin{equation}
\begin{split}
    \left|\phi_{t}-\phi_{s}\right|&=\left|\bar{\phi}_{T-t}-\bar{\phi}_{T-s}\right|\\
    &\leq  \frac{1}{c}\left|\bar{l}(T-t,\bar{s}_{T-t}+\bar{\phi}_{T-t})-\bar{l}(T-t,\bar{s}_{T-t}+\bar{\phi}_{T-s})\right|\\
    &=\frac{1}{c}\left|\bar{l}(T-t,\bar{s}_{T-t}+\bar{\phi}_{T-s})\right|\\
    &=\frac{1}{c}\left|\mathbf{E}\left[L(t,y_{t}+\phi_{s})\right]\right|.
\end{split}    
\end{equation}
Applying It\^o's  formula to $L(\cdot,y_{\cdot}+\phi_{s})$ on the interval $[s,t]$ and then taking expectations on both sides of the equation, combining with Assumptions \ref{asslip2},\ \ref{assLRbounded} and \eqref{bdd of Z}, we have
\begin{equation}
\begin{split}
  \left|\mathbf{E}\left[L(t,y_{t}+\phi_{s})\right]\right|&\leq  \mathbf{E}\left[\int_{s}^{t}\left|\frac{\partial L}{\partial r}(r,y_{r}+\phi_{s})\right|dr\right]+\frac{1}{2}\mathbf{E}\left[\int_{s}^{t}\left|\frac{\partial^2 L}{\partial x^2}(r,y_{r}+\phi_{s})Z_r^2\right|dr\right]\\ 
  &\quad+\mathbf{E}\left[\int_{s}^{t}\left|\frac{\partial L}{\partial x}(r,y_{r}+\phi_{s})\left(|Y_r|+\mathbf{E}[|Y_r|]+|Z_r|+\mathbf{E}[|Z_r|]\right)\right|dr\right]\\
  &\quad+\mathbf{E}\left[\int_{s}^{t}\left|\frac{\partial L}{\partial x}(r,y_{r}+\phi_{s})f(r,0,0,0,0)\right|dr\right]\\
  &\leq \Lambda |t-s|.
\end{split}
\end{equation}
where $\Lambda$ is an appropriate constant.
The proof is complete. 
\end{proof}
\begin{lemma}\label{estimateDYDZ}
 Consider the following MFBSDE:
\begin{equation}\label{MFBSDEm}
  Y_t=\xi+\int_t^T f(s,Y_s,\mathbf{E}[Y_s],Z_s,\mathbf{E}[Z_s])ds-\int_t^T Z_s dB_s.  
\end{equation}
Let Assumption \ref{asslip2} hold. Suppose that $f$ is continuously differentiable in $y$ and $z$ with uniformly bounded derivative. Moreover, assume that $\xi$ and $f(\cdot, y,y',z,z')$ are Malliavin differentiable for each $y,y',z,z'$ with 
    \begin{itemize}
        \item [(i)] $\sup_{\theta \in[0,T]}\mathbf{E}\left[|D_{\theta}\xi|^p\right]<\infty$;
        \item [(ii)] $\sup_{\theta \in[0,T]}\mathbf{E}\left[\left(\int_{0}^{T}|D_{\theta}f(t,Y_t,\mathbf{E}[Y_t],Z_t,\mathbf{E}[Z_t])|dt\right)^{p}\right]<\infty$.
    \end{itemize}  
    Then, we have $\mathbf{E}\left[\sup_{t\in[0,T]}\left|Z_t\right|^p\right]<\infty$. 
\end{lemma}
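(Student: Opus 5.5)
The plan is to differentiate \eqref{MFBSDEm} in the Malliavin sense and then use the classical identification $Z_t=D_tY_t$. The mean-field arguments $\mathbf{E}[Y_s],\mathbf{E}[Z_s]$ are deterministic, so their Malliavin derivatives vanish. Since $f$ is $C^1$ in $(y,z)$ with bounded derivatives, and $\xi$, $f(\cdot,y,y',z,z')$ are Malliavin differentiable, the standard argument (El Karoui--Peng--Quenez, Prop.~5.3, applied with the generator frozen along the mean-field terms) gives the following. For each $\theta\in[0,T]$, the pair $(D_\theta Y_t,D_\theta Z_t)_{t\in[\theta,T]}$ solves the linear BSDE
\begin{align*}
D_\theta Y_t=D_\theta\xi+\int_t^T\Big(a_sD_\theta Y_s+b_sD_\theta Z_s+(D_\theta f)(s,Y_s,\mathbf{E}[Y_s],Z_s,\mathbf{E}[Z_s])\Big)ds-\int_t^TD_\theta Z_s\,dB_s,
\end{align*}
where $a_s=\partial_yf$ and $b_s=\partial_zf$ are evaluated along the solution, so $|a|,|b|\le\lambda$. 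Moreover, $D_\theta Y_t=D_\theta Z_t=0$ for $\theta>t$, and $Z_t=D_tY_t$ for a.e. $t$, with a continuous version of $t\mapsto D_tY_t$. First I would justify the differentiation of the mean-field equation. To do this, I would regard it as a standard BSDE with driver $\tilde f(s,y,z)=f(s,y,\mathbf{E}[Y_s],z,\mathbf{E}[Z_s])$, where $\mathbf{E}[Y_s],\mathbf{E}[Z_s]$ are now fixed deterministic functions. This driver is Lipschitz, and its Malliavin derivative is controlled by hypothesis (ii).

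Second, I will solve the linear BSDE explicitly. Let $\Gamma^t_s=\exp\big(\int_t^s(a_r-\tfrac12 b_r^2)dr+\int_t^sb_rdB_r\big)$. Then
\begin{align*}
D_\theta Y_t=\mathbf{E}_t\Big[\Gamma^t_TD_\theta\xi+\int_t^T\Gamma^t_s\,(D_\theta f)_s\,ds\Big],\qquad Z_t=D_tY_t=\mathbf{E}_t\Big[\Gamma^t_TD_t\xi+\int_t^T\Gamma^t_s\,(D_t f)_s\,ds\Big].
\end{align*}
The a priori estimate of Lemma \ref{proMFBSDE} (in its linear BSDE form) then yields, uniformly in $\theta$,
\begin{align*}
\mathbf{E}\Big[\sup_{t\in[\theta,T]}|D_\theta Y_t|^p\Big]\le M(\lambda,p,T)\,\mathbf{E}\Big[|D_\theta\xi|^p+\Big(\int_0^T|(D_\theta f)_s|ds\Big)^p\Big],
\end{align*}
and by (i)--(ii) the right-hand side is bounded by a constant independent of $\theta$. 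Taking $\theta=t$ already gives $\sup_t\mathbf{E}[|Z_t|^p]<\infty$.

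The main obstacle is passing from this to $\mathbf{E}[\sup_t|Z_t|^p]$. The quantity $Z_t=D_tY_t$ is a diagonal object: the parameter $\theta=t$ moves with $t$, so the uniform-in-$\theta$ bound above does not directly control the supremum over $t$. I would attack this through the representation. Since $\Gamma^t_s=\Gamma^0_s/\Gamma^0_t$, we have
\begin{align*}
|Z_t|\le(\Gamma^0_t)^{-1}\,\mathbf{E}_t\big[\Xi_t\big],\qquad \Xi_t:=\Gamma^0_T|D_t\xi|+\int_0^T\Gamma^0_s|(D_tf)_s|ds .
\end{align*}
The process $t\mapsto\mathbf{E}_t[\Xi_t]$ is not a martingale because $\Xi_t$ depends on $t$. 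I would try to dominate it by a martingale, comparing $\mathbf{E}_t[\Xi_t]$ with the family $\mathbf{E}_t[\Xi_\theta]$ and using the continuity in $\theta$ of $D_\theta\xi$ and $D_\theta f$ in $L^p$. I would then apply Doob's $L^p$ maximal inequality (with a slightly smaller exponent if needed) together with H\"older's inequality and the moment bounds $\mathbf{E}[\sup_s(\Gamma^0_s)^{\pm q}]<\infty$, which hold because $a,b$ are bounded.

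If this domination cannot be obtained from (i)--(ii) alone, this is exactly the step where the argument would need hypotheses (i)--(ii) read with the supremum over $\theta$ inside the expectation. With that reading the Doob argument closes at once, since then $\sup_t\mathbf{E}_t[\Xi_t]\le\sup_t\mathbf{E}_t[\sup_\theta\Xi_\theta]$. I would check carefully which reading the later applications of the lemma actually require.
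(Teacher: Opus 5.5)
Up to the final step you follow the paper's proof exactly. The paper differentiates as in El Karoui--Peng--Quenez and obtains the linear BSDE for $(D_\theta Y,D_\theta Z)$. It applies Lemma \ref{proMFBSDE} to bound $\mathbf{E}[\sup_t|D_\theta Y_t|^p]$ uniformly in $\theta$, identifies $D_sY_s=Z_s$, and then simply writes ``Therefore, $\mathbf{E}[\sup_t|Z_t|^p]<\infty$''. That ``therefore'' is precisely the non sequitur you flagged. A bound uniform in $\theta$ controls the diagonal $Z_t=D_tY_t$ only one $t$ at a time, which gives $\operatorname{ess\,sup}_t\mathbf{E}[|Z_t|^p]<\infty$ and nothing more.

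Your proposed repair cannot work, and neither can any other argument from (i)--(ii), because the statement is false as written. Take $f\equiv0$ and $\tau=T/2$. Choose disjoint intervals $I_n\subset\mathbb{R}$, bumps $\psi_n\in C^\infty_c(I_n)$ with $0\le\psi_n\le1$, constants $a_n:=\mathbf{E}[\psi_n(B_\tau)^p]^{-1/p}$, and disjoint intervals $[t_n,t_n+\delta_n)\subset(\tau,T]$. Set $\xi=\int_0^Th_s\,dB_s$ with $h_s=\sum_na_n\psi_n(B_\tau)\mathbf{1}_{[t_n,t_n+\delta_n)}(s)$.
\begin{itemize}
\item Then $Z=h$. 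Since the $\psi_n$ have disjoint supports, $\mathbf{E}[\sup_t|Z_t|^p]=\sum_na_n^p\mathbf{E}[\psi_n(B_\tau)^p]=\infty$.
\item For $\theta>\tau$ we have $D_\theta\xi=h_\theta$, so $\mathbf{E}[|D_\theta\xi|^p]\le1$ there.
\item For $\theta\le\tau$ we have $D_\theta\xi=\sum_na_n\psi_n'(B_\tau)(B_{t_n+\delta_n}-B_{t_n})$. Its $p$-th moment is $c_p\sum_na_n^p\mathbf{E}[|\psi_n'(B_\tau)|^p]\,\delta_n^{p/2}$, which is finite once the $\delta_n$ are small enough.
\item The same choice of $\delta_n$ puts $\xi$ in $\mathcal{L}^p_T$ and makes it Malliavin differentiable, and (ii) is trivial.
\end{itemize}
You can replace $B_\tau$ by $\int_0^\tau g\,dB$ with $g$ continuous and $g(\tau)=0$, smooth the time indicators, and normalise $a_n^p\mathbf{E}[\psi_n^p]=1/n$. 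This makes $\theta\mapsto D_\theta\xi$ continuous in $L^p$ while the conclusion still fails, so the continuity you hoped to exploit would not help. The unsmoothed example also shows that your aside about a continuous version of $t\mapsto D_tY_t$ is unfounded.

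So your proposal, like the paper's proof, leaves the last step open, and it cannot be closed under the stated hypotheses. What (i)--(ii) do give is $\sup_t\mathbf{E}[|Z_t|^p]<\infty$, which you already obtained. That is all the paper uses afterwards: Proposition \ref{Klip} only needs \eqref{bdd of Z}, for $p\ge2$.

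If a pathwise bound is really wanted, your fallback of putting the suprema over $\theta$ inside the expectations is the right one. Set $\Theta:=\sup_\theta|D_\theta\xi|+\sup_\theta\int_0^T|(D_\theta f)_s|\,ds\in L^p$. Your representation gives, for any $1<q<p$,
$|Z_t|\le\mathbf{E}_t[\sup_{s\in[t,T]}\Gamma^t_s\,\Theta]\le\mathbf{E}_t[(\sup_{s\in[t,T]}\Gamma^t_s)^{q'}]^{1/q'}\mathbf{E}_t[\Theta^q]^{1/q}$.
The first factor is bounded because $a,b$ are bounded. Doob's inequality in $L^{p/q}$ then yields $\mathbf{E}[\sup_t|Z_t|^p]\le C\,\mathbf{E}[\Theta^p]$, so no loss of exponent is needed. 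Alternatively, in the Markovian setting of the subsequent remark, the representation $Z_t=\nabla Y_t(\nabla X_t)^{-1}\sigma(X_t)$ gives the bound directly.
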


\begin{proof}
      First, similar to Proposition 5.3 in \cite{EPQ}, it is easily check that the Malliavin derivatives $\left\{(D_{\theta}Y_t,D_{\theta}Z_t)\right\}_{0\leq \theta,t\leq T}$ satisfies the following linear BSDE:  
\begin{equation}\label{eqDYDZ}
\begin{split}
D_\theta Y_t &= D_\theta \xi + \int_t^T \Big[ \partial_y f(r,Y_r,\mathbf{E}[Y_r],Z_r,\mathbf{E}[Z_r])D_\theta Y_r+\partial_z f(r,Y_r,\mathbf{E}[Y_r],Z_r,\mathbf{E}[Z_r]) D_\theta Z_r \\
&\quad+ D_\theta f(r,Y_r,\mathbf{E}[Y_r],Z_r,\mathbf{E}[Z_r]) \Big] dr-\int_t^T D_\theta Z_r dB_r, \qquad 0 \leq \theta \leq t \leq T;
\end{split}
\end{equation}
\begin{equation}\label{eqDYDZ0}
D_\theta Y_t = 0, \qquad D_\theta Z_t = 0, \qquad 0 \leq t < \theta \leq T.
\end{equation}
According to Lemma \ref{proMFBSDE}, we have 
\begin{equation}\label{eqDYDZ1}
\begin{split}
&\mathbf{E}\left[\sup_{t\in[0,T]}\left|D_{\theta}Y_t\right|^p\right]+\mathbf{E}\left[\left(\int_{0}^{T}\left|D_{\theta}Z_t\right|^2dt\right)^{\frac{p}{2}}\right]\\
&\leq C_p\left(\mathbf{E}\left[\left|D_{\theta}\xi\right|^p\right]+\mathbf{E}\left[\left(\int_{0}^{T}|D_{\theta}f(r,Y_r,\mathbf{E}[Y_r],Z_r,\mathbf{E}[Z_r])|dt\right)^{p}\right]\right)<\infty, \  \forall \theta\in[0,T].\\
\end{split}
\end{equation}
For $t\leq s$, we have
$$Y_s=Y_t-\int_{t}^{s}f(r,Y_r,\mathbf{E}[Y_r],Z_r,\mathbf{E}[Z_r])dr+\int_{t}^{s}Z_rdB_r.$$
Therefore, for $t< \theta \leq s$,
\begin{equation}
\begin{split}
D_\theta Y_s &= Z_{\theta}- \int_{\theta}^{s} \Big[ \partial_y f(r,Y_r,\mathbf{E}[Y_r],Z_r,\mathbf{E}[Z_r])D_\theta Y_r+\partial_z f(r,Y_r,\mathbf{E}[Y_r],Z_r,\mathbf{E}[Z_r]) D_\theta Z_r \\
&\quad+ D_\theta f(r,Y_r,\mathbf{E}[Y_r],Z_r,\mathbf{E}[Z_r]) \Big] dr+\int_{\theta}^{s} D_\theta Z_r dB_r;
\end{split}
\end{equation}
Then, taking $\theta=s$, we have $D_sY_s=Z_s, \ a.s.$  Therefore,  $\mathbf{E}\left[\sup_{t\in[0,T]}\left|Z_t\right|^p\right]<\infty$.
\end{proof}

\begin{remark}
 For example, let us consider MFBSDE with the terminal condition $\xi=g(X_T)$ and the   generator $f(t,y,y',z,z')=F(X_t,y,y',z,z')$, where $X_t$ satisfies the following SDE:
    \begin{equation*}
        X_t=x_0+\int_{0}^{t}b(X_s)ds+\int_{0}^{t}\sigma(X_s)dB_s,
    \end{equation*}
 and the functions $b,\sigma,F,g$ are deterministic and continuously differentiable such that $\partial_xb$, $\partial_{x}\sigma$,  $\partial_{x}g$, $\partial_{x}F$,  $\partial_{y}F$, $\partial_{z}F$ are bounded. Then, the assumptions in Lemma \ref{estimateDYDZ} are satisfied. Therefore, we can derive 
  $\mathbf{E}\left[\sup_{t\in[0,T]}\left|Z_t\right|^p\right]<\infty.$ 
  
In particular, adding a deterministic function $A$ into Eq. \eqref{MFBSDEm}, i.e., $(Y,Z)$ satisfies the following equation
\begin{align*}
    Y_t=\xi+\int_t^T f(s,Y_s,\mathbf{E}[Y_s],Z_s,\mathbf{E}[Z_s])ds-\int_t^T Z_s dB_s+A_T-A_t,
\end{align*}
the conclusion of Lemma \ref{estimateDYDZ} remains valid. Therefore, consider the MFBSDE with double mean reflections:
\begin{equation}\label{EMFBSDE}
\begin{cases}
Y_t=\xi+\int_t^T f(s,Y_s,\mathbf{E}[Y_s],Z_s,\mathbf{E}[Z_s])ds-\int_t^T Z_s dB_s+K_T-K_t, \\
\mathbf{E}[L(t,Y_t)]\leq 0\leq \mathbf{E}[R(t,Y_t)],\\ 
{K_t=K^+_t-K^-_t, \ t\in[0,T]}\\
\int_0^T \mathbf{E}[R(t,Y_t)]dK^+_t=\int_0^T \mathbf{E}[L(t,Y_t)]dK^-_t=0. 
\end{cases}
\end{equation}
Under the same assumptions for $\xi$ and $f$ as in Lemma \ref{estimateDYDZ}, we still have $\mathbf{E}\left[\sup_{t\in[0,T]}\left|Z_t\right|^p\right]<\infty$.. 
 \end{remark}
   
Suppose that the compensating term $K$ of the doubly mean reflected MFBSDE with nonlinear resistance is absolutely continuous w.r.t. Lebesgue measure, i.e., there exists a density process $k$ such that $K_t=\int_0^t k_sds$. In this section, we consider doubly mean reflected MFBSDE \eqref{nonlinearyz} when the nonlinear resistance is given as $G_s(K)=k_s$ and we focus on the case $p=2$. In this case, doubly mean reflected MFBSDE \eqref{nonlinearyz} can be rewritten as follows:
\begin{equation}\label{mfbsdefk}
\begin{cases}
Y_t=\xi+\int_t^T \left(f(s,Y_s,\mathbf{E}[Y_s],Z_s,\mathbf{E}[Z_s],k_s)+k_s\right)ds-\int_t^T Z_s dB_s, \\
\mathbf{E}[L(t,Y_t)]\leq 0\leq \mathbf{E}[R(t,Y_t)],\\ 
{k_t=k^+_t-k^-_t, \ t\in[0,T] \textrm{ and }k^+,k^-\geq 0,}\\
\int_0^T \mathbf{E}[R(t,Y_t)]k_t^+dt=\int_0^T \mathbf{E}[L(t,Y_t)]k^-_tdt=0.
\end{cases}
\end{equation}
Unfortunately, the nonlinear resistance function $G$ does not satisfy Assumption \ref{assgk}. To obtain well-posedness of Eq.\eqref{mfbsdefk}, we will construct the solution via the following auxiliary doubly mean reflected MFBSDE without resistance
\begin{equation}\label{aumdrmfbsdewr}
\begin{cases}
\widetilde{Y}_t=\xi+\int_t^T f(s,\widetilde{Y}_s,\mathbf{E}[\widetilde{Y}_s],\widetilde{Z}_s,\mathbf{E}[\widetilde{Z}_s],0)ds-\int_t^T\widetilde{Z}_s dB_s+\widetilde{K}_T-\widetilde{K}_t, \\
\mathbf{E}[L(t,\widetilde{Y}_t)]\leq 0\leq \mathbf{E}[R(t,\widetilde{Y}_t)],\\ 
\widetilde{K}_t=\widetilde{K}^{+}_t-\widetilde{K}^{-}_t,\ \widetilde{K}^+,\widetilde{K}^{-} \in I[0,T],\\
\int_0^T \mathbf{E}[R(t,\widetilde{Y}_t)]d\widetilde{K}_t^+=\int_0^T \mathbf{E}[L(t,\widetilde{Y}_t)]d\widetilde{K}^-_t=0.
\end{cases}
\end{equation}

According to Theorem 3.5 in \cite{LS}, the doubly mean reflected MFBSDE \eqref{aumdrmfbsdewr} admits  a unique solution $(\widetilde{Y},\widetilde{Z},\widetilde{K})\in \mathcal{S}^2 \times \mathcal{H}^2 \times BV[0,T]$. Suppose additionally that the loss functions $L,R\in C^{1,2}_b([0,T]\times\mathbb{R})$ and 
\begin{align}\label{bdd of tilde Z}
    \sup_{t\in[0,T]}\E[|\widetilde{Z}_t|^2]<\infty.
\end{align}
According to Proposition \ref{Klip},  there exist three measurable functions  $\widetilde{k},\widetilde{k}^+,\widetilde{k}^-$ with $\widetilde{k}^+,\widetilde{k}^-\geq0$, such that
\begin{align*}
\widetilde{K}_t=\int_{0}^{t}\widetilde{k}_s ds, \ \widetilde{K}^+_t=\int_{0}^{t}\widetilde{k}^+_s ds, \ \widetilde{K}^-_t=\int_{0}^{t}\widetilde{k}^-_s ds.
\end{align*}
Thus, we rewrite Eq. \eqref{aumdrmfbsdewr} as follows:
\begin{equation}
\begin{cases}
\widetilde{Y}_t=\xi+\int_t^T \left(f(s,\widetilde{Y}_s,\mathbf{E}[\widetilde{Y}_s],\widetilde{Z}_s,\mathbf{E}[\widetilde{Z}_s],0)+\widetilde{k}_s\right)ds-\int_t^T\widetilde{Z}_s dB_s, \\
\mathbf{E}[L(t,\widetilde{Y}_t)]\leq 0\leq \mathbf{E}[R(t,\widetilde{Y}_t)],\\ 
{\widetilde{k}_t=\widetilde{k}^+_t-\widetilde{k}^-_t, \ \widetilde{k}^+,\widetilde{k}^-\geq 0,} \\
\int_0^T \mathbf{E}[R(t,\widetilde{Y}_t)]\widetilde{k}^+_tdt=\int_0^T \mathbf{E}[L(t,\widetilde{Y}_t)]\widetilde{k}^-_tdt=0.
\end{cases}
\end{equation}

\begin{theorem}
Let Assumptions \ref{asslip2}, \ref{assLRbounded} and \eqref{bdd of tilde Z} hold. Moreover, suppose that for any $s\in[0,T]$, each of the following two equations admits a unique deterministic nonnegative solution
\begin{equation}\label{k+k-}
    \begin{split}
        &f(s,\widetilde{Y}_s,\mathbf{E}[\widetilde{Y}_s],\widetilde{Z}_s,\mathbf{E}[\widetilde{Z}_s],k)-f(s,\widetilde{Y}_s,\mathbf{E}[\widetilde{Y}_s],\widetilde{Z}_s,\mathbf{E}[\widetilde{Z}_s],0)+k=\widetilde{k}_s^+,\\
        &f(s,\widetilde{Y}_s,\mathbf{E}[\widetilde{Y}_s],\widetilde{Z}_s,\mathbf{E}[\widetilde{Z}_s],-k)-f(s,\widetilde{Y}_s,\mathbf{E}[\widetilde{Y}_s],\widetilde{Z}_s,\mathbf{E}[\widetilde{Z}_s],0)-k=-\widetilde{k}_s^-.
    \end{split}
\end{equation}
Then the mean doubly reflected MFBSDE \eqref{mfbsdefk} with nonlinear resistance has at least one solution. 
\end{theorem}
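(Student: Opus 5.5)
We construct the solution of \eqref{mfbsdefk} explicitly from the solution $(\widetilde{Y},\widetilde{Z},\widetilde{K})$ of the auxiliary equation \eqref{aumdrmfbsdewr}. By Theorem 3.5 in \cite{LS}, \eqref{bdd of tilde Z} and Proposition \ref{Klip}, $\widetilde{K}$ has a density $\widetilde{k}=\widetilde{k}^+-\widetilde{k}^-$. We may take $\widetilde{k}^+_s\widetilde{k}^-_s=0$ for a.e. $s$, since otherwise we replace both by $(\widetilde{k}^\pm-\widetilde{k}^+\wedge\widetilde{k}^-)$. This replacement keeps the Skorokhod conditions, because it only shrinks the measures $d\widetilde{K}^\pm$.

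For each $s$, let $k^+_s\geq0$ and $k^-_s\geq0$ be the unique deterministic nonnegative solutions of the first and second equations in \eqref{k+k-}, respectively. If $\widetilde{k}^+_s=0$, uniqueness forces $k^+_s=0$, since $k=0$ already solves the first equation; likewise $\widetilde{k}^-_s=0$ forces $k^-_s=0$. Hence $k^+_sk^-_s=0$. Set $k_s:=k^+_s-k^-_s$, and define
\[
(Y,Z):=(\widetilde{Y},\widetilde{Z}).
\]

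We now check the first equation of \eqref{mfbsdefk} pointwise in $s$. On $\{\widetilde{k}^+_s>0\}$ we have $\widetilde{k}^-_s=0$, $k^-_s=0$ and $k_s=k^+_s$. The first relation in \eqref{k+k-} then gives
\[
f(s,\widetilde{Y}_s,\E[\widetilde{Y}_s],\widetilde{Z}_s,\E[\widetilde{Z}_s],k_s)+k_s=f(s,\widetilde{Y}_s,\E[\widetilde{Y}_s],\widetilde{Z}_s,\E[\widetilde{Z}_s],0)+\widetilde{k}_s .
\]
The case $\{\widetilde{k}^-_s>0\}$ is symmetric, using the second relation with $k_s=-k^-_s$. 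On the set where both vanish, $k_s=0$ and the identity is trivial. Hence the driver with resistance $k$ equals the driver of the rewritten auxiliary equation, and the dynamics of \eqref{mfbsdefk} are exactly those of $\widetilde{Y}$.

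The constraint $\E[L(t,Y_t)]\leq0\leq\E[R(t,Y_t)]$ is inherited directly. For the Skorokhod conditions, note that $k^+_t>0$ only where $\widetilde{k}^+_t>0$. Since $\int_0^T\E[R(t,\widetilde{Y}_t)]\widetilde{k}^+_tdt=0$ with a nonnegative integrand, we get $\E[R(t,\widetilde{Y}_t)]=0$ for a.e. $t$ in that set. Therefore $\int_0^T\E[R(t,Y_t)]k^+_tdt=0$, and the argument for $k^-$ and $L$ is the same.

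The main obstacle is technical: measurability and integrability of $k$, needed so that $\int_0^t k_sds$ makes sense and $(Y,Z,K)$ lies in the right spaces. Because the right-hand sides in \eqref{k+k-} are deterministic while the left-hand sides involve random $\widetilde{Y}_s,\widetilde{Z}_s$, the hypothesis that a deterministic solution exists is already a strong structural assumption. I would handle this as follows:
\begin{itemize}
\item[(i)] Measurability: obtain $s\mapsto k^\pm_s$ via a measurable selection argument, or as the inverse of a continuous map in $k$.
\item[(ii)] Integrability: use Assumption \ref{ass1ipterminalp}(ii), the Lipschitz property in $k$, which together with \eqref{k+k-} gives $|k^\pm_s|\leq \widetilde{k}^\pm_s+\lambda|k^\pm_s|$. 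This yields a bound only if $\lambda<1$. In general I would instead argue that $k^\pm$ is bounded, since $\widetilde{k}$ is bounded by the Lipschitz continuity of $\widetilde{K}$.
\end{itemize}
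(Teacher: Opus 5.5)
Your construction and verification are correct and coincide with the paper's proof. You take $(Y,Z)=(\widetilde Y,\widetilde Z)$ and $k=k^+-k^-$ with $k^\pm_s$ the solutions of \eqref{k+k-}, and note that the driver $f(s,\widetilde Y_s,\E[\widetilde Y_s],\widetilde Z_s,\E[\widetilde Z_s],k_s)+k_s$ then equals $f(s,\widetilde Y_s,\E[\widetilde Y_s],\widetilde Z_s,\E[\widetilde Z_s],0)+\widetilde k_s$. You then get the flat-off conditions from $k^\pm_s>0\Rightarrow\widetilde k^\pm_s>0$. Your case check is more explicit than the paper's. Your normalization $\widetilde k^+\widetilde k^-=0$ changes nothing: $\inf(R-L)>0$ makes the contact sets $\{\E[R(t,\widetilde Y_t)]=0\}$ and $\{\E[L(t,\widetilde Y_t)]=0\}$ disjoint, which is what the paper's partition into $R_t,L_t,A_t$ encodes.

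The only incorrect claim is in your last bullet. Boundedness of $\widetilde k$ does not bound $k^\pm$ without some coercivity of $k\mapsto f(\cdot,k)-f(\cdot,0)+k$. For example, if this map equals $k/(1+k)$ on $[0,\infty)$, which only needs $f$ to be $1$-Lipschitz in $k$, then $k^+_s=\widetilde k^+_s/(1-\widetilde k^+_s)$ is unbounded whenever $\sup_s\widetilde k^+_s=1$. No such bound is needed for the statement as the paper poses it. The combined driver $f(\cdot,k_s)+k_s=f(\cdot,0)+\widetilde k_s$ is integrable anyway, and the Skorokhod integrands are sign-definite and vanish a.e. Integrability of $k$ on its own would require an extra condition like those in the subsequent Remark, which give $k^\pm\le\widetilde k^\pm$ or $k^\pm\le\widetilde k^\pm/(1-C_k)$.
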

\begin{proof}
Let $R_t=\{\E[R(t,Y_t)]=0\}$, $L_t=\{\E[L(t,Y_t)]=0\}$ and $A_t=\{\E[L(t,Y_t)]<0<\E[R(t,Y_t)]\}$. It is easy to check that for any $t$, $\mathbf{1}_{R_t}+\mathbf{1}_{L_t}+\mathbf{1}_{A_t}=1$ and 
\begin{align*}
    k_t=\begin{cases}
        k^+_t, & t\in R_t,\\
        0, &t\in A_t,\\
        -k^-_t, &t\in L_t.
    \end{cases}
\end{align*}
Therefore, we may rewrite the first equation to mean doubly reflected MFBSDE \eqref{mfbsdefk} with nonlinear resistance as follows:
    \begin{equation}
        \begin{split}
            Y_t&=\xi+\int_{t}^{T}\left(f(s,Y_s,\mathbf{E}[Y_s],Z_s,\mathbf{E}[Z_s],k_s)+k_s\right) ds-\int_{t}^{T}Z_sdB_s\\
           &=\xi+\int_{t}^{T}f(s,Y_s,\mathbf{E}[Y_s],Z_s,\mathbf{E}[Z_s],0) ds-\int_{t}^{T}Z_sdB_s\\
           &\quad +\int_{t}^{T}\mathbf{1}_{R_t}\left(f(s,Y_s,\mathbf{E}[Y_s],Z_s,\mathbf{E}[Z_s],k^+_s)-f(s,Y_s,\mathbf{E}[Y_s],Z_s,\mathbf{E}[Z_s],0)+k_s^+\right)ds\\
            &\quad +\int_{t}^{T}\mathbf{1}_{L_t}\left(f(s,Y_s,\mathbf{E}[Y_s],Z_s,\mathbf{E}[Z_s],{-k^-_s})-f(s,Y_s,\mathbf{E}[Y_s],Z_s,\mathbf{E}[Z_s],0)-k_s^-\right)ds.
        \end{split}
    \end{equation} 
For any $s\in[0,T]$, let $Y_s=\widetilde{Y}_s, Z_s=\widetilde{Z}_s$ and $k_s=k^+_s-k^-_s$, where $k^+_s,k^-_s$ are the solutions to equations in \eqref{k+k-}, respectively. We claim that $(Y,Z,k)$ is a solution to \eqref{mfbsdefk}. It remains to prove that $k^+,k^-$ satisfy the flat-off condition. In fact, for any $s\in [0,T]$, it is easy to check that $k^+_s>0$ (resp., $k^-_s>0$) if and only if $\widetilde{k}^+_s>0$ (resp., $\widetilde{k}^-_s>0$). Therefore, the flat-off conditions hold. 
\end{proof}
\begin{remark}
  Suppose that $f$ satisfying Assumption \ref{asslip2} has the following representation 
  \begin{align}\label{repre of f}f(t,\omega,y,y',z,z',k)=a(t,\omega,y,y',z,z')+b(t,y',z',k).
  \end{align}
  For each $t\in[0,T]$, set $\bar{b}_t(k)=b(t,\mathbf{E}[\widetilde{Y}_t],\mathbf{E}[\widetilde{Z}_t],k)$. For simplicity, we omit $t$ in the subscript. Then, equations in  \eqref{k+k-} are solvable if one of the following  conditions holds:
\begin{itemize}
    \item [(i)] $\bar{b}$ is increasing in $k$;
    \item[(ii)] there exists a constant $C_k<1$, such that $\left|\bar{b}(k^1)-\bar{b}(k^2)\right|\leq C_k\left|k^1-k^2\right|$;
\end{itemize} 

For $k\geq 0$, we set $ \widetilde{b}(k)=\bar{b}(k)-\bar{b}(0)+k$ and $\widehat{b}(k)=\bar{b}(-k)-\bar{b}(0)-k$. Given a constant $c>0$, it remains to show that each of the following equations $\widetilde{b}(k)=c$ and $\widehat{b}(k)=-c$ admits a strictly positive solution. We only prove this fact for $\widetilde{b}$. It suffices to show that the mapping $\widetilde{b}:[0,\infty)\rightarrow[0,\infty)$ is a one-to-one correspondence. First, $\widetilde{b}(0)=0$ clearly holds for all cases. 

  Suppose that (i) holds. Then, it is easy to check that $\widetilde{b}$ is strictly increasing and $\lim_{k\rightarrow\infty}\widetilde{b}(k)=\infty$. 

Suppose that (ii) holds. For $0\leq k < k'$, we have 
\begin{equation*}
    \begin{split}
       \widetilde{b}(k')-\widetilde{b}(k)&= \bar{b}(k')-\bar{b}(k)+k'-k\\
   &\geq -C_k\left|k'-k\right|+k'-k\\
   &=(1-C_k)(k'-k)> 0. 
    \end{split}
\end{equation*}
Then $\widetilde{b}$ is strictly increasing in $k$ and $\lim_{k\rightarrow\infty}\widetilde{b}(k)=\infty$.


\end{remark}

For uniqueness, we only deal with the case where the loss functions are linear. More precisely, consider the following doubly mean reflected MFBSDE with resistance:
\begin{equation}\label{drmfbsde}
\begin{cases}
Y_t=\xi+\int_t^T \left(f(s,Y_s,\mathbf{E}[Y_s],Z_s,\mathbf{E}[Z_s],k_s)+k_s \right)ds-\int_t^T Z_s dB_s, \\
l_t\leq \mathbf{E}[Y_t]\leq r_t, \ t\in[0,T],\\ 
{k_t=k^+_t-k^-_t, \ t\in[0,T] \textrm{ and }k^+,k^-\geq 0,}\\
\int_0^T (\mathbf{E}[Y_t]-l_t)k_t^{+}dt=\int_0^T (r_t-\mathbf{E}[Y_t])k_t^{-}dt=0.
\end{cases}
\end{equation}

\begin{proposition}\label{prounik}
Let Assumption \ref{asslip2} hold. Moreover, suppose that $f$ has representation \eqref{repre of f} and it satisfies $\frac{df}{dk}\geq-1$. 
   Then the doubly mean reflected MFBSDE \eqref{drmfbsde} with resistance admits at most one unique solution.
\end{proposition}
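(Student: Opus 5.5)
The plan is to reduce the resistance equation \eqref{drmfbsde} to a doubly mean reflected MFBSDE \emph{without} resistance, whose uniqueness is already known from Theorem 3.5 in \cite{LS}. The structural assumption \eqref{repre of f} is what makes this possible: the $k$-dependence of $f$ enters only through the deterministic part $b(t,y',z',k)$. Since $y'=\mathbf{E}[Y_t]$ and $z'=\mathbf{E}[Z_t]$ are deterministic, the "effective reflection" generated by $k$ is deterministic as well.

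Let $(Y^i,Z^i,k^i)$, $i=1,2$, be two solutions of \eqref{drmfbsde}. For each $s$ define
$$g^i_s(k):=b(s,\mathbf{E}[Y^i_s],\mathbf{E}[Z^i_s],k)-b(s,\mathbf{E}[Y^i_s],\mathbf{E}[Z^i_s],0)+k.$$
Then set $K^i_t:=\int_0^t g^i_s(k^i_s)\,ds$, which is a deterministic continuous function of bounded variation. With this notation the first line of \eqref{drmfbsde} reads
$$Y^i_t=\xi+\int_t^T f(s,Y^i_s,\mathbf{E}[Y^i_s],Z^i_s,\mathbf{E}[Z^i_s],0)\,ds-\int_t^T Z^i_s\,dB_s+K^i_T-K^i_t .$$
The hypothesis $\frac{df}{dk}\ge -1$ means $\partial_k b\ge -1$, so $k\mapsto g^i_s(k)$ is nondecreasing with $g^i_s(0)=0$. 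Hence $g^i_s(k)>0$ forces $k>0$, and $g^i_s(k)<0$ forces $k<0$.

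Now decompose $K^i=K^{i,+}-K^{i,-}$ with $K^{i,\pm}_t=\int_0^t (g^i_s(k^i_s))^{\pm}ds$. On the set where $(g^i_s(k^i_s))^+>0$ we have $k^{i,+}_s>0$, so by the flat-off condition in \eqref{drmfbsde} we get $\mathbf{E}[Y^i_s]=l_s$. The symmetric statement holds for the negative parts. Therefore $(Y^i,Z^i,K^i)$ solves the doubly mean reflected MFBSDE with generator $f(\cdot,0)$ and linear loss functions $L(t,x)=x-l_t$, $R(t,x)=r_t-x$ (after matching sign conventions). These loss functions satisfy Assumption \ref{assLR} provided $\inf_t(r_t-l_t)>0$. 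Uniqueness from Theorem 3.5 in \cite{LS} then gives $Y^1=Y^2$, $Z^1=Z^2$, and $K^1=K^2$. Consequently $g^1=g^2=:g$ and $g_s(k^1_s)=g_s(k^2_s)$ for a.e. $s$.

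The main obstacle is the last step: recovering $k^1=k^2$ from $g_s(k^1_s)=g_s(k^2_s)$, since $g_s$ is only nondecreasing. I would first use the flat-off conditions together with $l<r$. At a time $s$, $k^1_s>0$ and $k^2_s<0$ cannot hold simultaneously, because that would require $\mathbf{E}[Y_s]=l_s$ and $\mathbf{E}[Y_s]=r_s$. So $k^1_s$ and $k^2_s$ lie in the same closed half-line. When $\frac{df}{dk}>-1$, $g_s$ is strictly increasing and injective, and $k^1=k^2$ follows at once. If only $\ge -1$ is available, a flat piece of $g_s$ could make $k$ non-unique. In that case I would aim to show that uniqueness holds for $(Y,Z)$ and for the effective compensator $K$, and identify $k$ up to the level sets of $g_s$. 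Otherwise I would strengthen the hypothesis to strict monotonicity, which I expect the intended argument implicitly uses.
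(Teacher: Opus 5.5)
Your hesitation at the last step is justified, and it is not a weakness of your method. Under $\frac{df}{dk}\ge -1$ alone, the $k$-component of \eqref{drmfbsde} is genuinely not unique. Take $\xi=0$, $a\equiv 0$, $b(t,y',z',k)=-k$ (so $\frac{df}{dk}\equiv -1$), $l\equiv 0$ and $r\equiv 1$. The driver $f+k$ then vanishes identically, so $(Y,Z)=(0,0)$. Since $\mathbf{E}[Y_t]=l_t$ for all $t$, every integrable $k=k^+\ge 0$ with $k^-=0$ satisfies both flat-off conditions; for example, $k\equiv 0$ and $k\equiv 1$ are two different solutions. The paper's proof has exactly the hole you located. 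Its It\^o--Gronwall estimate gives $Y^1=Y^2$ and $Z^1=Z^2$, and it then simply asserts $k^1=k^2$. But the equation only yields $b(s,\mathbf{E}[Y_s],\mathbf{E}[Z_s],k^1_s)+k^1_s=b(s,\mathbf{E}[Y_s],\mathbf{E}[Z_s],k^2_s)+k^2_s$ a.e., which is your identity $g_s(k^1_s)=g_s(k^2_s)$. What is actually true is this: under $\frac{df}{dk}\ge -1$, $(Y,Z)$ and the effective compensator $\int_0^{\cdot} g_s(k_s)\,ds$ are unique, and $k$ is unique once $k\mapsto b(t,y',z',k)+k$ is injective (e.g.\ $\frac{df}{dk}>-1$). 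Your proposed strengthening is the right repair.

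Up to that point, your route differs from the paper's. The paper's argument runs as follows:
\begin{itemize}
\item It applies It\^o's formula to $|\Delta Y|^2$.
\item It splits $\Delta f+\Delta k$ into a Lipschitz part plus $(1+\alpha_s)\Delta k_s$, where $\alpha_s$ is a difference quotient of $b$ in $k$, deterministic by \eqref{repre of f}, with $1+\alpha_s\ge 0$.
\item It uses the pointwise flat-off relations to get $(1+\alpha_s)\mathbf{E}[\Delta Y_s]\Delta k_s\le 0$, and closes with Gronwall.
\end{itemize}
You instead absorb the resistance into the deterministic compensator $K^i=\int_0^{\cdot} g^i_s(k^i_s)\,ds$. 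You then transfer the flat-off conditions through the monotonicity of $g^i_s$ and quote uniqueness from Theorem 3.5 of \cite{LS}.

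The paper's estimate is self-contained and uses nothing about the barriers beyond $l\le r$. Your reduction needs $l,r$ continuous with $\inf_t(r_t-l_t)>0$, so that the linear loss functions satisfy Assumption \ref{assLR}; the proposition does not assume this. In exchange, your transfer step never uses linearity of the constraints. Under the same hypotheses on $f$, the argument therefore also gives uniqueness of $(Y,Z)$ and of the effective compensator for \eqref{mfbsdefk} with nonlinear loss functions satisfying Assumption \ref{assLR}, a case the paper explicitly leaves aside.

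One correction: in the paper's convention $K=K^R-K^L$, and $K^R$ charges only $\{t:\mathbf{E}[R(t,Y_t)]=0\}$. So you need $R(t,x)=x-l_t$ and $L(t,x)=x-r_t$. Your choice $R(t,x)=r_t-x$ is decreasing and violates Assumption \ref{assLR}(3).
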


\begin{proof}
    Suppose that $(Y^1,Z^1,k^1)$ and $(Y^2,Z^2,k^2)$ are two solutions to doubly mean reflected MFBSDE  \eqref{drmfbsde} with resistance. In order to simplify notations, denote
    $$\Delta K:=K^1-K^2,  \  f^{i,j}_s:=f(s,Y_s^i,\mathbf{E}[Y_s^i],Z_s^i,\mathbf{E}[Z_s^i],k_s^j),\text{ where }\ K= Y,\ Z,\ k,\ f;\ i,j=1,2$$
    and 
    \begin{equation*}
     \begin{split}
         \alpha^{1,2}_s:=\frac{f^{2,1}_s-f^{2,2}_s}{\Delta k_s} I_{\{\Delta k_s\neq 0\}}. 
         \end{split}   
    \end{equation*}
    
    Applying It\^o formula to $\left|\Delta Y \right|^2$ and taking expectation, we obtain 
    \begin{equation}
    \begin{split}
      & \mathbf{E}\left[\left|\Delta Y_t\right|^2\right]+\mathbf{E}\left[\int_{t}^{T}\left|\Delta Z_s\right|^2ds\right]\\
      =&2\mathbf{E}\left[\int_{t}^{T}\Delta Y_s \Delta f_s ds\right] +2\mathbf{E}\left[\int_{t}^{T}\Delta Y_s \Delta k_s ds\right]\\
      =&2\mathbf{E}\left[\int_{t}^{T}\Delta Y_s(f_s^{1,2}-f_{s}^{2,2})ds\right]+2\mathbf{E}\left[\int_{t}^{T}(1+\alpha^{1,2}_s)\Delta Y_s \Delta k_s ds\right]\\
      \leq & 2\lambda\mathbf{E}\left[\int_{t}^{T}\Delta Y_s(\left|\Delta Y_s \right|+\mathbf{E}[\left|\Delta Y_s\right|])ds\right]+2\lambda\mathbf{E}\left[\int_{t}^{T}\Delta Y_s(\left|\Delta Z_s\right|+\mathbf{E}[\left|\Delta Z_s\right|])ds\right]\\
      &\quad +\left[2\mathbf{E}\int_{t}^{T}(1+\alpha^{1,2}_s)\Delta Y_s \Delta k_s ds\right].\\
    \end{split}
    \end{equation}
    By the assumption $f$, $\alpha^{1,2}$ is a deterministic function and we have $1+\alpha^{1,2}_s\geq 0$. By the Skorokhod conditions, we have $k^{i,+}_s(\mathbf{E}[Y^i_s]-l_s)=0, \ a.e.$ and $k^{i,-}_s(r_s-\mathbf{E}[Y^i_s])=0, \ a.e.$, $i=1,2$.   Therefore,
  \begin{equation*}
      \begin{split}
          &\E\left[\int_t^T(1+\alpha_s^{1,2})\Delta Y_s \Delta k_s ds\right] =\int_t^T(1+\alpha_s^{1,2})\E\left[\Delta Y_s\right] \Delta k_sds\\
          =&\int_{t}^{T}(1+\alpha_s^{1,2})(k_s^{1,+}-k_s^{2,+})(\mathbf{E}[Y^1_s]-l_s+l_s-\mathbf{E}[Y^2_s])ds \\
           &+\int_{t}^{T}(1+\alpha_s^{1,2})(k_s^{2,-}-k_s^{1,-})(\mathbf{E}[Y^2_s]-r_s+r_s-\mathbf{E}[Y^1_s])ds \\
           =&-\int_{t}^{T}(1+\alpha_s^{1,2})(\mathbf{E}[Y^2_t]-l_s)k_s^{1,+}ds-\int_{t}^{T}(1+\alpha_s^{1,2})(\mathbf{E}[Y^2_t]-l_s)k_s^{2,+}ds\\
           &-\int_{t}^{T}(1+\alpha_s^{1,2})(r_s-\mathbf{E}[Y^2_t])k_s^{1,-}ds-\int_{t}^{T}(1+\alpha_s^{1,2})(r_s-\mathbf{E}[Y^2_t])k_s^{2,-}ds\leq 0.
      \end{split}
  \end{equation*}
   All the above analysis indicates that 
 \begin{equation}
    \begin{split}
       \mathbf{E}\left[\left|\Delta Y_t\right|^2\right]+\mathbf{E}\left[\int_{t}^{T}\left|\Delta Z_s\right|^2ds\right]
      \leq  (4\lambda+4\lambda^2)\mathbf{E}\left[\int_{t}^{T}\left|\Delta Y_s \right|^2 ds\right]+\mathbf{E}\left[\int_{t}^{T}\left|\Delta Z_s\right|^2ds\right].
    \end{split}
    \end{equation}
It follows that 
\begin{equation*}  
\mathbf{E}\left[|\Delta Y_t|^2\right]\leq (4\lambda+4\lambda^2)\int_{t}^{T}\mathbf{E}\left[|\Delta Y_t|^2\right]ds.
\end{equation*}
By Gronwall inequality, $Y^1_t=Y^2_t$, on $[0,T]$. Applying It\^{o}'s formula to $(Y^1_t-Y^2_t)^2$, we obtain that $Z^1_t=Z^2_t$ on $t\in[0,T]$ and finally, we have $k^1_t=k^2_t$ on $[0,T]$.
\end{proof}

\section*{Acknowledgement}
    This work was supported  by the National Natural Science Foundation of China (No. 12301178), the Natural Science Foundation of Shandong Province for Excellent Young Scientists Fund Program (Overseas) (No. 2023HWYQ-049),  the Natural Science Foundation of Shandong Province (No. ZR2023ZD35),  the Fundamental Research Funds for the Central Universities and  the Qilu Young Scholars Program of Shandong University. 

\end{document}